\def\F{{\mathcal F}} 
\def\N{{\mathbb N}} 
\def\R{{\mathbb R}} 
\def\T{{\mathbb T}} 
\def\Z{{\mathbb Z}} 
\def\blue{\textcolor{blue}}
\newtheorem{theorem}{Theorem}[section]
\newtheorem{assumption}[theorem]{Assumption}
\newtheorem{corollary}{Corollary}[theorem]
\newtheorem{lemma}[theorem]{Lemma}
\newtheorem{definition}[theorem]{Definition}
\newtheorem{proposition}[theorem]{Proposition}
\newtheorem{remark}[theorem]{Remark}
\def\ds{\displaystyle}
\def\grad{\nabla}
\def\E{\mathbf{E}}
\def\P{{\mathbb P}} 
\def\curl{{\mathrm curl}}
\def\Div{{\mathrm div}}
\def\supp{\mathrm{supp}}
\title{Necessary and Sufficient Conditions for Kolmogorov's Flux Laws on $\T^2$ and $\T^3$.}
\author{Ethan Dudley\footnotemark[2]}
\begin{document}
\maketitle
\renewcommand{\thefootnote}{\fnsymbol{footnote}}
\footnotetext[2]{ Mathematics Department, University of Maryland College Park, Maryland, United States}
\begin{abstract}
    Necessary and sufficient conditions for the third order Kolmogorov universal scaling flux laws are derived for the stochastically forced incompressible Navier-Stokes equations on the torus in 2d and 3d. This paper rigorously generalizes the result of \cite{bedrossian2019sufficient} to functions which are heavy-tailed in Fourier space or have local finite time singularities in the inviscid limit. In other words we have rigorously derived the well known physical relationship the direct cascade is a local process and is non-trivial if and only if energy moves toward the small scales or singularities have occurred. Similarly, an inverse cascade occurs if and only if energy moves towards the $k = 0$ Fourier mode in the invisicid limit. 
\end{abstract}
\tableofcontents

\section{Introduction}\label{sec: intro}
In this paper, we consider the stochastically forced incompressible Navier-Stokes equations on the Torus $\T_\lambda^d = [0,\lambda)^d$ for $d = 2,3$:
\begin{align}\label{eq: incompressible NSE}
    du^\nu + (u^\nu \cdot \grad u^\nu + \grad p^\nu)\;dt &= \nu \Delta u^\nu\;dt + \sum_{j=1}^\infty f_j\;dW_t^j\\
    \grad \cdot u^\nu &= 0\nonumber
\end{align}
Here $\nu$ is the kinematic viscosity and represents the inverse Reynolds number, $u^\nu$ is the viscous velocity, and $p^\nu$ is the associated pressure. The forcing is assumed to be white-in-time and coloured-in-space Gaussian process represented by 
\[
    \sum_{j=1}^\infty f_j\;dW_t^j
\]
where $\{W_t^j\}$ are a family of independent one-dimensional Brownian motions supported on a common canonical filtered probability space denoted as $(\Omega, \F, (\F_t), \P)$, and the $\{f_j\}$ are smooth, divergence free vector fields on $\T_\lambda^d$ satisfying the colouring condition
\[
    \frac{1}{2}\sum_{j=1}^\infty \|f_j\|_{L^2}^2 = \varepsilon < \infty.
\]

Weak solutions to Equation \eqref{eq: incompressible NSE} are called \textit{martingale solutions} and are the probabilistic analog of Leray-Hopf solutions. The existence of such solutions was first proven in \cite{bensoussan1973equations}. See Section \ref{sec: prelim martingale sols} for their definition and further information. As these solutions are random variables, they exhibit random fluctuations in the flow and a form of stochastic roughness. Furthermore, the Lagrangian trajectories are chaotic due to Stochastic Navier Stokes Equations possessing a positive Lyapunov exponent \cite{bedrossian2022lagrangian}. Solutions do not conserve kinetic energy along particular trajectories (i.e. in a deterministic sense), however, due to the martingale structure of the solutions, kinetic energy is conserved on average and for statistically stationary solutions there is a lack of dependence on the initial conditions. These kinds of features are all typically found in turbulent flows \cite{frisch1995turbulence}, which is why Equation \eqref{eq: incompressible NSE} has been studied as a model for fully developed turbulence in multiple situations. See \cites{inoue1979new, paret1997experimental, mattingly1998stochastic, bedrossian2019sufficient, boffetta2010evidence} and the references within for examples.\newline 

Of interest in this paper is the dynamics of energy in the invisicid limit. In 1922, Richardson \cite{richardson1922weather} conjectured that kinetic energy is not conserved in the invisicid limit and that there is a cascade of energy from the large scales where it is injected into the system to small scales where it may be dissipated by the viscosity. Kolmogorov provided a heuristic argument for the rate at which energy is transferred to the small scales for statistically stationary, isotropic flows in a series of papers in 1941 now known as K41 theory \cites{kolmogorov1941local, kolmogorov1941dissipation, kolmogorov1941degeneration}. Since then his theory has been extended to 2D flows by Batchelor \cite{batchelor1969computation}, Kraichnan \cite{kraichnan1967inertial}, and Fj{\o}rtoft \cite{fjortoft1953changes} and numerous experimental and simulation studies have agreed with their conjectures. Onsanger \cite{onsager1949statistical} expounded upon the ideas of Kolmogorov to provide an upper bound on the regularity of the velocity for which such an energy cascade can exist (See Section \ref{sec: 3d anomalous dissipation} for more details) \newline 

Within the mathematical literature, most of the attention surrounding the energy dynamics has been related to Onsanger's conjecture (see \cites{constantin1994onsager, eyink1994energy, duchon2000inertial, drivas2018onsager, drivas2022self} and the references within for a sample). Only relatively recently have mathematicians began to rigorously cement K41 theory in mathematical analysis. In the deterministic setting, in a landmark paper, Duchon and Robert\cite{duchon2000inertial} found a connection between Kolmogorov's 4/5 law (see Section \ref{sec: intro 3d laws} for details) and possible local singularities in the velocity. Similar results were found for passive scalar turbulent flux laws in \cites{warhaft2000passive, bedrossian2022batchelor, shraiman2000scalar}. In the stochastic setting, Bedrossian et. al. provided sufficient conditions for Kolmogorov's 4/3 and 4/5 laws on $\T^3$ for continuous (in space) solutions \cite{bedrossian2019sufficient} and in a later paper they provide similar sufficient conditions for the third order flux laws on $\T^2$ when there is a friction term to collect energy at large scales \cite{bedrossian2020sufficient}. However, their analysis only proves existence in the case that \textit{all} of the energy is dissipated at the smallest length scales and when the solutions are continuous in space. However, recent studies on thin flows shows the existence of split cascades \cites{musacchio2017split, GOLDBURG1997340}, where some energy moves towards the smallest scales to be dissipated and the rest moves towards scales near the size of the system. Similarly in 2D soap films, energy will move to the small scales as part of the enstrophy while the rest will move to large scales to create eddies \cites{rutgers1998forced, kraichnan1967inertial}. For these reasons, the author provides a rigorous mathematical characterization of Kolmogorov's third order flux laws to classify them even when energy remains at medium and large length scales. 

\subsection{3D Incompressible Flows}\label{sec: intro 3d big header}
Since the dynamics of the problem are slightly different when working in 2D vs 3D, we will examine them seperately. We begin with the 3D case as this has been historically the case due to it being slightly easier to analyze.

\subsubsection{Kolmogorov's Flux Laws}\label{sec: intro 3d laws}
In the 1930s and 40s, the physical literature on turbulence was revolutionized by Taylor, Richardson, Kolmogorov, and Obukhov to name a few. In order to experimentally measure the fluctuation in the velocity, Taylor considered the difference in velocity at two difference points: $\delta_hu^\nu(x) = u^\nu(x+h) - u^\nu(x)$ for all $h \in \R^3$, and averaged together multiple of these differences together \cite{frisch1995turbulence}. These combinations are refered to as structure functions. Using three axioms about the velocity field: 
\begin{enumerate}
    \item the flow is homogeneous and isotropic away from the boundary i.e. $\delta_hu^\nu$ is invariant under translations and rotations away from the boundary (in the case of a bounded domain)  
    \item the flow has mono-fractal scaling, i.e. the system is self similar with a single fractal scaling exponent $s \in \R$ such that $\delta_{\lambda \ell} u^\nu = \lambda^s\delta_\ell u^\nu$ for all $\lambda > 0$ and the increments $\lambda, \lambda \ell \ll \ell_{inj}$ where $\ell_{inj}$ is the scale of energy injection \cite{schaffner2015multifractal}
    \item there is an finite non-zero mean rate of dissipation $\varepsilon$ per unit mass independent of $\nu$\label{assumption: Kolmogorov AD}
\end{enumerate}
Kolmogorov provided a heuristic argument that there is an inertial range $\ell_\nu \ll \ell \ll \ell_{inj}$ such that
\[
    S_{vel, p}^\parallel(\ell) := \langle (\delta_{\ell n}u^\nu \cdot n)^p\rangle \sim C_p\varepsilon^{p/3}\ell^{p/3}
\]
Here $C_p$ is a fixed diffusion constant depending on the dimension and the power of $p$ (and independent of $\nu$), $\langle \cdot \rangle$ is some combination of ensemble, spatial, rotational, and temporal averaging and $S_{vel,p}^\parallel$ is the $p$th order universal longitudinal velocity structure function. Landau famously objected to Kolmogorov's second assumption of mono-fractal scaling due to intermittency effects, i.e. rare events and non-uniformity in the roughness of the velocity field, \cite{frisch1995turbulence}. Indeed, both of the first two assumptions have been found to be generally false due to such intermittency effects \cites{frisch1995turbulence, drivas2022self, bedrossian2019sufficient, jimenez2007intermittency} However such errors can be corrected for by including an intermittency corrector $\tau(p)$ to the exponent on the right hand side of the law. See \cite{jimenez2007intermittency} and the references within for more details regarding intermittency. Nevertheless, experimental results show that the $p = 3$ case correlates exceptionally close to Kolmogorov's prediction to the point that it is considered the only exact law in the physics literature \cites{drivas2022self}. This is because the $p=3$ case doesn't rely on the statistical assumptions of isotropy and mono-fractal scaling, but can be derived directly from the Navier Stokes equations via the Karman-Howarth-Monin (KHM) relations (see Section \ref{sec: prelim KHM eqs} for more details). \newline 

When $p = 3$ Kolmogorov's prediction is known as the $4/5$-law due to its diffusion coefficient: 
\begin{equation}\label{eq: 4/5 law}
    \langle (\delta_{\ell n}u^\nu \cdot n)^3 \rangle \sim -\frac{4}{5}\varepsilon\ell \quad \quad \ell_\nu \ll \ell \ll \ell_{inj}
\end{equation}
where $n$ it a unit vector, and the dissipation scale is given by $\ell_\nu = (\nu^3/\varepsilon)^{1/4}$ is the scale at which dissipation effects begin to dominate the flow. There is also a closely related $4/3$-law originally derived by Yaglom \cite{yaglom1949local} for passive scalars:
\begin{equation}\label{eq: 4/3 law}
    \langle |\delta_{\ell n}u^\nu|^2\delta_{\ell n}u^\nu \cdot n \rangle \sim \frac{-4}{3}\varepsilon\ell \quad \ell_{\nu} \ll \ell \ll \ell_{inj}
\end{equation}
In both instances, the negative sign on the flux indicates that energy is moving from large scales to small scales and one can see that the rate of the flux decreases proportional to the scale. 


\subsubsection{Anomalous Dissipation of Kinetic Energy}\label{sec: 3d anomalous dissipation}
Finally, we consider Kolmogorov's third ``axiom" which is also known as the anomalous dissipation of kinetic energy. Experimental evidence for such behaviour was first observed by Taylor \cite{taylor1935statistical} and can be thought of as a loss at large Reynold's numbers type assumption. The cause of this anomalous behaviour is thought to be due to the nonlinear term $(u^\nu\cdot \grad )u^\nu$, which is accelerating the movement of energy to small scales in such a way that $u^\nu$ losses its derivative in the invisicid limit. \newline 

To phrase this mathematically, first consider the determinstic case:
It is well known that Leray-Hopf solutions to the Navier-Stoke's equation satisfy an energy inequality of the form (see \cite{drivas2022self} for example)
\[
    \|u^\nu(T)\|_{L^2}^2 + \nu\int_0^T\|\grad u^\nu(t)\|_{L^2}^2 \;dt \leq \|u^\nu(0)\|_{L^2}^2 + \sum_{j=1}^\infty\int_0^T\langle f_j, u^\nu(t) \rangle \;dt
\]
Then typically for such deterministic solutions, anomalous dissipation is defined as there exists $\varepsilon > 0$ such that
\[
    \liminf_{\nu \to 0} \nu\int_0^T \|\grad u^\nu(t)\|_{L^2}^2\;dt \geq \varepsilon > 0
\]
Thus anomalous dissipation can be seen as a loss in regularity type assumption where the $L^2(0,T, \dot{H}^1)$ norm of $u^\nu$ must blow up like $1/\nu$. In a landmark paper by Duchon and Robert \cite{duchon2000inertial} it was shown that this inequality may be transformed into an equality if one includes a Radon measure $D(u^\nu)$ which is defined locally in terms of (possible) finite time singularities. Then one may consider a larger class of solutions $u^\nu$ where $\nu\int_0^T \|\grad u^\nu(t)\|_{L^2}^2\;dt \to 0$ but the invisicid limit consists of singularities on a set of positive measure. \newline

Similarly, for statistically stationary martingale solutions one can form the energy balance via Ito's lemma
\begin{align}\label{eq: stoch energy balance}
    \nu\E\fint_0^T\|\grad u^\nu(s)\|_\lambda^2\;ds + \E D(u^\nu) = \frac{1}{2}\sum_j\|f_j\|_\lambda^2 = \varepsilon
\end{align}
See the Appendix for the details for how to derive the energy balance \eqref{eq: stoch energy balance}. Additionally if the forcing has a deterministic component one can establish a similar bound, see \cite{chow2020zeroth} for details. 
We note that while the right hand side of the energy balance is independent of the viscosity, the deterministic regime's definition of anomalous dissipation does not apply here when $u^\nu$ is sufficiently regular so that $\E D(u^\nu) \equiv 0$. This is because there is nothing inherently nonlinear nor anomalous in the statement:
\[
    \nu\E\fint_0^T\|\grad u^\nu(s)\|_\lambda^2\;ds = \varepsilon
\]
as the same energy balance can be achieved by the stochastic Heat equation \cite{bedrossian2019sufficient}. Thus in the stochastic regime we define anomalous dissipation as
\begin{definition}\label{def: anomalous diffusion}
    We say that a sequence $\{u^\nu\}_{\nu \geq 0}$ of stationary martingale solutions satisfies weak anomalous dissipation if either
    \begin{equation}\label{eq: singularities occurs}
        \liminf_{\nu \to 0} \E D(u^\nu) > 0
    \end{equation}
    or there exists $N_\nu \to \infty$ such that
    \begin{equation}\label{eq: energy move to infinity}
        \liminf_{\nu \to 0} \nu\sum_{|k| \geq N_\nu}\E\fint_0^T|\widehat{\grad u^\nu}(k)|^2 > 0
    \end{equation}
\end{definition}
Notice that condition \eqref{eq: energy move to infinity} can be restated as in the invisicid limit, the family of solutions $u^\nu$ moves a sufficient amount of energy to small scales for $u^\nu$ to lose its $H^1$ regularity. 
\begin{remark}\label{remark: bedrossian implies WAD}
    Previously, the authors in \cite{bedrossian2019sufficient} and \cite{papathanasiou2021sufficient} defined a weak anomalous dissipation condition in terms of the Taylor micro-scale
    \[
        \lim_{\nu \to 0}\nu\E\|u^\nu(t)\|_{\lambda}^2 = 0 \quad \forall \;t
    \]
    Such a condition is sufficient to guarantee the existence of a sequence $N_\nu$ such that \eqref{eq: energy move to infinity} holds. For example, if $N_\nu^{-2} = o(\nu\E\|u^\nu\|_{L^2}^2)$  then
    \[
        \limsup_{\nu \to 0} \nu\sum_{|k| \leq N_\nu}\E\fint_0^T|\widehat{\grad u^\nu}(k)|^2 \leq \limsup_{\nu \to 0}N_\nu^2 \nu\sum_{|k| \leq N_\nu}\E|\widehat{u^\nu}|^2 \leq \limsup_{\nu \to 0}N_\nu^2 \nu\E\|u^\nu\|_{L^2}^2 = 0
    \]
    Therefore
    \[
        \liminf_{\nu \to 0} \nu\sum_{|k| \geq N_\nu}\E\fint_0^T|\widehat{\grad u^\nu}(k)|^2 = \varepsilon - \limsup_{\nu \to 0} \nu\sum_{|k| \leq N_\nu}\E\fint_0^T|\widehat{\grad u^\nu}(k)|^2 = \varepsilon > 0
    \]
\end{remark}

It is an interesting question whether the condition in \eqref{eq: singularities occurs} is true for some flows. Sufficient conditions for $\ds\liminf_{\nu \to 0}D(u^\nu) \equiv 0$ have been found in the deterministic case, see \cite{leslie2018conditions} for work in this direction. To the authors no such analysis has been carried out in the stochastic regime, but similar results should hold. Additionally, one could ask under what regularity do solutions satisfy condition \eqref{eq: energy move to infinity} and converge to a solution to the stochastic Euler equation. While this is merely the stochastic version of Onsanger's conjecture, we remark that to the best of our knowledge no such analysis has been done (in the stochastic case) but most likely the answer relies on a translation of the work by Eyink \cite{eyink1994energy} and Constantin et.al. \cite{constantin1994onsager} to the stochastic framework.

\subsection{2D Incompressible Flows}\label{sec: intro 2d laws}
Now we consider the problem's dynamics when restricted to two dimensions. 
In two dimensions one could extend the three dimensional results directly for particularly rough solutions. However, if the solution is nice enough that the initial velocity has finite expected enstrophy: 
\[
    \E\|\omega^\nu(0)\|_\lambda^2 < \infty \quad \omega^\nu(t) = \partial_1u_2^\nu(t) - \partial_2 u_1^\nu(t)
\]
then the 2D dynamics are largely disparate from the 3D dynamics. The main culprit for this difference is the lack of vortex stretching in 2D resulting in a well-behaved vorticity equation.\newline 

Consider the vorticity equation in two dimensions:
\begin{align}\label{eq: vorticity eq}
    d\omega^\nu + (u^\nu \cdot \grad)\omega^\nu\;dt &= \nu\Delta \omega^\nu\;dt + \sum_j \curl f_j\;dW_t^j\\
    u^\nu &= \grad^\perp (-\Delta)^{-1}\omega^\nu\nonumber
\end{align}
Here $\grad^\perp := (-\partial_2, \partial_1)$ and we call the second relation $u^\nu = \grad^\perp (-\Delta)^{-1}\omega^\nu$ the Biot-Savart law. As there is no vortex stretching, the vorticity equation is an advection-diffusion equation which is fairly well behaved. First, it is known that one can construct mild solutions by pushing the initial condition forward in time via a semi-group operator (see Section \ref{sec: prelim mild sols} for the definition and futher details). 
Second, the total enstrophy is conserved for all time. This results in a second invariant for the system which \textit{should} be conserved in the invisicid limit in some sense. In order to guarantee the existance of stationary measures we make th following compactness assumption:
\begin{assumption}\label{assumption: forcing assumptions}
There exists a constant $C > 0$ such that
\begin{align*}
    \sup_{\lambda \in (1,\infty)}\sum_j\|\grad^3 f_j\|_\lambda^2 &\leq C
\end{align*}
\end{assumption}
This is a purely mathematical assumption used to ensure the existence of stationary measures for the 2D vorticity equation. \newline

When extending Kolmogorov's K41 theory to 2D, Kraichnan \cite{kraichnan1967inertial}, Batchelor \cite{batchelor1969computation}, Leith \cite{}, and Fjortoft \cite{fjortoft1953changes} independently observed two different inertial zones, one for each conserved quantity. 
It was observed that now energy moves toward large scales in an inverse cascade, while enstrophy moves to small scales via a direct cascade. Such situations are frequently called dual cascades and are generally known to exist in systems with two conserved quantities\cite{bedrossian2020sufficient}. Specifically the direct cascade flux laws for the velocity (and the vorticity as predicted by Eyink \cite{eyink1996exact}) can be informally stated as there exists a dissipation scale $\ell_\nu$ such that over the inertial range $\ell_\nu \ll \ell \ll \ell_{inj}$
\begin{align*}
    \langle |\delta_{\ell n}\omega^\nu|^2\delta_{\ell n}u^\nu \cdot n\;\rangle \sim -2\eta\ell\\
    \langle |\delta_{\ell n}u^\nu|^2\delta_{\ell n}u^\nu \cdot n\rangle \sim \frac{1}{4}\eta\ell^3\\
    \langle (\delta_{\ell n}u^\nu \cdot n)^3 \rangle \sim \frac{1}{8}\eta\ell^3
\end{align*}
Moreover, there is a \textit{second} inertial range $\ell_{inj} \ll \ell \ll \tilde{\ell}_{nu} \leq \lambda$ for the inverse cascade such that
\begin{align*}
    \langle |\delta_{\ell n}u^\nu|^2\delta_{\ell n}u^\nu \cdot n\rangle \sim 2\varepsilon\ell\\
    \langle (\delta_{\ell n}u^\nu \cdot n)^3 \rangle \sim \frac{3}{2}\varepsilon\ell
\end{align*}

A simple heuristic argument for the existence of two separate inertial ranges with different flux coefficients can be seen from the two associated balance laws for the system:
\begin{align*}
    \nu\E\int_0^T\|\grad \omega^\nu\|_\lambda^2 \;dt= \frac{1}{2}\sum_{j=1}^\infty \|\curl f_j\|_\lambda^2 =: \eta < \infty\\
    \nu\E\int_0^T\|\omega^\nu\|_\lambda^2 \;dt= \frac{1}{2}\sum_{j=1}^\infty \|f_j\|_\lambda^2 = \varepsilon < \infty
\end{align*}
The first comes from Ito's lemma and computing the energy balance for stationary solutions to the vorticity equation, while the second one is the energy balance for stationary solutions to the Navier-Stokes equations after we applied the Biot-Savart Law. As neither of the right hand sides depend on the viscosity, it can be seen that at least one wave-number of $\omega^\nu$ must become singular. Moreover, if there are two singular wave-numbers then flux of energy to these scales must occur at different rates as one is moving like $|k|^2\E|\widehat{\omega^\nu}(k)|^2$ and the other is moving like $\E|\widehat{\omega^\nu}(k)|^2$. \newline

While the existence of both cascades simultaneously had been conjectured for a while, only recently in \cite{boffetta2010evidence} did researchers observe such a phenomena in soap films. This is because the inverse cascade is somewhat unstable as it is due to the difference in the amount of energy at large scales and how much energy the force pushes towards this end of the spectrum. Moreover, as we show in Corollary \ref{corollary: isolated cascades} if the enstrophy is does not blow up fast enough at small wave-numbers then there is no inverse cascade. For these reasons most works regarding inverse cascades include a friction force to dampen the large wave-number terms and collect the energy at small wave-numbers or a hyper/hypo-viscosity term to heighten the effects of the viscous dissipation. See \cites{rutgers1998forced, paret1997experimental} and the references within for more information. Within the mathematics literature far fewer works have been produced compared to the three dimensional case. However, we note that a major inspiration for this paper was the work by Bedrossian et.al. \cite{bedrossian2020sufficient} where they showed sufficient conditions for the existence of a dual cascade to the Navier-Stoke's equations with a frictional term of the form: $\alpha (-\Delta)^{-2\gamma}u^\nu$. In this paper, we generalize their argument to the case without friction.

\subsection{Main Results}\label{sec: intro main results}
Inspired by the ideas in \cite{bedrossian2019sufficient}, \cite{bedrossian2020sufficient}, and \cite{duchon2000inertial} we characterize the direct cascades in terms of the (possible) finite time singularities and the amount of the energy which moves out to infinite wave-numbers. In contrast we characterize an inverse cascade as the amount of energy moving towards the wave-numbers near 0 due to the forcing. On top of reestablishing the results in each of the highlighted works, we can also provide necessary and sufficient conditions for split cascades and isolated cascades without frictional forces in both 2D and 3D. \newline 

First we characterize a direct cascade in both $\T^2$ and $\T^3$ separately due to their differences covered before. 
\begin{theorem}[3D Direct Cascade Characterization]\label{thm: 3d Direct cascade characterization}
    Suppose $\{u\}_{\nu > 0}$ is a sequence of statistically stationary solutions to the statistically forced Navier-Stokes equations \eqref{eq: incompressible NSE} on $\T_\lambda^3$. 
    There exists $N_\nu \geq 1$ such that $\ds\lim_{\nu \to 0}N_\nu = \infty$ and 
    \[
        \liminf_{\nu \to 0}\Big(\nu \sum_{|k| \geq N_\nu}\E\fint_0^T|\widehat{\grad u}|^2 + \E D(u^\nu)\Big) = \varepsilon^*
    \]
    if and only if there exists $\ell_\nu \in (0, 1)$ such that $\ds\lim_{\nu \to 0}\ell_\nu = 0$ and 
    \begin{align}
        \lim_{\ell_I \to 0}\limsup_{\nu \to 0}\sup_{\ell \in [\ell_\nu, \ell_I]}\Big|\frac{1}{\ell}\E\fint_{S^{2}}\fint_0^T\fint_{\T_\lambda^3}|\delta_{\ell n}u^\nu|^2 (\delta_{\ell n}u^\nu\cdot n) \;dxdtdS(n) + \frac{4}{3}\varepsilon^*\Big| = 0\label{eq: 3d S0 limit}\\
        \lim_{\ell_I \to 0}\limsup_{\nu \to 0}\sup_{\ell \in [\ell_\nu, \ell_I]}\Big|\frac{1}{\ell}\E\fint_{S^{2}}\fint_0^T\fint_{\T_\lambda^3} (\delta_{\ell n}u^\nu\cdot n)^3 \;dxdtdS(n) + \frac{4}{5}\varepsilon^*\Big| = 0\label{eq: 3d S long limit}
    \end{align}
\end{theorem}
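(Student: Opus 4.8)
The argument is driven entirely by the K\'arm\'an--Howarth--Monin (KHM) identity developed in Section~\ref{sec: prelim KHM eqs}. For a statistically stationary martingale solution it yields, after averaging over $\T_\lambda^3$, over $t\in[0,T]$, over the probability space, and over the sphere $S^{2}$, an \emph{exact} relation of the form
\begin{equation}\label{eq:khm-sketch}
    \frac{1}{\ell}\,\E\fint_{S^{2}}\fint_0^T\fint_{\T_\lambda^3}|\delta_{\ell n}u^\nu|^2(\delta_{\ell n}u^\nu\cdot n)\,dx\,dt\,dS(n)
    = -\frac{4}{3}\,\Pi^\nu(\ell) + \mathcal R_\nu(\ell),
    \qquad
    \Pi^\nu(\ell) := \varepsilon - \nu\sum_{k\in\Z^3}g(|k|\ell)\,\E\fint_0^T|\widehat{\grad u^\nu}(k)|^2 ,
\end{equation}
valid for every $\ell\in(0,1)$, where $g$ is a fixed profile with $g(0)=1$ and $g(x)\to0$ as $x\to\infty$ (it is the Fourier multiplier of the smoothing used to define $D_\ell$), and $\mathcal R_\nu(\ell)$ collects the two-point forcing correlation at scale $\ell$ together with the discrepancy between $\varepsilon$ and its scale-$\ell$ smoothed version. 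Stationarity is what makes \eqref{eq:khm-sketch} an identity: the $\partial_t$, $\Div$ and $\Delta$ contributions in the local energy balance integrate or average to zero, and It\^o's lemma produces the $\varepsilon$ in $\Pi^\nu(\ell)$ from the quadratic variation of the noise (cf.~\eqref{eq: stoch energy balance}). The proof reads \eqref{eq:khm-sketch} in both directions after identifying $\Pi^\nu(\ell)$ with the weak-anomalous-dissipation (WAD) quantity of Definition~\ref{def: anomalous diffusion}.

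\textbf{Easy inputs.} Two ingredients are routine. First, since every $f_j$ is smooth and divergence free, Taylor expanding $\delta_{\ell n}f_j$ and using the cancellation of the order-$1$ and order-$\ell$ contributions gives $|\mathcal R_\nu(\ell)|\le C\ell^{2}\sum_j\|\grad f_j\|_\lambda^{2}$, uniformly in $\nu$, so $\mathcal R_\nu$ contributes nothing to the iterated limit $\ds\lim_{\ell_I\to0}\limsup_{\nu\to0}\sup_{\ell\in[\ell_\nu,\ell_I]}$ of \eqref{eq: 3d S0 limit}--\eqref{eq: 3d S long limit}. Second, the passage from the $4/3$-law \eqref{eq: 3d S0 limit} to the $4/5$-law \eqref{eq: 3d S long limit} is purely kinematic: writing $\mathcal S_0(\ell)$ and $\mathcal S^\parallel(\ell)$ for the two spherically averaged structure functions in \eqref{eq: 3d S0 limit} and \eqref{eq: 3d S long limit} (so that the left side of \eqref{eq:khm-sketch} is $\mathcal S_0(\ell)/\ell$), the spherical average together with $\Div u^\nu=0$ forces the classical relation $\mathcal S^\parallel(\ell)=\tfrac{3}{\ell^{4}}\int_0^\ell r^{3}\mathcal S_0(r)\,dr$, which turns $\mathcal S_0(r)/r\to-\tfrac43\varepsilon^*$ into $\mathcal S^\parallel(\ell)/\ell\to-\tfrac45\varepsilon^*$; the contribution of the unresolved scales $r\in(0,\ell_\nu)$ is harmless because \eqref{eq:khm-sketch} and the energy balance \eqref{eq: stoch energy balance} already give the uniform a priori bound $|\mathcal S_0(r)|\le Cr$ for all $r\in(0,1)$, and the resulting $(\ell_\nu/\ell)^{5}$-type remainder is absorbed by slightly enlarging $\ell_\nu$ (replacing it by, say, $\ell_\nu^{1/2}$, which still tends to $0$ and only shrinks the supremum).

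\textbf{The matching step.} The heart of the proof is to show that, once $\ell_\nu$ and $N_\nu$ are linked by $N_\nu\asymp\ell_\nu^{-1}$,
\begin{equation}\label{eq:match}
    \lim_{\ell_I\to0}\;\limsup_{\nu\to0}\;\sup_{\ell\in[\ell_\nu,\ell_I]}\Big|\;\Pi^\nu(\ell)\;-\;\Big(\nu\!\!\sum_{|k|\ge N_\nu}\!\!\E\fint_0^T|\widehat{\grad u^\nu}(k)|^2+\E D(u^\nu)\Big)\Big|=0 .
\end{equation}
Using \eqref{eq: stoch energy balance} to rewrite $\Pi^\nu(\ell)=\E D(u^\nu)+\nu\sum_k\big(1-g(|k|\ell)\big)\E\fint_0^T|\widehat{\grad u^\nu}(k)|^2$, the quantity inside the supremum becomes $\nu\sum_k\big(1-g(|k|\ell)-\one_{|k|\ge N_\nu}\big)\E\fint|\widehat{\grad u^\nu}(k)|^2$, which is controlled by (i) a dyadic-band error near $|k|\sim\ell^{-1}\sim N_\nu$ coming from the difference between the smooth multiplier $1-g(|k|\ell)$ and the sharp threshold $\one_{|k|\ge N_\nu}$, and (ii) the ``intermediate'' dissipation $\nu\sum_{\ell^{-1}\le|k|\le N_\nu}\E\fint|\widehat{\grad u^\nu}|^2$ carried by scales between $\ell_\nu$ and $\ell$. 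Both are bounded, uniformly over $\ell\in[\ell_\nu,\ell_I]$, by $\nu\sum_{|k|\ge c\ell_I^{-1}}\E\fint|\widehat{\grad u^\nu}|^2-\nu\sum_{|k|\ge CN_\nu}\E\fint|\widehat{\grad u^\nu}|^2$ plus a harmless remainder; passing to a subsequence on which the WAD quantity and the low-mode dissipation converge, and then sending $\ell_I\to0$, squeezes the non-cascading dissipation $\varepsilon-\varepsilon^*$ onto a fixed bounded set of wavenumbers below $\ell_I^{-1}$ and makes \eqref{eq:match} vanish. This is the only place where stationarity of $\{u^\nu\}$ is genuinely used and where the hypotheses $N_\nu\to\infty$ / $\ell_\nu\to0$ are consumed.

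\textbf{Conclusion and the main obstacle.} Combining \eqref{eq:khm-sketch} with the last two displays: in the forward direction, given $N_\nu\to\infty$ with $\liminf_{\nu\to0}\big(\nu\sum_{|k|\ge N_\nu}\E\fint|\widehat{\grad u^\nu}|^2+\E D(u^\nu)\big)=\varepsilon^*$, put $\ell_\nu:=N_\nu^{-1/2}$ (any scale $\asymp N_\nu^{-1}$ enlarged as in the $4/5$ discussion works), so that \eqref{eq:match} and the forcing bound turn \eqref{eq:khm-sketch} into \eqref{eq: 3d S0 limit} with the same $\varepsilon^*$ --- the $\liminf$ in the hypothesis upgrading to a genuine limit for a suitable choice of $N_\nu$, using the monotonicity of $N\mapsto\nu\sum_{|k|\ge N}\E\fint|\widehat{\grad u^\nu}|^2+\E D(u^\nu)$ --- and then into \eqref{eq: 3d S long limit} by the kinematic identity. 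Conversely, given $\ell_\nu\to0$ for which \eqref{eq: 3d S0 limit} holds, set $N_\nu:=\lceil\ell_\nu^{-1}\rceil\to\infty$; then \eqref{eq:khm-sketch} and \eqref{eq:match} force $\nu\sum_{|k|\ge N_\nu}\E\fint|\widehat{\grad u^\nu}|^2+\E D(u^\nu)\to\varepsilon^*$ in the iterated limit, and since decreasing $\ell_I$ only relaxes the constraint a diagonal extraction yields $\liminf_{\nu\to0}(\cdots)=\varepsilon^*$. The decisive difficulty is the matching step \eqref{eq:match}: one must reconcile the \emph{smooth, physical-scale} cutoff that the Duchon--Robert/KHM framework forces on us with the \emph{sharp, Fourier-space} threshold $N_\nu$ in the definition of WAD, do so uniformly across an inertial range $[\ell_\nu,\ell_I]$ that is itself widening as $\nu\to0$, and handle solutions rough enough that $D(u^\nu)\not\equiv0$ --- precisely the generality in which \cite{bedrossian2019sufficient} does not operate, since there spatial continuity forces $D\equiv0$ and the Taylor-microscale hypothesis makes $\nu\sum_{|k|\le N_\nu}\E\fint|\widehat{\grad u^\nu}|^2$ vanish outright, trivialising \eqref{eq:match}.
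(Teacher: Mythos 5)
Your treatment of the $4/5$ law is where the proposal genuinely breaks. You claim that incompressibility plus spherical averaging alone force the ``purely kinematic'' identity $\mathcal S^\parallel(\ell)=\frac{3}{\ell^{4}}\int_0^\ell r^{3}\mathcal S_0(r)\,dr$, so that \eqref{eq: 3d S long limit} follows from \eqref{eq: 3d S0 limit} by averaging. That identity (equivalently $\mathcal S_0=\frac43\mathcal S^\parallel+\frac{\ell}{3}(\mathcal S^\parallel)'$) is the classical \emph{isotropy} relation; it does not follow from $\Div u^\nu=0$ for a merely homogeneous field. The only kinematic consequence of incompressibility for the tensor $\fint_{\T_\lambda^3}\delta u_i\,\delta u_j\,\delta u_k\,dx$ is a divergence identity whose right-hand side involves correlations of $u^\nu$ with $(u^\nu\cdot\grad)u^\nu$ --- precisely the dynamical terms that the KHM relations close using the equation. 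This is why the paper's longitudinal relation \eqref{eq: velocity long KHM} carries, besides $\frac{2}{\ell^{4}}\int_0^\ell r^{3}S_{vel}(r)\,dr$, its \emph{own} viscous two-point term $-4\nu(\Gamma_{vel}^\parallel)'$ and forcing term $a_{vel}^\parallel$; in the paper the constant $4/5$ arises as $\frac{4}{15}+\frac{8}{15}$, the first piece coming from analyzing $-\frac{4\nu}{\ell}(\Gamma_{vel}^\parallel)'$ with Lemma \ref{lemma: longitudinal term integral} ($p=1$) and the filtration Lemma \ref{lemma: small scale wave-number sum limit}, the second from the averaged $4/3$ law. Your shortcut silently discards the viscous and forcing contributions of the longitudinal relation, and the fact that the isotropy identity happens to reproduce the constant does not repair this, since that identity is false in the anisotropic generality of the theorem.

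The matching step is also overclaimed. You assert, uniformly over $\ell\in[\ell_\nu,\ell_I]$ with $N_\nu\asymp\ell_\nu^{-1}$, that the smooth-multiplier quantity $\Pi^\nu(\ell)$ agrees with the sharp-cutoff WAD quantity up to errors vanishing in the iterated limit. For $\ell$ comparable to the fixed $\ell_I$ the difference is essentially $\nu\sum_{\ell^{-1}\lesssim|k|\lesssim N_\nu}\E\fint_0^T|\widehat{\grad u^\nu}|^2$, the dissipation in the intermediate band, and nothing in the hypothesis ``$\exists N_\nu$ with $\liminf(\cdots)=\varepsilon^*$'' controls it: the non-cascading dissipation may sit at wavenumbers diverging more slowly than $N_\nu$ (say like $\nu^{-1/2}$ when $N_\nu\sim\nu^{-1}$), in which case your ``squeezing onto a fixed bounded set of wavenumbers below $\ell_I^{-1}$'' assumes exactly what has to be proved. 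The paper never establishes (or uses) such a uniform reconciliation of smooth and sharp cutoffs: it computes the multiplier $c(\ell,k)$ explicitly (Bessel-type, with $c(\ell,k)\to0$ as $\ell|k|\to0$ and $|\tfrac43-c(\ell,k)|\le 2/(\ell|k|)$) and then invokes Lemma \ref{lemma: small scale wave-number sum limit}, an equivalence of \emph{iterated} limits in which the scales are tied only by $\ell_\nu N_\nu\to\infty$ and the inner extremum over $\ell$ --- not a uniform-in-$\ell$ bound --- is what filters out the intermediate band; note also that your converse choice $N_\nu=\lceil\ell_\nu^{-1}\rceil$ violates even this separation condition. So while your $4/3$-law skeleton (KHM identity, smoothness of the forcing correlation, the energy balance \eqref{eq: energy balance} producing $\E D(u^\nu)$) coincides with the paper's Steps 1--2, the two load-bearing steps --- the filtration lemma replacing your matching claim, and the separate longitudinal KHM analysis for the $4/5$ law --- are missing or incorrect as proposed.
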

Similarly in 2D:
\begin{theorem}[2D Direct Cascade Characterization]\label{thm: 2d direct cascade characterization}
    Let $\{u\}_{\nu > 0}$ be a sequence of statistically stationary solutions to the stochastically forced Navier-Stokes equations\eqref{eq: incompressible NSE} and the 2D vorticity equation \eqref{eq: vorticity eq} on $\T_\lambda^2$. 
    There exists $N_\nu \geq 1$ such that $\ds\lim_{\nu \to 0}N_\nu = \infty$ and 
    \[
        \liminf_{\nu \to 0}\nu \sum_{|k| \geq N_\nu}\E|\widehat{\grad \omega^\nu}|^2 = \eta^*
    \]
    if and only if there exists $\ell_\nu \in (0, 1)$ such that $\ds\lim_{\nu \to 0}\ell_\nu = 0$ and 
    \begin{align}
        \lim_{\ell_I \to 0}\limsup_{\nu \to 0}\sup_{\ell \in [\ell_\nu, \ell_I]}\Big|\frac{1}{\ell^3}\E\fint_{S^{1}}\fint_0^T\fint_{\T_\lambda^2}|\delta_{\ell n}\omega^\nu|^2 (\delta_{\ell n}u^\nu\cdot n) \;dxdtdS(n) + 2\eta^*\Big| = 0\label{eq: vorticity limit}\\
        \lim_{\ell_I \to 0}\limsup_{\nu \to 0}\sup_{\ell \in [\ell_\nu, \ell_I]}\Big|\frac{1}{\ell^3}\E\fint_{S^{1}}\fint_0^T\fint_{\T_\lambda^2}|\delta_{\ell n}u^\nu|^2 (\delta_{\ell n}u^\nu\cdot n) \;dxdtdS(n) - \frac{1}{4}\eta^*\Big| = 0\label{eq: 2d S0 limit}\\
        \lim_{\ell_I \to 0}\limsup_{\nu \to 0}\sup_{\ell \in [\ell_\nu, \ell_I]}\Big|\frac{1}{\ell^3}\E\fint_{S^{1}}\fint_0^T\fint_{\T_\lambda^2} (\delta_{\ell n}u^\nu\cdot n)^3 \;dxdtdS(n) - \frac{1}{8}\eta^*\Big| = 0\label{eq: 2d S long limit}
    \end{align}
\end{theorem}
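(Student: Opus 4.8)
The plan is to follow the proof of Theorem~\ref{thm: 3d Direct cascade characterization}, adjusted to the enstrophy cascade. In two dimensions, with finite expected enstrophy, the quantity that organises the estimates is the enstrophy $\tfrac12\|\omega^\nu\|_\lambda^2$ rather than the kinetic energy, so the two-point statistics of $\omega^\nu$ replace those of $u^\nu$; at the same time $u^\nu$ is then regular enough that the Duchon--Robert dissipation defect $D(u^\nu)$ vanishes identically, which is why no such term appears in the statement. First I would apply It\^o's formula to the spatially and temporally averaged two-point functions $\E\fint_0^T\fint_{\T_\lambda^2}\omega^\nu(x)\,\omega^\nu(x+\ell n)\,dx\,dt$, $\E\fint_0^T\fint_{\T_\lambda^2}u^\nu(x)\cdot u^\nu(x+\ell n)\,dx\,dt$ and $\E\fint_0^T\fint_{\T_\lambda^2}(u^\nu(x)\cdot n)(u^\nu(x+\ell n)\cdot n)\,dx\,dt$, using \eqref{eq: vorticity eq}--\eqref{eq: incompressible NSE}. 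Statistical stationarity forces the time derivative in expectation to vanish, and after the manipulations of Section~\ref{sec: prelim KHM eqs} (integration by parts, $\grad\cdot u^\nu=0$, homogeneity, and the Biot--Savart law) one is left with three exact identities of the schematic shape
\[
    \grad_r\cdot\big[\,\text{third-order increment}\,\big]
    \;=\;
    \nu\,\Delta_r\big[\,\text{second-order increment}\,\big]
    \;-\;
    \big[\,\text{explicit forcing two-point function}\,\big],
\]
in which the three third-order increments are exactly the integrands of \eqref{eq: vorticity limit}, \eqref{eq: 2d S0 limit}, \eqref{eq: 2d S long limit}, and the forcing two-point functions carry the constants $2\eta$ and $2\varepsilon$.

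For the forward implication I would average each identity over $n\in S^1$, use $\overline{\grad_r\cdot V}=\tfrac1\ell\partial_\ell(\ell\,\overline{V\cdot n})$ in $d=2$ to reduce it to a first-order ODE in $\ell$, and integrate from $\ell=0$ (the smoothness of the $f_j$ makes the forcing terms vanish there to the order needed), producing for each structure function an identity ``structure function $=$ forcing term $-\,2\nu\,\partial_\ell(\text{second-order increment})(\ell)$''. Taylor expanding the forcing term gives the bare coefficient at order $\ell$; the leading small-$\ell$ part of the viscous term is, by the same mechanism as in 3D, exactly equal and opposite, because the small-$\ell$ expansion of the second-order increment is proportional to $\langle|\grad\omega^\nu|^2\rangle$ (resp.\ $\langle|\grad u^\nu|^2\rangle$) and $\nu\langle|\grad\omega^\nu|^2\rangle=\eta$, $\nu\langle|\grad u^\nu|^2\rangle=\varepsilon$ are the balance laws. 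For the two velocity structure functions this cancels the order-$\ell$ term outright (no net energy flux passes through the direct-cascade range), whereas for the vorticity flux it does not, the residual being governed by $\eta^*$. To identify the remainder in each case I would write the viscous term in Fourier as $-4\nu\sum_k\E\fint_0^T|\widehat{\omega^\nu}(k)|^2\,|k|^m J_m(\ell|k|)$ --- here the Biot--Savart identity $|\widehat{\omega^\nu}(k)|^2=|k|^2|\widehat{u^\nu}(k)|^2$ is what turns the velocity remainders into enstrophy sums, explaining why $\eta^*$, and not $\varepsilon^*$, is the coefficient in \eqref{eq: 2d S0 limit}--\eqref{eq: 2d S long limit} --- and split the sum at $|k|=N_\nu$. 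The block $|k|\ge N_\nu$ is $O(N_\nu^{-c})$ uniformly in $\ell$ by boundedness of the Bessel kernels together with $|k|\ge N_\nu$; the block $|k|<N_\nu$, using $J_m(x)=c_mx^m+O(x^{m+2})$ and the energy balance $\nu\E\|\omega^\nu\|_\lambda^2=\varepsilon$, tends --- as $\nu\to0$ along the subsequence realising the $\liminf$ of the hypothesis, then $\ell_I\to0$ --- to precisely the contribution of $\eta-\eta^*$, so that the surviving coefficient equals the stated multiple of $\eta^*$. The classical incompressibility identity between $\langle|\delta_{\ell n}u^\nu|^2\,\delta_{\ell n}u^\nu\cdot n\rangle$ and $\langle(\delta_{\ell n}u^\nu\cdot n)^3\rangle$ then deduces \eqref{eq: 2d S long limit} from \eqref{eq: 2d S0 limit}.

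For the converse I would run this representation backwards: since the forcing term is explicit, \eqref{eq: vorticity limit}--\eqref{eq: 2d S long limit} pin down $\lim_{\ell_I\to0}\limsup_{\nu\to0}\sup_{\ell\in[\ell_\nu,\ell_I]}$ of the spherically integrated viscous term, and feeding this back into the Fourier decomposition --- while using compactness of the stationary measures, which in 2D is supplied by Assumption~\ref{assumption: forcing assumptions} and the uniform enstrophy bound --- a diagonal argument in the spirit of Remark~\ref{remark: bedrossian implies WAD} extracts $N_\nu\to\infty$ (for instance with $\ell_\nu N_\nu\to 0$ slowly enough) for which $\liminf_{\nu\to0}\nu\sum_{|k|\ge N_\nu}\E\fint_0^T|\widehat{\grad\omega^\nu}(k)|^2=\eta^*$.

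The step I expect to be hardest, in both 2D and 3D, is the uniformity in $\ell$ over the whole shrinking range $[\ell_\nu,\ell_I]$ in the cancellation argument: for $\ell$ near $\ell_\nu$ the viscous term is no longer negligible and the crude bounds give only $O(\eta\,\ell^p)$ rather than the decay that is needed, so the Fourier sum must in fact be cut at an intermediate wavenumber depending on both $\nu$ and $\ell$, and this is precisely the estimate that fixes the admissible growth rate of $N_\nu$, and hence of $\ell_\nu$. Compared with the 3D case the bookkeeping is lighter in that there is no defect measure $D(u^\nu)$ to carry through the Duchon--Robert form of the identity, but heavier in that one must control $\nu\sum_k|k|^2\E|\widehat{\omega^\nu}(k)|^2$ for the less regular field $\omega^\nu$, which is where Assumption~\ref{assumption: forcing assumptions} enters.
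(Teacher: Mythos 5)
Your overall route is the paper's: the KHM relations of Proposition \ref{prop: KHM relatons and regularity} (which you would rederive via It\^o, as in \cite{bedrossian2020sufficient}), Taylor expansion of the forcing correlations, a Fourier/Bessel representation of the viscous two-point terms, the balances \eqref{eq: energy balance} and \eqref{eq: enstrophy balance} together with the Biot--Savart identity $|k|^4|\widehat{u^\nu}|^2=|\widehat{\grad\omega^\nu}|^2$ (which is indeed why $\eta^*$ and not $\varepsilon^*$ appears in \eqref{eq: 2d S0 limit}--\eqref{eq: 2d S long limit}), and the use of \eqref{eq: 2d S0 limit} to handle the term $\ell^{-6}\int_0^\ell r^2S_{vel}(r)\,dr$ in the longitudinal law; note only that \eqref{eq: 2d S long limit} is not obtained from \eqref{eq: 2d S0 limit} by a pointwise incompressibility identity --- one still runs Steps 1--3 for $\Gamma_{vel}^{\parallel}$ with its own forcing and viscous terms, exactly as you set up when you listed the third two-point function.

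The genuine gap is in the step you yourself identify as hardest: the uniform-in-$\ell$ identification of the cut-off Fourier sums, which in the paper is isolated as Lemma \ref{lemma: small scale wave-number sum limit} and carries the entire ``if and only if''. Your two block estimates fail as stated. The remainder kernels are functions of $\ell|k|$ (of the type $J_m(\ell|k|)/(\ell|k|)^m$), so on the block $|k|\ge N_\nu$ they are small only when $\ell_\nu N_\nu$ is large; the block is not $O(N_\nu^{-c})$ ``uniformly in $\ell$'' from $|k|\ge N_\nu$ alone, and the needed coupling is $\ell_\nu N_\nu\to\infty$ --- the opposite of your parenthetical choice $\ell_\nu N_\nu\to0$. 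With your choice, a spectrum whose dissipation concentrates near $|k|\sim N_\nu$ has $\ell|k|\ll1$ at scales $\ell\sim\ell_\nu$, the kernels have not settled to their asymptotic constants, and the supremum over $\ell\in[\ell_\nu,\ell_I]$ of the deviation does not vanish even though the tail sum equals $\eta^*$; the equivalence breaks. Symmetrically, on the block $|k|<N_\nu$ you cannot invoke $J_m(x)=c_mx^m+O(x^{m+2})$ uniformly, because $\ell$ runs up to the fixed $\ell_I$ while $N_\nu\to\infty$, so $\ell|k|$ is unbounded there; in the paper that block is not shown to converge to the $\eta-\eta^*$ contribution at all --- the exact constants ($2\eta$, $\eta/4$, $\eta/24$, and the cancelling $2\varepsilon/\ell^2$) are extracted beforehand via the balance laws, and the low block, carrying coefficients that vanish as $\ell|k|\to0$, is shown to contribute nothing after the iterated $\liminf_{\nu}$, $\inf_{\ell}$, $\ell_I\to0$ limits. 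Repairing your sketch amounts to proving Lemma \ref{lemma: small scale wave-number sum limit} with the correct coupling; once that is in place, the rest of your argument (including the converse, which needs no compactness of the stationary measures --- the uniform bound from \eqref{eq: enstrophy balance} suffices) proceeds as in Sections \ref{sec: 2d direct vorticity limit}--\ref{sec: 2d direct S long}.
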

Here the limit $\ell_I \to 0$ should \textit{not} be confused with injection scale of energy moving to small scales. Instead $\ell_I$ is acting as a dummy variable signifying when the scales go to 0. This is similar to the limits shown in \cites{frisch1995turbulence, bedrossian2019sufficient, bedrossian2020sufficient, papathanasiou2021sufficient}. 
\begin{remark}
    In both 2D and 3D, the size of the torus $\lambda$ is insignificant. This is because the direct cascade limit localizes the structure function. For this reason, one can also extend these ideas to domains with boundaries by localizing each of the quantities. See \cite{papathanasiou2021sufficient} for details.
\end{remark}

While the direct cascade, is significantly different between $\T^2$ and $\T^3$, the existence of the inverse cascade is nearly identical in both dimensions. Moreover, the inverse cascade is very sensitive to the size of the domain and requires $\lambda \to \infty$. This is because while the direct cascade localizes the structure functions, the inverse cascade globalizes them.
\begin{theorem}[Inverse Cascade Characterization]\label{thm: inverse cascade characterization}
    Suppose that $\lambda = \lambda(\nu) < \infty$ is a continuous monotone increasing function such that $\lim_{\nu \to 0}\lambda = \infty$. Let $\{u\}_{\nu > 0}$ be a sequence of statistically stationary solutions to \eqref{eq: incompressible NSE} with mean-zero, divergence free forcing correlations $f_j$.  
    There exists a decreasing sequence $M_\nu$ satisfying $\ds\lim_{\nu \to 0}M_\nu = 0$ such that 
    \[
        \liminf_{\nu \to 0}\nu\sum_{|k| \leq M_\nu}\E\fint_0^T|\widehat{\grad u^\nu}|^2 = \varepsilon^*
    \]
    if and only if there exists an increasing sequence $\tilde{\ell}_\nu \in (1, \lambda)$ satisfying $\ds\lim_{\nu \to 0}\tilde{\ell}_\nu = \infty$ such that:
    \begin{align}
        \lim_{\ell_I \to \infty}\limsup_{\nu \to 0}\sup_{\ell \in [\ell_I, \tilde{\ell}_\nu]}\Big|\frac{1}{\ell}\E\fint_{S^{d-1}}\fint_0^T\fint_{\T_\lambda^d}|\delta_{\ell n}u^\nu|^2 (\delta_{\ell n}u^\nu\cdot n) \;dxdtdS(n) - \gamma_d\varepsilon^*\Big| = 0\label{eq: inverse 4/3 law}\\
        \lim_{\ell_I \to \infty}\limsup_{\nu \to 0}\sup_{\ell \in [\ell_I, \tilde{\ell}_\nu]}\Big|\frac{1}{\ell}\E\fint_{S^{d-1}}\fint_0^T\fint_{\T_\lambda^d} (\delta_{\ell n}u^\nu\cdot n)^3 \;dxdtdS(n) - \kappa_d\varepsilon^*\Big| = 0 \label{eq: inverse 4/5 law}
    \end{align}
    where the coefficients are given by
    \[
        \gamma_d = \begin{cases}
            2 & d = 2\\
            4/3 & d = 3
        \end{cases}\quad \quad \kappa_d = \begin{cases}
            3/2 & d = 2\\
            4/5 & d = 3
        \end{cases}
    \]  
\end{theorem}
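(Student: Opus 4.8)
The plan is to run the entire argument through the Kármán--Howarth--Monin (KHM) relation for stationary martingale solutions established in Section~\ref{sec: prelim KHM eqs}, evaluated at \emph{large} separations. Averaging the KHM identity over $S^{d-1}$ and using stationarity to annihilate the time derivative gives, for every $\ell>0$, an identity of the schematic form
\begin{equation*}
 c_d\,\frac{1}{\ell^{d-1}}\,\partial_\ell\!\Big(\ell^{d-1}\,\mathcal S_3(\ell)\Big)\;=\;\varepsilon_f(\ell)\;-\;\nu\,\mathcal V(\ell)\;-\;\E\mathcal D_\ell(u^\nu),
\end{equation*}
where $\mathcal S_3(\ell)=\E\fint_{S^{d-1}}\fint_0^T\fint_{\T_\lambda^d}|\delta_{\ell n}u^\nu|^2(\delta_{\ell n}u^\nu\cdot n)$, the forcing term is $\varepsilon_f(\ell)=\tfrac12\sum_j\fint_{S^{d-1}}\fint_{\T_\lambda^d}|\delta_{\ell n}f_j|^2$, the viscous term $\nu\,\mathcal V(\ell)$ is comparable (through a Bessel-type kernel that is $\approx\one_{\{|k|\gtrsim 1/\ell\}}$) to $\nu\sum_{|k|\gtrsim 1/\ell}|k|^2\E\fint_0^T|\widehat{u^\nu}(k)|^2$, and $\E\mathcal D_\ell$ is the Duchon--Robert defect seen at scale $\ell$. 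Alongside this I will use the incompressibility/isotropy identity relating $\mathcal S_3(\ell)$ to the longitudinal object $\E\fint_{S^{d-1}}\fint_0^T\fint(\delta_{\ell n}u^\nu\cdot n)^3$; for increments whose spherical averages are asymptotically affine in $\ell$ this identity collapses to the constant proportionality encoded by $\gamma_d$ and $\kappa_d$, so that \eqref{eq: inverse 4/5 law} follows from \eqref{eq: inverse 4/3 law} and it suffices to track $\mathcal S_3$.

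Two preparatory facts do most of the work. First, since the $f_j$ are fixed, smooth and mean-zero, $\widehat{f_j}$ is supported in a band $\{|k|\ge k_0>0\}$ independent of $\lambda$, so the kernel $1-b_d(\ell|k|)$ (with $b_d(r):=\fint_{S^{d-1}}\cos(r\,n\cdot e)\,dS(n)$) forces $\varepsilon_f(\ell)\to 2\varepsilon$ as $\ell\to\infty$ at a rate uniform in $\nu$ and $\lambda$; this is the ``globalization'' anticipated before the theorem, and is exactly where $\lambda\to\infty$ and mean-zero forcing are used. Second, the stationary energy balance \eqref{eq: stoch energy balance} and its low-pass version give, for any $K$ below the forcing band, $\nu\sum_{|k|\le K}|k|^2\E\fint_0^T|\widehat{u^\nu}|^2=\Pi^{\mathrm{inv}}_\nu(K)$, the net nonlinear flux of energy \emph{into} $\{|k|\le K\}$; this is precisely what $\mathcal S_3(\ell)/\ell$ detects at $K=1/\ell$, and it is monotone in $K$.

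For ($\Leftarrow$), assume \eqref{eq: inverse 4/3 law}--\eqref{eq: inverse 4/5 law}. Reading the KHM identity at $\ell\in[\ell_I,\tilde\ell_\nu]$ and inserting $\varepsilon_f(\ell)\to2\varepsilon$ and $\mathcal S_3(\ell)=\gamma_d\varepsilon^*\ell+o(\ell)$ (uniform on the window), one solves for $\nu\,\mathcal V(\tilde\ell_\nu)$, hence for $\nu\sum_{|k|\le 1/\tilde\ell_\nu}|k|^2\E\fint_0^T|\widehat{u^\nu}|^2$, and finds it tends to $\varepsilon^*$ after sending $\nu\to0$ and then $\ell_I\to\infty$; taking $M_\nu:=1/\tilde\ell_\nu\to0$ (which is $<\lambda$ since $\tilde\ell_\nu\in(1,\lambda)$) gives the required decreasing sequence. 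For ($\Rightarrow$), assume the inverse flux equals $\varepsilon^*$ along cutoffs $M_\nu\to0$, set $\tilde\ell_\nu\approx 1/M_\nu$ truncated to stay in $(1,\lambda)$ (enlarging $M_\nu$ to $\max(M_\nu,2/\lambda)$ if needed and using monotonicity of $K\mapsto\Pi^{\mathrm{inv}}_\nu(K)$ to keep the liminf equal to $\varepsilon^*$). On the window $\ell\in[\ell_I,\tilde\ell_\nu]$: $\varepsilon_f(\ell)=2\varepsilon+o(1)$ uniformly in $\nu$ (with $o(1)$ as $\ell_I\to\infty$); $\E\mathcal D_\ell\equiv 0$ at these large scales because the Duchon--Robert defect carries no mass in a fixed low-frequency band; and $\nu\,\mathcal V(\ell)$ is squeezed, by monotonicity of the low-pass dissipation and the hypothesis, between $\nu\sum_{|k|\le M_\nu}|k|^2\E\fint_0^T|\widehat{u^\nu}|^2\to\varepsilon^*$ and $\nu\sum_{|k|<1/\ell_I}|k|^2\E\fint_0^T|\widehat{u^\nu}|^2$, the latter also driven to $\varepsilon^*$ as $\ell_I\to\infty$ because everything in \eqref{eq: stoch energy balance} not accounted for by the low-mode flux and $\E D$ must reside in the high-mode (direct-cascade) dissipation $\nu\sum_{|k|\gtrsim1}(\cdot)$. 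Feeding these three limits into the KHM identity and integrating from $\ell_I$ to $\ell$ yields $\mathcal S_3(\ell)=\gamma_d\varepsilon^*\ell+o(\ell)$ on the window, and \eqref{eq: inverse 4/5 law} follows from the incompressibility identity with constant $\kappa_d$.

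The step I expect to be hardest is the uniformity in the last display: one must show the viscous/defect/forcing splitting of the KHM identity holds with remainders that are $o(\ell)$ \emph{uniformly} over the sliding window $\ell\in[\ell_I,\tilde\ell_\nu]$ and \emph{uniformly} in $\nu$, and in particular that $\nu\sum_{|k|\le K}|k|^2\E\fint_0^T|\widehat{u^\nu}|^2$ is essentially flat for $K\in[M_\nu,\,1/\ell_I]$ --- i.e., that no energy is dissipated in the large-scale inertial range, which is the rigorous meaning of the cascade being a \emph{local} process. Making the bookkeeping of $M_\nu$ versus $\tilde\ell_\nu$ versus $\lambda$ close simultaneously (so that $\tilde\ell_\nu\to\infty$, $\tilde\ell_\nu<\lambda$, $1/\tilde\ell_\nu$ stays below the forcing band, and the two-sided squeeze survives $\nu\to0$) is the delicate technical point; the remainder is the KHM identity together with elementary Bessel-kernel estimates for $\varepsilon_f$ and for the second-order structure function.
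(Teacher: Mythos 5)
Your overall strategy (KHM relations evaluated at large separations, combined with the stationary energy balance) is the same skeleton as the paper's proof, but two of your load-bearing steps do not go through as described. First, you cannot obtain \eqref{eq: inverse 4/5 law} from \eqref{eq: inverse 4/3 law} by an ``incompressibility/isotropy identity'' that collapses to a constant proportionality: no isotropy is assumed in the theorem, and the constants show that no such kinematic proportionality exists. In the paper the longitudinal law is run through its own KHM relation \eqref{eq: velocity long KHM}, and the limiting constant splits as $\kappa_d = 4\beta_d(2) + \tfrac{8\beta_d(1)}{d+2}$: the term $\tfrac{8\beta_d(1)}{d+2}\varepsilon^* = \tfrac{2\gamma_d}{d+2}\varepsilon^*$ is what you get by feeding the affine asymptotics of $S_{vel}$ into the integral term $\tfrac{2}{\ell^{d+1}}\int_0^\ell r^d S_{vel}(r)\,dr$, while the remaining $4\beta_d(2)\varepsilon^*$ comes from the large-scale limit of the longitudinal viscous correlation $-\tfrac{4\nu}{\ell}(\Gamma_{vel}^\parallel)'(\ell)$, which needs its own Fourier expansion (Lemma \ref{lemma: longitudinal term integral}) and a second application of the filtration lemma. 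A pure proportionality argument would give $8/15$ instead of $4/5$ in 3D (and $1$ instead of $3/2$ in 2D); note also that $\kappa_d/\gamma_d$ is $3/5$ in 3D but $3/4$ for the 2D inverse cascade and $1/2$ for the 2D direct cascade, so ``it suffices to track $\mathcal S_3$'' is false.

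Second, the steps you yourself flag as hardest are exactly where the content of the equivalence lies, and your justifications for them are not valid. The hypothesis that the $f_j$ are smooth and mean-zero does not give a spectral gap $\{|k|\ge k_0>0\}$ independent of $\lambda$; since $\lambda=\lambda(\nu)\to\infty$, the lattice $\tfrac{2\pi}{\lambda}\Z^d$ contains nonzero modes arbitrarily close to $0$, so the uniform convergence of your $\varepsilon_f(\ell)$ needs an argument (the paper gets it from Lemma \ref{lemma: large scale wave-number sum limit} using only $\widehat{f_j}(0)=0$). Likewise, in your ($\Rightarrow$) direction the squeeze requires $\nu\sum_{|k|\le K}\E\fint_0^T|\widehat{\grad u^\nu}|^2$ to converge to $\varepsilon^*$ uniformly over $K\in[M_\nu,1/\ell_I]$; asserting that all remaining dissipation ``must reside in the high-mode dissipation'' does not follow from the hypothesis, which only constrains the sum below $M_\nu$. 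This uniform-window statement is precisely the conclusion of the paper's large-scale filtration Lemma \ref{lemma: large scale wave-number sum limit} (with the coupling $\tilde\ell_\nu = O(M_\nu^{-1})$ and a squeeze on the coefficients $c(\ell,k)$), so your proposal assumes the key lemma rather than proving it. Finally, the Duchon--Robert term is not dispatched by saying it ``carries no mass in a fixed low-frequency band'': in the paper $\E D(u^\nu)$ enters only through the global balance \eqref{eq: energy balance} and cancels in the Step~3 bookkeeping; you would need either that cancellation or a genuine localization argument, which you have not supplied.
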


\begin{remark}
    Notice that the difference in coefficients $\gamma_d$ and $\kappa_d$ when $d = 2,3$ are due to the difference in dimension but the proof is otherwise  exactly the same.
\end{remark}
\begin{remark}
    Due to the Biot-Savart law, in 2D, the equivalent condition for the inverse cascade can be reformulated as 
    \[
        \liminf_{\nu \to 0}\nu\sum_{|k| \leq M_\nu}\E|\widehat{\omega^\nu}|^2 = \varepsilon^* + \frac{1}{2}\sum_j|\widehat{f_j}(0)|^2
    \]
\end{remark}
\begin{remark}
    Notice that the flux laws for the direct cascades are defined in the energy/enstrophy dissipated by the viscosity at the infinite wave-numbers and any potential finite time local singularities. Whereas, as an artifact of the proof to Theorem \ref{thm: inverse cascade characterization} an inverse cascade is a difference between the average force and the energy dissipation at the largest scales. However the exact balance between these quantities differs between the flux laws \eqref{eq: inverse 4/3 law} and \eqref{eq: inverse 4/5 law}. Moreover, for \eqref{eq: inverse 4/5 law}, the balance even changes with dimension.  
\end{remark}

\begin{remark}
    This characterization is in agreement with the weak anomalous dissipation results for direct cascades from \cites{bedrossian2019sufficient} in the case where $\E D(u^\nu) \equiv 0$. By this we mean, that if the energy is blows up at most like $1/\nu$, then a sufficient estimate for $N_\nu$ is given by:
    \[
        N_\nu^2 = o((\nu\E\|u^\nu\|_{\lambda}^2)^{-1})
    \]
    See Remark \ref{remark: bedrossian implies WAD} for details.
    Furthermore, when $\E D(u^\nu) \equiv 0$ then existence of such a sequence $N_\nu$ signifies that $\E\||\grad|u_\nu\|_{L^2} = \infty$
\end{remark}

\begin{remark}
    Notice that if one knows the spectrum of $u^\nu$ through a scaling argument or an ad-hoc assumption, then one can quickly find the sequences $N_\nu$ and $M_\nu$ such that either an inverse or direct cascade can occur.
\end{remark}

Note that essentially one can view the existence of a inverse cascade as there is a singularity at $k=0$ in the spectrum of $\lim_{\nu \to 0}u^\nu$ while the existence of the direct cascade is the existence of a singularity at $k = \infty$. However note that the strength of such singularities (in terms of the energy spectrum) is particularly vague, i.e. what is the order of the singularity, and there is no requirement on the uniqueness of such spectral singularities. Thus solutions may theoretically have both an inverse and direct cascades simultaneously. Physical experiments in which such behaviour is known to occur appears in thin film turbulence \cites{musacchio2017split, GOLDBURG1997340} and soap films \cite{rutgers1998forced}. The existence of such split cascades in the velocity also allows for the dual cascades (in the vorticity) seen in 2D turbulence \cite{kraichnan1967inertial}. Thus we provide some example conditions which imply the existence of split and isolated cascades. 

\begin{corollary}
     Suppose $\{u^\nu\}_{\nu > 0}$ is a sequence of statistically stationary solutions to \eqref{eq: incompressible NSE} on $\T^3$, then 
     \[
        \lim_{\nu \to 0}\nu\E\|u^\nu(t)\|_{L^2}^2 = 0 \quad \forall \; t
     \]
     is a sufficient condition for an isolated direct cascade in the sense of the flux laws \eqref{eq: 4/3 law} and \eqref{eq: 4/5 law}
\end{corollary}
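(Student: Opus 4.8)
The plan is to deduce the corollary from the characterization Theorem~\ref{thm: 3d Direct cascade characterization} by exhibiting a cutoff sequence $N_\nu$ for which the associated liminf captures the \emph{entire} injection rate, i.e. $\varepsilon^* = \varepsilon$; here ``isolated'' refers precisely to the fact that no fraction of $\varepsilon$ is dissipated at order-one scales or pushed toward $k = 0$.

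First I would note that, by statistical stationarity, $\E\|u^\nu(t)\|_{L^2}^2$ is independent of $t$, so the hypothesis reads $a_\nu := \nu\,\E\|u^\nu\|_{L^2}^2 \to 0$ as $\nu \to 0$; moreover $a_\nu > 0$, for otherwise $u^\nu \equiv 0$ and the energy balance \eqref{eq: stoch energy balance} would force $\varepsilon = 0$. I would then set $N_\nu := \max\{1,\ \lfloor a_\nu^{-1/4}\rfloor\}$, so that $N_\nu \ge 1$, $N_\nu \to \infty$, and $N_\nu^2 a_\nu \le a_\nu^{1/2} \to 0$. Parseval's identity on $\T_\lambda^3$ together with stationarity (which removes the time average inside the expectation) gives
\[
\nu\sum_{|k| < N_\nu}\E\fint_0^T|\widehat{\grad u^\nu}(k)|^2 \;=\; \nu\sum_{|k| < N_\nu}|k|^2\,\E\fint_0^T|\widehat{u^\nu}(k)|^2 \;\le\; N_\nu^2\,\nu\,\E\|u^\nu\|_{L^2}^2 \;=\; N_\nu^2 a_\nu \;\longrightarrow\; 0 .
\]
The same estimate shows $\nu\sum_{|k|\le M}\E\fint_0^T|\widehat{\grad u^\nu}|^2 \to 0$ for any fixed cutoff $M$, which is why no inverse-type component is present and the cascade is isolated.

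Next I would insert this into the energy balance \eqref{eq: stoch energy balance}, splitting the dissipation at $|k| = N_\nu$:
\[
\nu\sum_{|k| \ge N_\nu}\E\fint_0^T|\widehat{\grad u^\nu}|^2 \;+\; \E D(u^\nu) \;=\; \varepsilon \;-\; \nu\sum_{|k| < N_\nu}\E\fint_0^T|\widehat{\grad u^\nu}|^2 .
\]
The right-hand side converges to $\varepsilon$ by the previous step, hence so does the left-hand side; in particular its liminf equals $\varepsilon$, i.e. the hypothesis of Theorem~\ref{thm: 3d Direct cascade characterization} holds with $\varepsilon^* = \varepsilon$. Invoking the forward implication of that theorem with this $N_\nu$ produces $\ell_\nu \to 0$ along which \eqref{eq: 3d S0 limit}--\eqref{eq: 3d S long limit} hold with coefficients $\tfrac{4}{3}\varepsilon$ and $\tfrac{4}{5}\varepsilon$ -- these being the rigorous incarnations of the classical laws \eqref{eq: 4/3 law} and \eqref{eq: 4/5 law}.

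There is no substantial obstacle here: the corollary is in essence the observation (cf. Remark~\ref{remark: bedrossian implies WAD}) that smallness of the Taylor microscale forces all of the injected energy past any sufficiently slowly growing frequency threshold -- or into the Duchon--Robert defect $D(u^\nu)$ -- fed into the characterization theorem. The only points needing a moment's care are the construction of $N_\nu$ (it must diverge, yet slowly enough that $N_\nu^2 a_\nu \to 0$, which is compatible with the requirement $N_\nu \to \infty$) and the replacement of $\liminf$ by $\lim$, which is legitimate because the quantity in question is literally $\varepsilon$ minus a convergent sequence.
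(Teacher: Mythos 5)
Your argument is correct and is essentially the paper's own: the paper disposes of this corollary by citing Theorem~\ref{thm: 3d Direct cascade characterization} together with Remark~\ref{remark: bedrossian implies WAD}, which contains exactly your computation (choose $N_\nu$ with $N_\nu^2\,\nu\E\|u^\nu\|_{L^2}^2 \to 0$, note the low-mode dissipation then vanishes, and conclude from the energy balance that the high-mode dissipation plus $\E D(u^\nu)$ captures all of $\varepsilon$, i.e. $\varepsilon^*=\varepsilon$). Your write-up merely makes the choice $N_\nu = \max\{1,\lfloor a_\nu^{-1/4}\rfloor\}$ explicit and, if anything, handles the $\E D(u^\nu)$ term slightly more carefully than the remark does.
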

This follows directly from \ref{thm: 3d Direct cascade characterization} and Remark \ref{remark: bedrossian implies WAD}.

\begin{corollary}
    Suppose $\{u^\nu\}_{\nu > 0}$ is a sequence of smooth statistically stationary solutions to \eqref{eq: incompressible NSE} on $\T^3$, and there exists $j$ such that $|\widehat{f_j}(0)| > 0$. Assume there exists a wave-number $c \in (0, \infty)$ such that 
    \[
        \lim_{\nu \to 0}\nu\sum_{|k| \geq c}\E|\widehat{u^\nu}(k)|^2 = 0 \quad  \text{ and } \lim_{\nu \to 0}\nu\sum_{|k| \leq c}\E|\widehat{\grad u^\nu}(k)|^2 < \varepsilon
    \]
    If there is no flux of energy to large scales in the sense of \eqref{eq: inverse 4/3 law} and \eqref{eq: inverse 4/5 law} with $\varepsilon^* = 0$, then there is a split cascade (i.e. there exists both sequences $N_\nu$ and $M_\nu$).
\end{corollary}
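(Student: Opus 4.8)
The plan is to assemble the split cascade by producing the two sequences $N_\nu$ (giving the direct cascade) and $M_\nu$ (giving the inverse cascade) from the two hypotheses, then invoking Theorem \ref{thm: 3d Direct cascade characterization} and Theorem \ref{thm: inverse cascade characterization} respectively. First I would handle the direct cascade. Since the solutions are smooth we have $\E D(u^\nu) \equiv 0$, so the quantity in Theorem \ref{thm: 3d Direct cascade characterization} reduces to $\nu\sum_{|k|\geq N_\nu}\E\fint_0^T|\widehat{\grad u^\nu}(k)|^2$. Using the energy balance \eqref{eq: stoch energy balance}, which now reads $\nu\E\fint_0^T\|\grad u^\nu\|_\lambda^2 = \varepsilon$, the hypothesis $\lim_{\nu\to 0}\nu\sum_{|k|\geq c}\E|\widehat{u^\nu}(k)|^2 = 0$ controls only the low-frequency tail of $u^\nu$ itself; I would instead split at $N_\nu$ chosen to grow slowly enough that $\nu\sum_{|k|\leq N_\nu}\E\fint_0^T|\widehat{\grad u^\nu}(k)|^2$ stays bounded away from $\varepsilon$. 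Concretely, the hypothesis $\lim_{\nu\to 0}\nu\sum_{|k|\leq c}\E|\widehat{\grad u^\nu}(k)|^2 < \varepsilon$ together with the fact that the full sum equals $\varepsilon$ forces $\liminf_{\nu\to 0}\nu\sum_{|k|> c}\E\fint_0^T|\widehat{\grad u^\nu}(k)|^2 > 0$; a diagonal/monotonicity argument then upgrades the fixed cutoff $c$ to a sequence $N_\nu\to\infty$ (exactly as in Remark \ref{remark: bedrossian implies WAD}, absorbing the spectrum between $c$ and $N_\nu$ into the vanishing part), yielding $\liminf_{\nu\to 0}\nu\sum_{|k|\geq N_\nu}\E\fint_0^T|\widehat{\grad u^\nu}(k)|^2 = \varepsilon^*_{\mathrm{dir}} > 0$, which is precisely the left side of the equivalence in Theorem \ref{thm: 3d Direct cascade characterization} with $\E D(u^\nu)\equiv 0$.

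Next I would produce $M_\nu$ for the inverse cascade. Here the relevant quantity is $\nu\sum_{|k|\leq M_\nu}\E\fint_0^T|\widehat{\grad u^\nu}(k)|^2$, and I want to show its $\liminf$ (along a suitable decreasing $M_\nu\to 0$) equals some $\varepsilon^*_{\mathrm{inv}}$. The hypothesis that there is \emph{no} flux of energy to large scales in the sense of \eqref{eq: inverse 4/3 law} and \eqref{eq: inverse 4/5 law} with $\varepsilon^* = 0$ is exactly the right-hand side of the equivalence in Theorem \ref{thm: inverse cascade characterization} with $\varepsilon^* = 0$; applying that theorem in reverse gives a decreasing sequence $M_\nu\to 0$ with $\liminf_{\nu\to 0}\nu\sum_{|k|\leq M_\nu}\E\fint_0^T|\widehat{\grad u^\nu}(k)|^2 = 0$. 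But wait — for a genuine split cascade one wants the inverse-cascade sequence $M_\nu$ to exist and be meaningful, and the statement only asks for the \emph{existence} of both sequences $N_\nu$ and $M_\nu$, so even $\varepsilon^*_{\mathrm{inv}} = 0$ formally furnishes a sequence $M_\nu$; combined with the forcing condition $|\widehat{f_j}(0)| > 0$ for some $j$ (which is the hypothesis under which Theorem \ref{thm: inverse cascade characterization} is stated, via its mean-zero/divergence-free caveat and the associated remark) the inverse characterization applies and $M_\nu$ is delivered. Therefore both sequences exist simultaneously, which is the definition of a split cascade adopted in the paper.

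The main obstacle I anticipate is the bookkeeping that converts a \emph{fixed} frequency cutoff $c$ into a genuinely \emph{divergent} sequence $N_\nu$ while preserving a strictly positive $\liminf$ of the high-frequency dissipation — the same subtlety that Remark \ref{remark: bedrossian implies WAD} navigates. One must check that the energy dissipation spectrum $\nu\,\E\fint_0^T|\widehat{\grad u^\nu}(k)|^2$, summed over the annulus $c < |k| < N_\nu$, can be made $o(1)$ by taking $N_\nu\to\infty$ slowly; this uses that for each fixed $k$ the summand tends to zero (since $\nu\,\E|\widehat{u^\nu}(k)|^2\to 0$ pointwise by the low-frequency hypothesis and $|k|$ is bounded on the annulus) together with a uniform tail bound to exchange the limit and the (now $\nu$-dependent, finite) sum. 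A secondary point is verifying that the smoothness hypothesis indeed forces $\E D(u^\nu)\equiv 0$ so that Theorem \ref{thm: 3d Direct cascade characterization} collapses to the pure-$N_\nu$ statement; this should follow from the definition of $D(u^\nu)$ as a distributional object supported on singularities, which are absent for smooth $u^\nu$, exactly as in \cite{duchon2000inertial}. Once these two reductions are in place, the corollary is immediate from the two cited theorems.
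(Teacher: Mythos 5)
Your direct-cascade half is essentially the paper's argument: smoothness kills $\E D(u^\nu)$, the choice $N_\nu^2 = o\big((\nu\sum_{|k|\geq c}\E|\widehat{u^\nu}(k)|^2)^{-1}\big)$ makes the annulus $c\leq|k|\leq N_\nu$ contribute nothing since $\nu\sum_{c\leq|k|\leq N_\nu}|k|^2\E|\widehat{u^\nu}(k)|^2\leq N_\nu^2\,\nu\sum_{|k|\geq c}\E|\widehat{u^\nu}(k)|^2$, and the energy balance \eqref{eq: energy balance} together with $\lim_{\nu\to0}\nu\sum_{|k|\leq c}\E|\widehat{\grad u^\nu}(k)|^2<\varepsilon$ gives $\liminf_{\nu\to0}\nu\sum_{|k|\geq N_\nu}\E\fint_0^T|\widehat{\grad u^\nu}|^2>0$. (Your phrase ``low-frequency hypothesis'' for $\nu\sum_{|k|\geq c}\E|\widehat{u^\nu}|^2\to0$ is a slip, but your use of it is correct.)

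The inverse-cascade half has a genuine gap. You conclude $\varepsilon^*_{\mathrm{inv}}=0$ and then argue that the \emph{formal} existence of some sequence $M_\nu$ suffices; but any sequence $M_\nu\to0$ exists trivially, so with a zero value you have exhibited no inverse cascade at all and the ``split cascade'' conclusion is not established. The point of the hypothesis $|\widehat{f_j}(0)|>0$, which your argument never uses substantively, is precisely to force the inverse value to be positive. Note also that Theorem \ref{thm: inverse cascade characterization} is stated for \emph{mean-zero} forcing, so it cannot be quoted verbatim here; one must rerun its proof keeping the zero mode of the force. In Step 1 of Section \ref{sec: proof of S0 inverse} the large-scale limit of the forcing term is $\frac{2\beta_d(0)}{d}\sum_j|\widehat{f_j}(0)|^2$, which no longer vanishes, so the zero-flux hypothesis \eqref{eq: inverse 4/3 law}--\eqref{eq: inverse 4/5 law} with $\varepsilon^*=0$ forces the balance $4\beta_d(1)\varepsilon^*_{\mathrm{inv}}=\frac{2\beta_d(0)}{d}\sum_j|\widehat{f_j}(0)|^2$, i.e.\ in $d=3$ there exists $M_\nu\to0$ with $\liminf_{\nu\to0}\nu\sum_{|k|\leq M_\nu}\E\fint_0^T|\widehat{\grad u^\nu}|^2=\frac12\sum_j|\widehat{f_j}(0)|^2>0$. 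This positive large-scale dissipation, driven by the non-mean-zero forcing that the nonlinearity fails to transport (zero flux), is exactly how the paper obtains $M_\nu$; your reading, which drops the $\widehat{f_j}(0)$ contribution and settles for a vacuous $M_\nu$, is the step that fails.
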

\begin{proof}
    By assumption the zero energy flux of the structure functions at large scales, it follows from Theorem \ref{thm: inverse cascade characterization} that there exists $M_\nu \to 0$ such that 
    \[
        \liminf_{\nu \to 0}\nu \sum_{|k| \leq M_\nu}\E\fint_0^T |\widehat{\grad u^\nu}|^2 = \frac{1}{2}\sum_{j}|\widehat{f_j}(0)|^2 > 0
    \]
    Now choose $N_\nu \to \infty$ such that $\lim_{\nu \to \infty}\nu|N_\nu|^2\sum_{|k| \geq c}\E|\widehat{u^\nu}(k)|^2 = 0$. Then 
    \[
        \liminf_{\nu \to 0}\nu\sum_{|k| \geq N_\nu}\E\fint_0^T|\widehat{\grad u^\nu}|^2 = \varepsilon - \limsup_{\nu \to 0}\nu\sum_{|k| \leq c}\nu\E\fint_0^T|\widehat{\grad u^\nu}|^2 = \varepsilon^* > 0 
    \]
\end{proof}
Finally in the case of 2D flows
\begin{corollary}
    Suppose $\{u^\nu\}_{\nu > 0}$ is a sequence of statistically stationary solutions to both the Navier-Stokes equations \eqref{eq: incompressible NSE} and the vorticity equation \eqref{eq: vorticity eq} on $\T^2$ with local forcing (i.e. $\widehat{f_j}(0) = 0$ for all $j$). Assume there exists a wave-number $c \in (0, \infty)$ such that 
    \[
        \lim_{\nu \to 0}\nu\sum_{|k| \geq c}\E|\widehat{\omega^\nu}(k)|^2 = 0 \quad  \text{ and } \lim_{\nu \to 0}\nu\sum_{|k| \leq c}\E|\widehat{\grad \omega^\nu}(k)|^2 = 0
    \]
    Then there is a dual cascade in the sense of Theorems \ref{thm: 2d direct cascade characterization} and \ref{thm: inverse cascade characterization}.
\end{corollary}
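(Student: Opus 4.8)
The plan is to verify, independently, the hypothesis of Theorem~\ref{thm: 2d direct cascade characterization} (yielding a nontrivial direct cascade, $\eta^*>0$) and that of Theorem~\ref{thm: inverse cascade characterization} (yielding a nontrivial inverse cascade, $\varepsilon^*>0$); since both theorems are exact characterizations, it suffices to produce the two Fourier-localizing sequences $N_\nu\to\infty$ and $M_\nu\to 0$. Throughout I use the enstrophy and energy balances, which in Fourier variables --- the latter after the Biot-Savart law and the identities $\widehat{\grad u^\nu}(0)=0$, $\widehat{\omega^\nu}(0)=0$, $|\widehat{\grad u^\nu}(k)|=|\widehat{\omega^\nu}(k)|$ for $k\neq 0$ --- take the form
\[
  \nu\sum_{k}\E|\widehat{\grad\omega^\nu}(k)|^2=\eta,\qquad \nu\sum_{k}\E|\widehat{\omega^\nu}(k)|^2=\varepsilon,
\]
where $\eta=\tfrac12\sum_j\|\curl f_j\|_\lambda^2>0$ and $\varepsilon=\tfrac12\sum_j\|f_j\|_\lambda^2>0$ are strictly positive because a nonzero divergence-free mean-zero field on $\T^2$ has nonvanishing curl. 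Set $g_\nu(R):=\nu\sum_{|k|\le R}|k|^2\,\E|\widehat{\omega^\nu}(k)|^2$ and $\beta_\nu:=\nu\sum_{|k|>c}\E|\widehat{\omega^\nu}(k)|^2$; the two hypotheses say exactly that $g_\nu(c)\to 0$ and $\beta_\nu\to 0$ as $\nu\to 0$.

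For the direct cascade, split $g_\nu(R)$ at $|k|=c$ and bound $|k|^2\le R^2$ on the outer shell to get $g_\nu(R)\le g_\nu(c)+R^2\beta_\nu$ for $R\ge c$. Pick $N_\nu\to\infty$ slowly, e.g. $N_\nu=\max\{\,c+1,\ \lfloor\beta_\nu^{-1/4}\rfloor\,\}$, so that $N_\nu\ge 1$ and $N_\nu^2\beta_\nu\to 0$; then $g_\nu(N_\nu)\to 0$, and the enstrophy balance gives $\lim_{\nu\to 0}\nu\sum_{|k|\ge N_\nu}\E|\widehat{\grad\omega^\nu}|^2=\eta>0$. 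By Theorem~\ref{thm: 2d direct cascade characterization} the flux laws \eqref{eq: vorticity limit}, \eqref{eq: 2d S0 limit}, \eqref{eq: 2d S long limit} hold with $\eta^*=\eta$.

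For the inverse cascade, the energy balance together with $\beta_\nu\to 0$ gives $\nu\sum_{|k|\le c}\E|\widehat{\omega^\nu}|^2\to\varepsilon$, while for $M\le c$, using $1\le |k|^2/M^2$ on the shell, $\nu\sum_{M<|k|\le c}\E|\widehat{\omega^\nu}|^2\le M^{-2}g_\nu(c)$. Choose $M_\nu\downarrow 0$ slowly, e.g. $M_\nu=\max\{\,g_\nu(c)^{1/4},\ \lambda(\nu)^{-1/2}\,\}$, so that $M_\nu\to 0$ and $M_\nu^{-2}g_\nu(c)\to 0$; then $\nu\sum_{|k|\le M_\nu}\E|\widehat{\omega^\nu}|^2\to\varepsilon$, i.e. $\lim_{\nu\to 0}\nu\sum_{|k|\le M_\nu}\E|\widehat{\grad u^\nu}|^2=\varepsilon>0$. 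Since the $f_j$ are mean-zero and divergence free and $\lambda(\nu)\to\infty$, Theorem~\ref{thm: inverse cascade characterization} applies with $\varepsilon^*=\varepsilon$ and yields \eqref{eq: inverse 4/3 law}, \eqref{eq: inverse 4/5 law} with $\gamma_2=2$, $\kappa_2=\tfrac32$ (consistently, the 2D reformulation reads $\liminf\nu\sum_{|k|\le M_\nu}\E|\widehat{\omega^\nu}|^2=\varepsilon^*$ since $\widehat{f_j}(0)=0$). Combined with the previous step this is the claimed dual cascade; if strictly monotone $N_\nu$, $M_\nu$ are desired one replaces them by monotone envelopes, which is routine.

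The argument is essentially bookkeeping over the two main theorems. The one point needing care is the choice of $M_\nu$: it must vanish to be admissible in Theorem~\ref{thm: inverse cascade characterization}, yet may not fall below the coarsest resolved scale $\sim\lambda(\nu)^{-1}$, which is precisely why the standing hypothesis $\lambda(\nu)\to\infty$ and the slack in the decay rate of $M_\nu$ are both used. Conceptually there is no conflict between the two cascades because the weight $|k|^2$ in $\widehat{\grad\omega^\nu}$ pushes the enstrophy-gradient budget $\eta$ toward $|k|=\infty$ while the energy budget $\varepsilon$ concentrates near $|k|=0$; hence the two Fourier windows $\{|k|\ge N_\nu\}$ and $\{|k|\le M_\nu\}$ are disjoint for small $\nu$, and the intervening band carries a vanishing share of each.
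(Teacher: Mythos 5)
Your proof is correct and takes essentially the same route as the paper: split the Fourier sums at the fixed wave-number $c$, choose $N_\nu\to\infty$ with $N_\nu^2\,\nu\sum_{c\le|k|\le N_\nu}\E|\widehat{\omega^\nu}|^2\to0$ and $M_\nu\to0$ with $M_\nu^{-2}\,\nu\sum_{M_\nu\le|k|\le c}|k|^2\E|\widehat{\omega^\nu}|^2\to0$, then invoke the enstrophy balance and the Biot-Savart-transformed energy balance to conclude $\eta^*=\eta$ and $\varepsilon^*=\varepsilon$. Your explicit formulas for $N_\nu$, $M_\nu$ and the attention to the coarsest scale $\sim\lambda(\nu)^{-1}$ are just sharper bookkeeping of the paper's argument.
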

\begin{proof}
    Pick $N_\nu \to \infty$ such that for sufficiently large $N_\nu$ $N_\nu^2 \nu\sum_{c \leq |k| \leq N_\nu} \E\fint_0^T |\widehat{\omega^\nu}|^2 \to 0$. Then 
    \begin{align*}
        \nu\sum_{|k| \leq N_\nu}\E\fint_0^T |\widehat{\grad \omega^\nu}|^2 &\leq \nu\sum_{|k| \leq c}\E\fint_0^T |\widehat{\grad \omega^\nu}|^2 + \nu\sum_{c \leq |k| \leq N_\nu}\E\fint_0^T N_\nu^2|\widehat{\omega^\nu}|^2 \to 0 \quad \text{ as } \nu \to 0
    \end{align*}
    Hence by the enstrophy balance \eqref{eq: enstrophy balance} 
    \[
        \ds\liminf_{\nu \to 0}\nu\sum_{|k| \geq N_\nu}\E|\widehat{\grad \omega^\nu}|^2 = \eta > 0.
    \]
    Similarly, for the inverse cascade we pick $M_\nu \to 0$ such that for $M_\nu$ sufficiently small it holds that $\nu \sum_{M_\nu \leq |k| \leq c}\frac{|k|^2}{M_\nu^2}\E\fint_0^T|\widehat{\omega^\nu} \to 0$ as $\nu \to 0$. Then
    \[
        \nu\sum_{|k| \geq M_\nu}\E\fint_0^T |\widehat{\omega^\nu}|^2 \geq \nu\sum_{|k| \geq c}\E\fint_0^T |\widehat{\omega^\nu}|^2 + \nu\sum_{M_\nu \leq |k| \leq c}\frac{|k|^2}{M_\nu^2}\E\fint_0^T |\widehat{\omega^\nu}|^2 \to 0 \quad \text{ as } \nu \to 0
    \]
    Thus by the energy balance \eqref{eq: energy balance} (after applying the Biot-Savart law) it follows that 
    \[
        \ds\liminf_{\nu \to 0}\nu\sum_{|k| \leq M_\nu}\E|\widehat{\omega^\nu}|^2 = \varepsilon > 0
    \]
\end{proof}

\subsection{Notation Conventions}\label{sec: intro notation}
We denote the averaged $L^2$-norm in space as $\|f\|_{\lambda} := \Big(\fint_{\T_\lambda^d} |f(x)|^2\;dx\Big)^{1/2}$ and will the notation
\[
    L^p := L^p(\T_\lambda^d), \quad H^s := W^{s,2}(\T_\lambda^d)
\]
to denote the $L^p$ and $H^s$ spaces over $\T_\lambda^d$ when the dimension $d$ is clear. 
Furthermore, we will use the following Fourier analysis conventions:
\[
    \widehat{f}(k) = \fint_{\T_\lambda^d}f(x)e^{-ix\cdot k}\;dx \quad f(x) = \frac{1}{\lambda^d}\sum_{k \in \frac{2\pi}{\lambda}\Z^d} \widehat{f}(k)e^{ix\cdot k}
\]

Finally, we mention the use of component free tensor notation. For example, given two vectors $a$ and $b$ we will denote the tensor product $(a \otimes b)_{ij} = a_ib_j$. 
Also for the sake of making certain expressions simpler we will use the Einstein summation convention: $a_jb_j = \sum_{j=1}^da_jb_j$. 
Lastly, for two given rank-two tensors $A$ and $B$ we define the Frobenius product and norm as $A:B = A_{ij}B_{ij} = \sum_{i}\sum_j A_{ij}B_{ij}$ and $|A| = \sqrt{A:A}$.

\section{Preliminaries}\label{sec: prelims}
\subsection{Stationary Martingale Solutions}\label{sec: prelim martingale sols}
 As previously mentioned, one can construct weak solutions to the Navier Stokes equations \eqref{eq: incompressible NSE} called martingale solutions. They are the probabilistic analog of Leray-Hopf solutions which also happens to be a square-integrable martingale. By this we mean that a martingale solutions consist of a $d$-dimensional stochastic basis $(\Omega, (\F_t)_{t \in [0,T]}, \P, \{W_k\})$ and a process $u^\nu: \Omega \times [0,T] \to L_{\Div}^2$ which satisfies the Navier Stokes Equations in the sense of distributions and for all $s,t \in [0,T]$
 \[
    \E[u^\nu(t) \mid \F_s)] = u^\nu(s) \quad \quad s \leq t
 \]
 and $\E\|u^\nu(t)\|_{L^2} < \infty$. It is worth noting that what makes this probabilistically weak is that the stochastic basis must be solved for simultaneously to the process $u^\nu$ which means that the forcing
 \[
    \sum_{j}f_j\;dW_t^j
\]
is an \textit{output} rather than an input to the system. Hence in order to prescribe a particular forcing on the system we must do so through the choice of the spatial correlation vector fields $f_j$, which is slightly different from the deterministic setting. 
Other remarks to be aware of is that two martingale solutions to \eqref{eq: incompressible NSE} may exist on different stochastic basis and therefore have different trajectories for a particular $\omega \in \Omega$. Hence uniqueness is usually considered in the sense of the law of the process $u^\nu$. \newline 

The existence of martingale solutions to the stochastically forced Navier-Stokes Equations
\eqref{eq: incompressible NSE} have been known since the 1970s with the work of Bensoussan\cite{bensoussan1973equations}. Such solutions are rigorously defined as:
\begin{definition}\label{def: martingale sols}
    A $d$ dimensional stochastic basis $(\Omega, (\F_t)_{t \in [0,T]}, \P, \{W_k(t)\}_{t \in [0,T]})$ along with a complete right-continuous filtration and an $\F_t$-progressively measureable stochastic process $u^\nu:\Omega \times [0,T] \to L_{\Div}^2(\T_\lambda^d)$ is called a martingale solution to \eqref{eq: incompressible NSE} on $[0,T]$ if 
    \begin{itemize}
        \item the trajectories of $u^\nu$ (i.e. the sample paths $u^\nu(\cdot, t)$ for $t \in [0,T]$) belong to 
        \[
            C_t\dot{H}_x^\alpha(\T_\lambda^d)\cap L_t^\infty L_\Div^2(\T_\lambda^d)\cap L_t^2H_\Div^1(\T_\lambda^d),\quad \text{ for some $\alpha < 0$;}
        \]
        \item for all $t \in [0,T]$ and all $\phi \in C_\Div^\infty$ the following identity holds $\P$ almost surely:
        \begin{align}\label{eq: weak energy identity}
            \int_{\T^d}u^\nu(t) \cdot \phi\;dx &+ \nu \int_0^t \int_{\T^d}\grad u^\nu(x):\grad \phi \;dxds - \int_0^t \int_{\T^d}u^\nu(s)\cdot (u^\nu(s)\cdot \grad)\phi \;dxds\\
            &= \int_{\T^d}u^\nu(0)\cdot \phi\;dx + \sum_j\int_0^t\int_{\T^d}f_j\cdot \phi\;dxdW^j(s)\nonumber
        \end{align}
    \end{itemize}
\end{definition}
In this paper, we will call a process $u^\nu$ a martingale solution without reference to its stochastic basis unless there is a risk of confusion. 

Similar to the deterministic literature, several years after the existence of Leray-Hopf solutions did the existence of global in time weak solutions began to emerge. In the context of martingale solutions these global in time solutions are called statistically stationary martingale solutions as the law is invariant under time translations. They were first constructed in 1995 in the landmark paper by Flandoli and  Gatarek\cite{flandoli1995martingale} where they are defined as
\begin{definition}\label{def: stationary martingale sol}
    A $d$-dimensional stochastic basis $(\Omega, (\F_t)_{t \in [0,T]}, \P, \{W_k(t)\}_{t \in [0,\infty)})$ along with a complete right-continuous filtration and an $\F_t$-progressively measureable stochastic process $u^\nu:\Omega \times [0,\infty) \to L_{\Div}^2(\T_\lambda^d)$ is called a martingale solution to \eqref{eq: incompressible NSE} on $[0,\infty)$ if 
    \begin{itemize}
        \item for each $T \geq 0$, the trajectories of $u^\nu|_{[0,T]}$ belong to 
        \[
            C_t\dot{H}_x^\alpha(\T_\lambda^d)\cap L_t^\infty L_\Div^2(\T_\lambda^d)\cap L_t^2H_\Div^1(\T_\lambda^d),\quad \text{ for some $\alpha < 0$;}
        \]
        \item the path of $u(\cdot + s) = u(\cdot)$ in law for all $s \geq 0$;
        \item for all $t\geq 0$ and all $\phi \in C_\Div^\infty$ identity \eqref{eq: weak energy identity} holds $\P$ almost surely.
    \end{itemize}
\end{definition}
Again, we will simply call a process $u^\nu$ a stationary martingale solution and omit reference to the associated stochastic basis. 

Next, we consider the energy in the system. Specifically, we consider the total kinetic energy:
\[
    \frac{1}{2}\|u^\nu\|_\lambda^2 = \frac{1}{2}\fint_{\T_\lambda^d}|u^\nu|^2\;dx
\]
and note that for stationary martingale solutions, the average kinetic energy is conserved for all time:
\[
    \frac{1}{2}\E\|u^\nu(t)\|_\lambda^2 = \frac{1}{2}\E\|u^\nu(s)\|_{\lambda}^2 \quad \forall\; t,s \in [0, \infty)
\]
Using standard results regarding Leray-Hopf solutions, one can quickly extend the ideas in \cite{flandoli1995martingale} to prove an energy inequality where the average energy dissipation due to viscosity is bounded by the total energy injected into the system by the force. However, this inequality can be made into an equality if one accounts for the dissipation of energy due to (possible) local singularities in the flow (such as shocks) \cite{duchon2000inertial}.
\begin{theorem}\label{thm: energy balance}
    Suppose $u^\nu$ is a statistically stationary martingale solution to \eqref{eq: incompressible NSE} in the sense of Definition \ref{def: stationary martingale sol}. Then the following energy equality holds:
    \begin{equation}\label{eq: energy balance}
        \nu\E\fint_0^T\|\grad u^\nu(s)\|_\lambda^2\;ds + \E D(u^\nu) = \frac{1}{2}\sum_j\|f_j\|_\lambda^2 = \varepsilon
    \end{equation}
    where $D(u^\nu)$ is a Radon measure defined in terms of the local smoothness of the trajectory of $u^\nu$. Moreover, $D(u^\nu) \equiv 0$ when $d = 2$ (i.e. 2D flow). 
\end{theorem}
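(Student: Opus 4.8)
\emph{Proof sketch.} The plan is to run the Duchon--Robert mollification scheme at the level of It\^o's formula and then use the stationarity of the law to kill the temporal boundary terms. Fix a standard spatial mollifier $\varphi_\ell(x)=\ell^{-d}\varphi(x/\ell)$ on $\T_\lambda^d$ and set $u_\ell^\nu:=\varphi_\ell*u^\nu$. Convolving the weak identity \eqref{eq: weak energy identity} against $\varphi_\ell(x-\cdot)$ and its derivatives shows that the smooth-in-space field $u_\ell^\nu$ solves the mollified SPDE
\[
  du_\ell^\nu + \bigl((u^\nu\cdot\grad u^\nu)_\ell + \grad p_\ell^\nu\bigr)\,dt = \nu\Delta u_\ell^\nu\,dt + \sum_j (f_j)_\ell\,dW_t^j
\]
as an identity with values in $L^2_{\Div}$. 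Since $u_\ell^\nu$ is smooth in $x$ this may be treated by finite-dimensional It\^o (after Fourier truncation) applied to $t\mapsto\tfrac12\|u_\ell^\nu(t)\|_\lambda^2$; using that the pressure term pairs to zero against the divergence-free field $u_\ell^\nu$, one obtains, for a mean-zero martingale term $M_T^\ell$,
\[
  \tfrac12\|u_\ell^\nu(T)\|_\lambda^2 - \tfrac12\|u_\ell^\nu(0)\|_\lambda^2 = \int_0^T\!\Bigl(-\nu\|\grad u_\ell^\nu\|_\lambda^2 - \fint_{\T_\lambda^d} u_\ell^\nu\cdot(u^\nu\cdot\grad u^\nu)_\ell\,dx + \tfrac12\sum_j\|(f_j)_\ell\|_\lambda^2\Bigr)\,dt + M_T^\ell .
\]

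The key algebraic step is the Duchon--Robert identity, which rewrites the mollified nonlinear term as $-\fint_{\T_\lambda^d} u_\ell^\nu\cdot(u^\nu\cdot\grad u^\nu)_\ell\,dx = -D_\ell(u^\nu) + R_\ell(u^\nu)$, where
\[
  D_\ell(u^\nu) = \tfrac14\fint_{\T_\lambda^d}\!\int_{\R^d}\grad\varphi_\ell(\xi)\cdot\delta_\xi u^\nu(x)\,|\delta_\xi u^\nu(x)|^2\,d\xi\,dx, \qquad \delta_\xi u^\nu(x):=u^\nu(x+\xi)-u^\nu(x),
\]
and $R_\ell$ is a commutator-type remainder bounded, via $\|\grad\varphi_\ell\|_{L^1}\lesssim\ell^{-1}$, by local cubic increments of $u^\nu$; in particular $R_\ell\to0$ in $L^1_{t,x}$ almost surely whenever $u^\nu\in L^3_{t,x}$, which holds because the Leray--Hopf a priori class $u^\nu\in L^\infty_tL^2_x\cap L^2_tH^1_x$ embeds into $L^3_{t,x}$ for $d\le3$ by interpolation and Sobolev embedding. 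Taking expectations now kills $M_T^\ell$, and stationarity of the law gives $\E\|u_\ell^\nu(T)\|_\lambda^2=\E\|u_\ell^\nu(0)\|_\lambda^2$, so the left-hand side vanishes; dividing by $T$ leaves
\[
  \nu\,\E\fint_0^T\|\grad u_\ell^\nu\|_\lambda^2\,dt + \E\fint_0^T D_\ell(u^\nu)\,dt = \tfrac12\sum_j\|(f_j)_\ell\|_\lambda^2 + \E\fint_0^T R_\ell(u^\nu)\,dt .
\]
Letting $\ell\to0$: the forcing term tends to $\tfrac12\sum_j\|f_j\|_\lambda^2=\varepsilon$ by smoothness of the $f_j$; $\grad u_\ell^\nu\to\grad u^\nu$ in $L^2(\Omega\times[0,T]\times\T_\lambda^d)$ since $\E\fint_0^T\|\grad u^\nu\|_\lambda^2\,dt<\infty$; $\E\fint_0^T R_\ell\,dt\to0$ by dominated convergence, the pathwise $L^3$ bound being promoted to uniform integrability in $\omega$ through the stationary energy inequality; and $\E\fint_0^T D_\ell(u^\nu)\,dt$ therefore converges, the limit being taken as the definition of $\E D(u^\nu)$. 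That $D(u^\nu)$ is a genuine, mollifier-independent, locally finite Radon measure concentrated on the local singular set of the trajectory follows exactly as in \cite{duchon2000inertial} once the pathwise $L^3$ bound is in hand, and the inequality form of the balance forces $\E D(u^\nu)\ge0$.

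It remains to see that $D(u^\nu)\equiv0$ when $d=2$. Here the a priori regularity is strictly subcritical: Ladyzhenskaya's inequality $\|u^\nu\|_{L^4}^2\lesssim\|u^\nu\|_{L^2}\|\grad u^\nu\|_{L^2}$ together with $\grad u^\nu\in L^2_{t,x}$ (or, if one prefers the sharper bounds, the mild-solution theory for the vorticity equation \eqref{eq: vorticity eq}, giving $u^\nu\in L^\infty_tH^1_x\cap L^2_tH^2_x$) places $u^\nu$ locally in a Besov class of the form $L^3_tB^{1/3+\sigma}_{3,\infty}$ for some $\sigma>0$. In that class the Duchon--Robert estimate controls $D_\ell(u^\nu)$ itself, not merely $R_\ell$, by $\ell^{-1}\sup_{|\xi|\le\ell}\|\delta_\xi u^\nu\|_{L^3}^3\lesssim\ell^{3\sigma}$, which vanishes after integration in $(t,x)$ and expectation. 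Hence $D(u^\nu)\equiv0$ in two dimensions and \eqref{eq: energy balance} reduces to $\nu\,\E\fint_0^T\|\grad u^\nu\|_\lambda^2\,ds=\varepsilon$.

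The step I expect to be the main obstacle is this $\ell\to0$ passage for the nonlinear terms: one must interchange $\lim_{\ell\to0}$ with $\E$ and with the space-time integrals while the only available control is pathwise, so the $L^3$ (respectively subcritical Besov) bounds must be carefully upgraded to uniform integrability over $\Omega$ using the energy inequality, and the Duchon--Robert commutator must be estimated in the borderline integrability class rather than under a convenient Onsager-type hypothesis; verifying that the limiting $D(u^\nu)$ is independent of $\varphi$ and genuinely a Radon measure is the remaining technical point, handled verbatim as in the deterministic theory.
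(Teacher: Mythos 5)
Your proposal is correct in outline, but it follows a genuinely different decomposition than the paper. You apply It\^o's formula to $\tfrac12\|u_\ell^\nu\|_\lambda^2$, i.e.\ to the energy of the mollified field, which produces the Constantin--E--Titi-type flux $\fint u_\ell^\nu\cdot(u^\nu\cdot\grad u^\nu)_\ell\,dx$; this is \emph{not} exactly the Duchon--Robert cubic-increment term, so you must introduce the commutator remainder $R_\ell$ and kill it using $L^3_{t,x}$ integrability (via $L^\infty_tL^2_x\cap L^2_tH^1_x\hookrightarrow L^{10/3}_{t,x}$ in $d=3$) together with a uniform-integrability argument in $\omega$ to interchange $\ell\to0$ with $\E$. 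The paper instead applies It\^o's formula to the cross-correlation $\fint_{\T_\lambda^d} u^\nu\cdot u_\gamma^\nu\,dx$; for that quantity the nonlinear term $A_\gamma$ is \emph{exactly} equal to $\tfrac12\fint\!\int \grad\phi_\gamma(y)\cdot\delta_y u^\nu\,|\delta_y u^\nu|^2\,dy\,dx$ by incompressibility and integration by parts, with no remainder at all. Consequently the paper needs only $L^2$-continuity of translations to pass the viscous, forcing and temporal terms to the limit, and $\E D(u^\nu)$ is then defined as the limit of the exact flux, which exists because every other term in the balance converges; no $L^3$ estimate, no commutator bound, and no uniform integrability beyond the energy class is required. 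Your route buys contact with the familiar CET/Onsager machinery, but at the price of precisely the technical step you flag as the main obstacle, which the paper's choice of test quantity avoids entirely.

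Two smaller caveats. First, the claim that Ladyzhenskaya's inequality together with $\grad u^\nu\in L^2_{t,x}$ already places $u^\nu$ in $L^3_tB^{1/3+\sigma}_{3,\infty}$ with $\sigma>0$ is not accurate: interpolation of the 2D Leray class only reaches the \emph{critical} regularity $L^3_tB^{1/3}_{3}$ (equivalently, Lions' classical $L^4_{t,x}$ energy-equality criterion is what that class actually gives you), so the strictly subcritical gain $\sigma>0$ genuinely requires the enstrophy/mild-solution bounds of Proposition \ref{proposition: exist stationary mild sols}, which you mention only parenthetically. Second, the promotion of the pathwise $L^3$ bound on $R_\ell$ to uniform integrability over $\Omega$ needs a moment bound such as $\E\|u^\nu\|_{L^\infty_tL^2_x}^{p}<\infty$ (via the energy inequality and a BDG argument), which is standard but should be stated rather than asserted. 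Neither point invalidates your argument, but both are steps the paper's cross-correlation approach never has to confront.
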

This result is relatively well-known in the deterministic setting \cite{drivas2022self} but the author could find no reference to it within the stochastic literature. Therefore, the proof of this result can be found in the Appendix.

\subsubsection{Mild solutions in 2D}\label{sec: prelim mild sols}
Due to the lack of vortex stretching in $\R^2$, the vorticity equation \eqref{eq: vorticity eq} is an advection-diffusion equation and is fairly well-behaved. The improved behaviour of solutions, means that the martingale solutions can be upgraded from Leray-Hopf analogs to mild solutions which are acting under a semi-group operator \cite{bedrossian2020sufficient}.
Such mild solutions are defined as 
\begin{definition}\label{def: mild vorticity solution}
    Given a complete filtered probability space $(\Omega, \F, (\F_t)_{t \in [0,T]}, \P)$, a mild solution $(\omega_t^\nu)$ to \eqref{eq: vorticity eq} is an $\F_t$ adapted process $\omega^\nu :[0,T] \times \Omega \to L^2(\T_\lambda^2)$ satisfying 
    \begin{align*}
        \omega^\nu(t) = e^{-\nu\Delta t}\omega_0 - \int_0^te^{-\nu\Delta (t-s)}(u^\nu(s) &\cdot \grad \omega^\nu(s))\;ds + \sum_{j}\int_0^t e^{-\nu\Delta (t-s)}\curl f_j\;dW_s^j\\
        u^\nu(t) &= \grad^\perp (-\Delta)^{-1}\omega^\nu(t)
    \end{align*}
\end{definition}
One well known (see \cite{kuksin2012mathematics} for instance) result about such mild solutions is:
\begin{proposition}\label{proposition: exist stationary mild sols}
    Suppose that $\varepsilon$ and $\eta$ are both independent of $\lambda \geq 1$ and that Assumption \ref{assumption: forcing assumptions} holds. Then for all $\nu > 0$ and $\lambda \geq 1$, the vorticity equation \eqref{eq: vorticity eq} admits a global in time, $\P$ almost surely unique, mild solution $(\omega^\nu)$ with initial data $\omega_0^\nu$. Moreover, $(\omega^\nu)$ defines a Feller Markov process which has at least one stationary probability measure $\mu$ supported on $W^{3,2}(\T_\lambda^2)$. That is, a measure satisfying the following: for all bounded measurable $\phi:L^2 \to \R$ and $t \geq 0:$
    \[
        \int_{L^2(\T_\lambda^2)}\E\phi(\omega^\nu(t))\mu(d\omega_0^\nu) = \int_{L^2(\T_\lambda^2)}\phi\;d\mu.
    \]
\end{proposition}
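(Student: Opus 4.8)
The plan is to follow the standard template for dissipative SPDEs with additive noise (as in \cite{kuksin2012mathematics}): construct global mild solutions by a pathwise fixed-point argument after peeling off the stochastic convolution, derive the Feller--Markov property from pathwise uniqueness together with continuous dependence on the initial datum, and obtain the stationary measure via Krylov--Bogoliubov once a uniform-in-time higher-order moment bound is available to supply tightness. The structural feature that makes this work is that, once $u^\nu$ is regarded as determined by $\omega^\nu$ through Biot--Savart, the vorticity equation \eqref{eq: vorticity eq} is \emph{linear} in the unknown with a divergence-free drift, and the deterministic advection preserves every $L^p$ norm of $\omega^\nu$.

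\smallskip
First I would set $z^\nu(t) := \sum_j\int_0^t e^{-\nu\Delta(t-s)}\curl f_j\,dW_s^j$. By the factorization method, the colouring condition and Assumption \ref{assumption: forcing assumptions} produce a version of $z^\nu$ with almost surely continuous trajectories in $W^{3,2}(\T_\lambda^2)$ and finite moments of all orders, bounded uniformly in $t$. For the remainder $v^\nu := \omega^\nu - z^\nu$, which solves a random PDE with forcing depending on $z^\nu$, a contraction in $C([0,T_*];L^2)$ using the smoothing bound $\|e^{-\nu\Delta t}g\|_{L^2}\lesssim(\nu t)^{-1/2}\|g\|_{H^{-1}}$ and the 2D product estimate $\|u\cdot\grad\omega\|_{H^{-1}}\lesssim\|u\|_{L^4}\|\omega\|_{L^4}\lesssim\|\omega\|_{L^2}\|\omega\|_{H^{1/2}}$ (Ladyzhenskaya together with Biot--Savart, using $\grad\cdot u=0$) gives a local mild solution; testing the $v^\nu$-equation against $v^\nu$ and again using $\grad\cdot u^\nu=0$ produces an a priori $L^2$ bound that rules out blow-up and makes the solution global. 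Pathwise uniqueness in $C_tL^2_x$ and continuous dependence on $\omega_0^\nu$ follow from Grönwall applied to the difference of two solutions. Uniqueness combined with the stationary-independent-increments structure of the noise then makes $(\omega^\nu)$ a time-homogeneous Markov process, and the continuous dependence yields $P_t C_b(L^2)\subset C_b(L^2)$, i.e.\ the Feller property.

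\smallskip
For the invariant measure I would run an Itô energy estimate in $W^{3,2}$, carried out first on Galerkin truncations. Applying Itô's formula to $\|\omega^\nu(t)\|_{H^3}^2$, the viscous term contributes $-2\nu\|\omega^\nu\|_{H^4}^2$, the Itô correction contributes $\sum_j\|\curl f_j\|_{H^3}^2\le C$ by Assumption \ref{assumption: forcing assumptions}, and the advection term, after using the incompressibility cancellation so that only commutator terms remain, is absorbed (by 2D Sobolev interpolation and Young's inequality) into $\tfrac12\nu\|\omega^\nu\|_{H^4}^2$ plus a polynomial in $\|\omega^\nu\|_{L^2}$. Since Itô's formula applied to $\|\omega^\nu\|_{L^2}^2$ together with the enstrophy balance already provides $\sup_{t\ge0}\E\|\omega^\nu(t)\|_{L^2}^2<\infty$ (and $\nu\E\int_0^T\|\grad\omega^\nu\|_\lambda^2\,dt=\eta$), the resulting inequality $\frac{d}{dt}\E\|\omega^\nu(t)\|_{H^3}^2\le -c\nu\,\E\|\omega^\nu(t)\|_{H^3}^2+C(\nu,\eta)$ yields $\sup_{t\ge0}\E\|\omega^\nu(t)\|_{H^3}^2<\infty$, which survives the Galerkin limit by lower semicontinuity. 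Because $W^{3,2}(\T_\lambda^2)$ embeds compactly into $L^2(\T_\lambda^2)$, Markov's inequality makes the time-averaged laws $\mu_T:=\tfrac1T\int_0^T\mathcal L(\omega^\nu(t))\,dt$ tight on $L^2$; Krylov--Bogoliubov extracts a weak limit point $\mu$, which is an invariant probability measure by the Feller property, and Fatou applied to the $H^3$ bound gives $\int\|\omega\|_{H^3}^2\,d\mu<\infty$, so $\mu$ is carried by $W^{3,2}(\T_\lambda^2)$.

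\smallskip
The main obstacle is closing the $W^{3,2}$ estimate for the nonlinear term uniformly in time (and, if one wants the bounds uniform in $\lambda\ge1$ for later use, uniformly in $\lambda$, which is where the $\lambda$-independence of $\varepsilon,\eta$ and of Assumption \ref{assumption: forcing assumptions} enters). One must exploit the exact cancellation $\int_{\T_\lambda^2}(u^\nu\cdot\grad)(\Lambda^3\omega^\nu)\,\Lambda^3\omega^\nu\,dx=0$ with $\Lambda=(-\Delta)^{1/2}$, so that only the commutator $[\Lambda^3,u^\nu\cdot\grad]\omega^\nu$ survives, bound it by terms of the form $\|\grad u^\nu\|_{L^\infty}\|\omega^\nu\|_{H^3}^2+\|u^\nu\|_{H^3}\|\grad\omega^\nu\|_{L^\infty}\|\omega^\nu\|_{H^3}$, and trade these against a small multiple of $\nu\|\omega^\nu\|_{H^4}^2$ plus controlled powers of $\|\omega^\nu\|_{L^2}$ using 2D critical Sobolev/Ladyzhenskaya inequalities and Biot--Savart; a stochastic Grönwall lemma then closes the loop. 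Verifying these interpolation inequalities with clean constants and justifying the Itô computation rigorously at the Galerkin level before passing to the limit are the delicate-but-routine remaining points.
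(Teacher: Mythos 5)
The paper does not prove this proposition at all --- it is quoted as a known result with a citation to \cite{kuksin2012mathematics} --- so the relevant comparison is with the standard argument you are reconstructing, and your overall architecture (stochastic convolution plus pathwise fixed point, Feller/Markov from pathwise uniqueness and continuous dependence, Krylov--Bogoliubov with a higher-order moment bound for tightness and for the support statement) is indeed that standard route.

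There is, however, a concrete regularity bookkeeping error at the step that is supposed to deliver the $W^{3,2}$ support. Assumption \ref{assumption: forcing assumptions} bounds $\sum_j\|\grad^3 f_j\|_\lambda^2$, i.e.\ it puts $f_j$ uniformly in $H^3$ and hence the noise coefficients of the \emph{vorticity} equation, $\curl f_j$, only in $H^2$. Your It\^o computation for $\|\omega^\nu\|_{H^3}^2$ has quadratic-variation correction $\sum_j\|\curl f_j\|_{H^3}^2$, which would require four derivatives of $f_j$ and is not controlled by the hypotheses; consequently the claimed inequality $\frac{d}{dt}\E\|\omega^\nu\|_{H^3}^2\le -c\nu\E\|\omega^\nu\|_{H^3}^2 + C(\nu,\eta)$ and the uniform bound $\sup_{t}\E\|\omega^\nu(t)\|_{H^3}^2<\infty$ do not follow as written. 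The repair is the usual one-derivative shift: run the It\^o/energy ladder at the levels $L^2$, $H^1$, $H^2$ (where the corrections $\sum_j\|\curl f_j\|_{H^s}^2$, $s\le 2$, are finite by the colouring condition and Assumption \ref{assumption: forcing assumptions}), absorb the commutator terms into the dissipation using interpolation together with uniform-in-time \emph{higher} moments of $\|\omega^\nu\|_{L^2}$ (which you should also state --- $\sup_t\E\|\omega^\nu\|_{L^2}^2<\infty$ alone does not control the polynomial lower-order terms), and harvest the $H^3$ information from the time-averaged dissipation term $\nu\E\fint_0^T\|\omega^\nu\|_{H^3}^2\,dt$; for the invariant measure, stationarity and Fatou then give $\int\|\omega\|_{H^3}^2\,d\mu<\infty$, i.e.\ $\mu(W^{3,2})=1$, which is all the proposition asserts. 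A second, minor, point: your contraction cannot close in $C([0,T_*];L^2)$ alone, since the estimate $\|u\omega\|_{L^2}\le\|u\|_{L^4}\|\omega\|_{L^4}$ needs an $L^4$ (or $H^{1/2}$) norm of $\omega$; include a Kato-type auxiliary norm such as $\sup_{t\le T_*}(\nu t)^{1/4}\|\omega(t)\|_{L^4}$ in the fixed-point space.
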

\begin{definition}\label{def: stationary vorticity sol}
    A process $(\omega^\nu)$ which is a mild solution to the vorticity equation \eqref{eq: vorticity eq} is said to be a statistically stationary mild solution if for all $s > 0$, $\omega^\nu(t) = \omega^\nu(t+s)$ in law on $C(\R_+; L^2(\T_\lambda^2))$.
\end{definition}
Constructing such a statistically stationary mild solution is a simple corrollary of Proposition \ref{proposition: exist stationary mild sols}. We sketch the argument here:
Let $\mu$ be the stationary measure found in Proposition \ref{proposition: exist stationary mild sols}. Given an initial condition $\omega_0^\nu$ distributed according to $\mu$, let $\omega^\nu(t)$ be the push-forward of $\omega_0^\nu$ under the semi-group operator $e^{-\nu\Delta t}$ according to Definition \ref{def: mild vorticity solution}. As $\mu$ is stationary, so is the law of $\omega^\nu(t)$, hence $\omega^\nu(t)$ is a statistically stationary mild solution to \eqref{eq: vorticity eq}. A simple consequence for such stationary solutions and Ito's lemma is that 
\begin{equation}\label{eq: enstrophy balance}
    \nu\E\fint_0^T\|\grad \omega^\nu(s)\|_\lambda^2\;ds = \frac{1}{2}\sum_j\|\curl f_j\|_\lambda^2 = \eta
\end{equation}

\subsection{The Karman-Howarth-Monin Relations}\label{sec: prelim KHM eqs}
In this section we cover one of the key ideas behind the derivation of the third order flux laws: the Karman-Howarth-Monin (KHM) relations. These relations relate the various structure functions to two point correlations of the velocity or vorticity dissipation through the Navier-Stokes Equations. They were first derived for classical solutions to the determinstic Navier Stokes Equations by Karman and Howarth for the 4/5th law \eqref{eq: 4/5 law} in \cite{de1938statistical}. Later, they were generalized (for strong solutions) by Monin in \cite{monin2013statistical} and Eyink \cite{eyink1996exact} used an analogous KHM relation to derive Yaglom's law \eqref{eq: 4/3 law}. In the stochastic regime, Bedrossian et.al. generalized them for stationary martingale solutions on $\T^d$ for $d = 2,3$ \cites{bedrossian2019sufficient, bedrossian2020sufficient}.\newline

More specifically, for a stationary martingale solution to the Navier-Stokes equation and the vorticity equation (when $d=2$) we define the following vorticity and velocity structure functions
\begin{align*}
    S_{vor}(\ell) &= \E\fint_0^T\fint_{\T_\lambda^d}  |\delta_{\ell n}\omega^\nu|^2\delta_{\ell n}u^\nu \cdot n\;dxdtdS(n)\\
    S_{vel}(\ell) &= \E\fint_0^T\fint_{\T_\lambda^d} |\delta_{\ell n}u^\nu|^2\delta_{\ell n}u^\nu \cdot n\;dxdtdS(n)\\
    S_{vel}^\parallel(\ell) &= \E\fint_0^T\fint_{\T_\lambda^d} (\delta_{\ell n}u^\nu \cdot n)^3\;dxdtdS(n)
\end{align*}
where $dS(n)$ be the surface measure of the unit sphere $S^{d-1}$. We also define the the following two-point correlations:
\begin{align*}
    \Gamma_{vor}(\ell) &= \E\fint_0^T\fint_{S^{d-1}}\fint_{\T_\lambda^d} \omega^\nu \cdot T_{\ell n}\omega^\nu\;dxdS(n)dt\\
    \Gamma_{vel}(\ell) &= \E\fint_0^T\fint_{S^{d-1}}\fint_{\T_\lambda^d} u^\nu \cdot T_{\ell n}u^\nu\;dxdS(n)dt\\
    \Gamma_{vel}^\parallel(\ell) &= \E\fint_0^T\fint_{S^{d-1}}\fint_{\T_\lambda^d} (n \otimes n) : u^\nu \otimes T_{\ell n}u^\nu\;dxdS(n)dt\\
    a_{vor}(\ell) &= \frac{1}{2}\sum_j \E\fint_{S^{d-1}} \fint_{\T_\lambda^d} \curl f_j : T_{\ell n}\curl f_j\;dxdS(n)\\
    a_{vel}(\ell) &= \frac{1}{2}\sum_j \E\fint_{S^{d-1}} \fint_{\T_\lambda^d} f_j \cdot T_{\ell n} f_j\;dxdS(n)\\
    a_{vel}^\parallel(\ell) &= \frac{1}{2}\sum_j \E\fint_{S^{d-1}}\fint_{\T_\lambda^d} (n \otimes n) : f_j \otimes T_{\ell n} f_j\;dxdS(n)
\end{align*}
where $T_yg(x) = g(x+y)$ for all $y \in \R^d$. 

\begin{proposition}\label{prop: KHM relatons and regularity}
    Suppose $u^\nu$ is a solution to \eqref{eq: incompressible NSE} in the sense of Definition \ref{def: stationary martingale sol} on $\T_\lambda^d$ for $d = 2,3$. Then 
    \begin{align}
        S_{vel}(\ell) &= -4\nu\Gamma_{vel}'(\ell) - \frac{4}{\ell^{d-1}}\int_0^\ell r^{d-1}a_{vel}(r)\;dr\label{eq: velocity trans KHM}\\
        S_{vel}^\parallel(\ell) &= -4\nu(\Gamma_{vel}^\parallel)'(\ell) + \frac{2}{\ell^{d+1}}\int_0^\ell r^{d}S_{vel}(r)\;dr - \frac{4}{\ell^{d+1}}\int_0^\ell r^{d+1}a_{vel}^\parallel(r)\;dr\label{eq: velocity long KHM}
    \end{align}
    When $d = 2$, if $\omega^\nu$ is a solution to \eqref{eq: vorticity eq} in the sense of Definition \ref{def: stationary vorticity sol} then there is the additional KHM relation:
    \begin{equation}\label{eq: vorticity KHM}
        S_{vor}(\ell) = -4\nu\Gamma_{vor}'(\ell) - \frac{4}{\ell}\int_0^\ell ra_{vor}(r)\;dr
    \end{equation}
    Moreover, in $\R^3$ the correlations $\Gamma_{vel}, \Gamma_{vel}^\parallel \in C^2(\R^3)$ while in $\R^2$ the correlations $\Gamma_{vor} \in C^3(\R^2)$ and $\Gamma_{vel}, \Gamma_{vel}^\parallel \in C^4(\R^2)$.
\end{proposition}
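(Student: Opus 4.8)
The plan is to derive the KHM relations by testing the weak formulation \eqref{eq: weak energy identity} against suitable translates of $u^\nu$ (and, for the vorticity relation, against translates of $\omega^\nu$), taking expectations to kill the martingale terms via the Ito isometry, and then exploiting stationarity in time so that $\frac{d}{dt}\E(\cdot)$-type quantities vanish. Concretely, I would first establish a \emph{local} two-point energy balance: fix $y \in \R^d$ and apply Ito's lemma to the functional $\omega \mapsto \fint_{\T_\lambda^d} u^\nu(x)\cdot u^\nu(x+y)\,dx$ along a stationary martingale solution. The drift contributions split into (i) a viscous term $\nu$ times a Laplacian-in-$y$ of the correlation $\E\fint u^\nu\cdot T_y u^\nu$, (ii) a nonlinear flux term which, after integrating by parts in $x$ and using $\grad\cdot u^\nu = 0$, produces a divergence-in-$y$ of the third-order structure tensor $\E\fint (u^\nu(x+y)-u^\nu(x))\otimes(\dots)$, and (iii) the forcing/Ito-correction term $\sum_j \E\fint f_j(x)\cdot f_j(x+y)\,dx$, which is exactly $a_{vel}$-type data. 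Stationarity forces the sum of these to vanish. This is the step already carried out in \cite{bedrossian2019sufficient,bedrossian2020sufficient}; I would cite those references for the distributional identity and then average over $y = \ell n$ on the sphere $S^{d-1}$.

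The second step is purely ODE/geometry bookkeeping: averaging the pointwise-in-$y$ balance over $\{|y| = \ell\}$ converts the $y$-divergences and $y$-Laplacians into radial operators. The Laplacian averaged over the sphere gives $r^{-(d-1)}(r^{d-1} g'(r))'$ acting on the radial profile $g(\ell) = \Gamma_{vel}(\ell)$, and the divergence of the flux tensor averaged over the sphere gives, after one radial integration, the terms $\ell^{-(d-1)}\int_0^\ell r^{d-1}(\cdot)\,dr$ appearing in \eqref{eq: velocity trans KHM}. Integrating the resulting radial identity once in $\ell$ (the boundary term at $\ell = 0$ vanishing by regularity, see below) yields \eqref{eq: velocity trans KHM}. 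For the longitudinal relation \eqref{eq: velocity long KHM}, the same scheme applied to the projected correlation $(n\otimes n):u^\nu\otimes T_{\ell n}u^\nu$ produces an extra algebraic identity relating $S_{vel}^\parallel$ to $S_{vel}$ through the $\frac{2}{\ell^{d+1}}\int_0^\ell r^d S_{vel}(r)\,dr$ term; this is the standard "Karman--Howarth" closure identity relating the full and longitudinal third-order functions for incompressible, isotropically-averaged fields, and I would reproduce the short linear-algebra computation (contracting the tensor structure function against $n\otimes n$ and integrating over $S^{d-1}$). The vorticity relation \eqref{eq: vorticity KHM} in $d=2$ follows identically, starting instead from Ito applied to $\fint \omega^\nu(x)\omega^\nu(x+y)\,dx$ along a stationary \emph{mild} solution to \eqref{eq: vorticity eq}, using \eqref{eq: enstrophy balance} for the forcing term; the absence of vortex stretching is what makes the nonlinear term again a pure flux divergence.

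For the regularity claims, the key observation is that the two-point correlations are, up to constants, Fourier multipliers acting on the energy (resp. enstrophy) spectrum: $\Gamma_{vel}(\ell) \sim \sum_k \E|\widehat{u^\nu}(k)|^2 \, \widehat{dS}(\ell k)$ and similarly $\Gamma_{vor}(\ell) \sim \sum_k \E|\widehat{\omega^\nu}(k)|^2\,\widehat{dS}(\ell k)$, where $\widehat{dS}$ is the (smooth, rapidly-under-control) Fourier transform of the sphere's surface measure. Each spatial derivative in $\ell$ costs one power of $|k|$, so $\Gamma_{vel} \in C^m$ as soon as $\sum_k |k|^m \E|\widehat{u^\nu}(k)|^2 < \infty$. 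In $\R^3$, the martingale solution's energy bound $\E\|u^\nu\|_{L^2}^2 < \infty$ together with the dissipation bound $\nu\E\fint_0^T\|\grad u^\nu\|_\lambda^2 < \infty$ from \eqref{eq: energy balance} gives enough to reach $C^2$ (one needs the $H^1$-in-expectation control, which is exactly what the energy balance supplies, plus a short interpolation for the second derivative using that $\widehat{dS}$ decays). In $\R^2$ the mild solution lives in $W^{3,2}$ by Proposition \ref{proposition: exist stationary mild sols}, i.e. $\sum_k |k|^6 \E|\widehat{\omega^\nu}(k)|^2 < \infty$, giving $\Gamma_{vor} \in C^3$; and since $\widehat{u^\nu}(k) = i k^\perp |k|^{-2}\widehat{\omega^\nu}(k)$ gains one derivative via Biot--Savart, $\sum_k |k|^8 \E|\widehat{u^\nu}(k)|^2 < \infty$, yielding $\Gamma_{vel}, \Gamma_{vel}^\parallel \in C^4$. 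I expect the main obstacle to be the rigorous justification of the Ito-lemma computation at the level of martingale/mild solutions rather than classical ones — one cannot test the weak formulation against $u^\nu(\cdot+y)$ directly since $u^\nu$ has insufficient spatial regularity, so the honest argument proceeds by mollifying in space, deriving the identity for the mollified fields (which are smooth enough to be legitimate test functions), and passing to the limit using precisely the $\nu$-uniform energy/enstrophy bounds; controlling the nonlinear flux term in that limit is the delicate point, and this is where the $C^2$ (resp. $C^3$, $C^4$) regularity of the correlations is genuinely used, since it guarantees the radial integrations and the limit $\ell \to 0$ boundary terms are meaningful.
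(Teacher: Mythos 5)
The paper itself gives no proof of this proposition: the remark immediately following it defers entirely to \cite{bedrossian2019sufficient} for the 3D identities and regularity and to \cite{bedrossian2020sufficient} for the 2D ones, and your plan (It\^{o}'s lemma on mollified two-point correlations, stationarity to kill the time derivative, spherical averaging to reduce to radial identities, and Fourier-multiplier bounds on the energy/enstrophy spectrum for the $C^2$, $C^3$, $C^4$ claims) is precisely the argument of those references, which you also cite for the key distributional identity—so this is essentially the same approach. The one ingredient your sketch passes over silently is the pressure term in the longitudinal relation \eqref{eq: velocity long KHM}: translates projected by $n\otimes n$ are not divergence-free test fields, so the pressure gradient reappears there and is eliminated only after the spherical average, via the divergence theorem applied to the pressure--velocity correlation over the ball of radius $\ell$ together with $\grad\cdot u^\nu=0$, a short step worth writing out if you carry out the projected computation rather than citing it.
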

\begin{remark}
    See \cite{bedrossian2019sufficient} for details about the derivation and regularity of the identities in 3D. In $\R^2$, the derivation of the structure functions is similar to the 3D case, but there is an increase in regularity due to boundedness of the average enstrophy. See \cite{bedrossian2020sufficient} for more details. 
\end{remark}

\subsection{Key Lemmas}\label{sec: prelim useful lemmas}
In this section, several commonly used calculations are computed. 
\subsubsection{Expression for isotropic tensors}
\begin{lemma}\label{lemma: isotropic tensor identity}
    Let $k \in \N$ and $\{i_j\}_{j=1}^{2k} \subset \{1, 2, \dots, d\}$. We define $\mathcal{S}_k$ to be the space of all pairwise combinations of the elements $\{i_j\}$ and let $n \in S^{d-1}$ for $d = 2,3$ and $dS(n)$ be the surface measure of the unit sphere $S^{d-1}$. If $\sigma(p)$ is the $p$th particular set of pairings of $\{i_j\}$ (i.e. $\sigma(p) \in \mathcal{S}_k$, with $1 \leq p \leq |\mathcal{S}_k|$), then the following identity holds:
    \begin{equation}\label{eq: isotropic tensors}
        \fint_{S^{d-1}} \prod_{j=1}^{2k} n_{i_j}\;dS(n) = \beta_d(k)\sum_{j=1}^{|\mathcal{S}_k|}\prod_{(a,b) \in \sigma(j)}\delta_{a,b}
    \end{equation}
    where 
    \[
        \beta_d(k) = \begin{cases}
        \frac{1}{2^kk!} & d = 2\\
        \frac{2^kk!}{(2k+1)!} & d = 3
        \end{cases}
    \]
\end{lemma}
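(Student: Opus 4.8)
The plan is to recognize the left-hand side of \eqref{eq: isotropic tensors} as a fully symmetric, rotation-invariant rank-$2k$ tensor, which forces it to be a scalar multiple of $\sum_{p}\prod_{(a,b)\in\sigma(p)}\delta_{a,b}$, and then to pin down the single scalar by a contraction. The cleanest way to do this simultaneously for $d=2$ and $d=3$ (indeed for all $d$) is to pass through a Gaussian representation of the uniform measure on the sphere. Let $g=(g_1,\dots,g_d)$ have i.i.d.\ standard normal components; then $g/|g|$ is uniform on $S^{d-1}$ and independent of $|g|$, so
\[
    \E\big[g_{i_1}\cdots g_{i_{2k}}\big] \;=\; \E\big[|g|^{2k}\big]\;\fint_{S^{d-1}}\prod_{j=1}^{2k} n_{i_j}\;dS(n).
\]
It then suffices to compute the Gaussian moment on the left and the scalar $\E[|g|^{2k}]$.

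For the moment, I would invoke Wick's (Isserlis') theorem: since the $g_m$ are jointly centred Gaussian with $\E[g_ag_b]=\delta_{a,b}$, we get $\E[g_{i_1}\cdots g_{i_{2k}}] = \sum_{p=1}^{|\mathcal{S}_k|}\prod_{(a,b)\in\sigma(p)}\delta_{a,b}$, i.e.\ exactly the combinatorial sum over perfect matchings appearing on the right of \eqref{eq: isotropic tensors}. For the scalar factor, $|g|^2$ is $\chi^2_d$, so $\E[|g|^{2k}] = 2^k\Gamma(d/2+k)/\Gamma(d/2) = \prod_{j=0}^{k-1}(d+2j)$. Dividing yields $\beta_d(k)=\big(\prod_{j=0}^{k-1}(d+2j)\big)^{-1}$; a short factorial simplification then gives $\prod_{j=0}^{k-1}(2+2j)=2^kk!$ when $d=2$, so $\beta_2(k)=1/(2^kk!)$, and $\prod_{j=0}^{k-1}(3+2j)=3\cdot5\cdots(2k+1)=(2k+1)!/(2^kk!)$ when $d=3$, so $\beta_3(k)=2^kk!/(2k+1)!$, matching the claimed formula.

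If one prefers to avoid the Gaussian identity for $d\le 3$, I would instead argue directly: the quantity $\fint_{S^{d-1}}\prod_{j}n_{i_j}\,dS(n)$ is invariant under $O(d)$ and symmetric in the indices, hence — by the structure theorem for isotropic tensors, together with the vanishing of odd-order integrals under $n\mapsto -n$ — a constant times $\sum_p\prod_{(a,b)\in\sigma(p)}\delta_{a,b}$. The constant is then fixed by setting $i_1=\cdots=i_{2k}=1$, using that there are $(2k-1)!!=|\mathcal{S}_k|$ matchings (each contributing $1$) and that $\fint_{S^1}n_1^{2k}\,dS=\binom{2k}{k}4^{-k}$ in $d=2$ while $\fint_{S^2}n_1^{2k}\,dS=\tfrac12\int_{-1}^1 t^{2k}\,dt=1/(2k+1)$ in $d=3$; the same arithmetic as above recovers $\beta_d(k)$.

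The only genuine difficulty is bookkeeping: one must check that $\mathcal{S}_k$ as defined really enumerates the $(2k-1)!!$ perfect matchings of $\{1,\dots,2k\}$ with no repetition, so that the sum produced by Wick's theorem (respectively by the contraction) is matched term-for-term with the right-hand side of \eqref{eq: isotropic tensors}. Once that identification is clear, the remaining content is a routine $\Gamma$-function computation.
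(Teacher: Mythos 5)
Your proposal is correct, and your primary (Gaussian) route is genuinely different from the paper's argument. The paper proceeds exactly as in your fallback sketch: it asserts, via parity under $n\mapsto -n$ and the interchangeability of the indices, that the averaged monomial must be a single constant times the sum over perfect matchings, and then fixes that constant by contracting with $i_1=\cdots=i_{2k}$, computing $\fint_{S^1}n_1^{2k}\,dS=(2k-1)!!/(2k)!!$ by the cosine reduction formula in $d=2$ and $\fint_{S^2}n_1^{2k}\,dS=1/(2k+1)$ in $d=3$, together with $|\mathcal{S}_k|=(2k)!/(2^kk!)$. Your Wick-theorem route replaces the (somewhat informally justified) isotropic-tensor structure step by an exact identity: writing $\fint\prod_j n_{i_j}\,dS = \E[g_{i_1}\cdots g_{i_{2k}}]/\E[|g|^{2k}]$ using the independence of $g/|g|$ and $|g|$, Isserlis' theorem produces the matching sum with no appeal to a classification of invariant tensors, and the $\chi^2_d$ moment gives the clean dimension-uniform constant $\beta_d(k)=\prod_{j=0}^{k-1}(d+2j)^{-1}$, which your factorial simplifications correctly reduce to $1/(2^kk!)$ and $2^kk!/(2k+1)!$ for $d=2,3$. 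What each approach buys: the paper's argument is elementary and self-contained but leans on the unproved assertion that "trigonometric identities" force the pairing decomposition with equal weights; yours imports standard probabilistic facts but makes that combinatorial structure rigorous and works verbatim in every dimension. Your bookkeeping caveat (that $\mathcal{S}_k$ enumerates the $(2k-1)!!$ matchings without repetition) is exactly the identification the paper also uses, so nothing further is needed.
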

\begin{proof}
    Due to the odd symmetry of each of the components $n_i$ over the unit sphere $S^{d-1}$, the integral
    \[
        \fint_{S^{d-1}} \prod_{j=1}^{2k} n_{i_j}\;dS(n) > 0
    \]
    if and only if there is an even number of each of the components. Moreover from trigonometric identities, one can deduce that it is equivalent to a linear combination of kronecker delta tensors symbolizing all of the possible pairings of the elements in $\{i_j\}$. Moreover as the ordering of the $\{i_j\}$ are interchangeable they must all occur with the same factor. Therefore,
    \[
        \fint_{S^{d-1}} \prod_{j=1}^{2k} n_{i_j}\;dS(n) = \beta_d(k)\sum_{j=1}^{|\mathcal{S}_k|}\prod_{(a,b) \in \sigma(j)}\delta_{a,b}
    \]
    where $\mathcal{S}_k$ is the collection of all pairwise combinations of $\{i_j\}$ and $\sigma(p) \in \mathcal{S}_k$ is the $p$th collection of pairings. For example when $k = 2$ we have 
    \[
        \mathcal{S}_2 = \big\{\underbrace{\{(i_1,i_2), (i_3, i_4)\}}_{\sigma(1)}, \underbrace{\{(i_1,i_3), (i_2,i_4)\}}_{\sigma(2)}, \underbrace{\{(i_1,i_4), (i_2, i_3)\}}_{\sigma(3)}\big\}
    \]
    It is a well known combinatorial result that the number of pairings is $|\mathcal{S}_k| = \frac{(2k)!}{2^kk!}$. 

    Now all that is left is to compute the constant $\beta_d(k).$ As all possible pairings have the same factor we consider the case with $i_1 = i_2 = \dots = i_{2k}$. As none of the terms on the left hand side vanish and we get
    \[
        |\mathcal{S}_k|\beta_d(k) = \fint_{S^{d-1}}n_{i_1}^{2k}\;dS(n).
    \]
    The right hand side depends on the dimension so we consider each case separately. When $d = 2$ we use the reduction formula to deduce:
    \[
        \fint_{S^1}(n_{i_1})^{2k}\;dS(n) = \frac{1}{2\pi}\int_0^{2\pi} \cos^{2k}\theta\;d\theta = \frac{(2k-1)!!}{(2k)!!} = \frac{(2k-1)!}{(2k-2)!!(2k)!!} = \frac{2(2k-1)!}{4^k(k-1)!k!}
    \]
    where we have used the identity $(2k)!! = \prod_{j=1}^k 2j = 2^kk!$. Thus 
    \[
        \beta_2(k) = \frac{2(2k-1)!}{4^k(k-1)!k!}\frac{1}{|\mathcal{S}_k|} = \frac{1}{2^kk!}
    \]
    Whereas when $d = 3$:
    \[
        \fint_{S^2} (n_{i_1})^{2k}\;dS(n) = \frac{1}{4\pi}\int_0^{2\pi}\int_0^\pi \cos^{2k}\phi \sin\phi\;d\phi d\theta = \frac{1}{2k+1}
    \]
    so 
    \[
        \beta_3(k) = \frac{1}{2k+1}\frac{1}{|\mathcal{S}_k|} = \frac{2^kk!}{(2k+1)!}
    \]
\end{proof}

Two direct consequences of Lemma \ref{lemma: isotropic tensor identity} are the following lemmas:
\begin{lemma}\label{lemma: tangential term integral}
    Let $r \in \R$, $p \in \N \cup \{0\}$, and $n,k \in \R^d$ such that $|n| = 1$. Then
    \[
        \Re\Big(\fint_{S^{d-1}} (n \cdot k)^{2p}e^{-ir(n \cdot k)}\;dS(n)\Big) = 
        |k|^{2p}\sum_{m=0}^\infty \beta_d(m+p)\frac{(-1)^m (r|k|)^{2m}(2m+2p)!}{2^{m+p}(m+p)!(2m)!}
    \]
\end{lemma}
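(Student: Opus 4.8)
The plan is to obtain this as an immediate corollary of the isotropic tensor identity (Lemma \ref{lemma: isotropic tensor identity}) after a Taylor expansion of the exponential. First I would note that $(n\cdot k)^{2p}$ and $r$ are real, so taking the real part inside the surface integral simply replaces $e^{-ir(n\cdot k)}$ by $\cos(r\,(n\cdot k))$. Expanding the cosine in its Taylor series and interchanging the sum with the (finite-measure, bounded) surface integral — legitimate because $|n\cdot k|\le|k|$ on $S^{d-1}$, so the cosine series converges uniformly in $n$ — yields
\[
    \Re\Big(\fint_{S^{d-1}} (n \cdot k)^{2p}e^{-ir(n \cdot k)}\;dS(n)\Big) = \sum_{m=0}^\infty \frac{(-1)^m r^{2m}}{(2m)!}\fint_{S^{d-1}}(n\cdot k)^{2(m+p)}\;dS(n).
\]

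Next I would evaluate the moment $\fint_{S^{d-1}}(n\cdot k)^{2q}\,dS(n)$ for an arbitrary integer $q\ge 0$. Writing $(n\cdot k)^{2q} = \sum_{i_1,\dots,i_{2q}} k_{i_1}\cdots k_{i_{2q}}\, n_{i_1}\cdots n_{i_{2q}}$ and applying Lemma \ref{lemma: isotropic tensor identity} to the average of the product of the $n_{i_j}$, each of the $|\mathcal{S}_q|$ pairings $\sigma$ collapses to $\prod_{(a,b)\in\sigma}\sum_i k_i^2 = |k|^{2q}$ once the Kronecker deltas identify the paired indices. Using the combinatorial count $|\mathcal{S}_q| = (2q)!/(2^q q!)$ recorded in the proof of Lemma \ref{lemma: isotropic tensor identity}, this gives
\[
    \fint_{S^{d-1}}(n\cdot k)^{2q}\;dS(n) = \beta_d(q)\,|\mathcal{S}_q|\,|k|^{2q} = \beta_d(q)\,\frac{(2q)!}{2^q q!}\,|k|^{2q}.
\]

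Finally, substituting $q = m+p$ into this and pulling the factor $|k|^{2p}$ out of the sum produces
\[
    \sum_{m=0}^\infty \frac{(-1)^m r^{2m}}{(2m)!}\,\beta_d(m+p)\,\frac{(2m+2p)!}{2^{m+p}(m+p)!}\,|k|^{2m+2p} = |k|^{2p}\sum_{m=0}^\infty \beta_d(m+p)\frac{(-1)^m (r|k|)^{2m}(2m+2p)!}{2^{m+p}(m+p)!(2m)!},
\]
which is precisely the asserted identity. The only step requiring any care is the convergence of this last series together with the legitimacy of the term-by-term rearrangement; both follow from the elementary bound $\beta_d(q)\,\frac{(2q)!}{2^q q!} = \fint_{S^{d-1}} n_1^{2q}\,dS(n) \le 1$ (Lemma \ref{lemma: isotropic tensor identity} with all indices equal), which shows each summand is dominated in absolute value by $|k|^{2p}(r|k|)^{2m}/(2m)!$, a convergent majorant. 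I expect this convergence bookkeeping to be the only mild obstacle; the algebraic content is an immediate consequence of Lemma \ref{lemma: isotropic tensor identity}.
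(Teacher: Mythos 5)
Your proposal is correct and follows essentially the same route as the paper: expand the exponential (equivalently, the cosine after taking real parts), integrate term by term, and apply Lemma \ref{lemma: isotropic tensor identity} so that each pairing collapses to $|k|^{2(m+p)}$ with the count $|\mathcal{S}_{m+p}| = (2m+2p)!/(2^{m+p}(m+p)!)$. Your added justification of the sum--integral interchange and the convergent majorant is a small refinement the paper leaves implicit, not a different argument.
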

\begin{proof}
    We use the power series expansion of $e^x$ and the fact that it is an entire function to consider the integration term-wise:
    \begin{align*}
        \Re\Big(\fint_{S^{d-1}} (n \cdot k)^{2p}e^{-ir(n \cdot k)}\;dS(n)\Big) &= \Re\Big(\sum_{m=0}^\infty \frac{(-ir)^m}{m!}\fint_{S^{d-1}} (n \cdot k)^{m+2p}\;dS(n)\Big)\\
        &= \sum_{m=0}^\infty \frac{(-1)^m r^{2m}}{(2m)!}\fint_{S^{d-1}} \prod_{j=1}^{2(m+p)}k_{i_j}n_{i_j}\;dS(n)
    \end{align*}
    where $k_{i_j}n_{i_j}$ is the $j$th copy of the inner product $k_in_i = k\cdot n$. Then it follows from Lemma \ref{lemma: isotropic tensor identity} that 
    \begin{align*}
        \fint_{S^{d-1}} \prod_{j=1}^{2(m+p)}k_{i_j}n_{i_j}\;dS(n) &= \beta_d(m+p)\prod_{j=1}^{2(m+p)}k_{i_j} \sum_{q=1}^{|\mathcal{S}_{m+p}|} \prod_{(a,b) \in \sigma(q)} \delta_{ab}\\
        &= \beta_d(m+p)\sum_{q=1}^{|\mathcal{S}_{m+p}|} |k|^{2m+2p}\\
        &= \beta_d(m+p)|\mathcal{S}_{m+p}||k|^{2m+2p}\\
        &= \beta_d(m+p)\frac{(2m+2p)!}{2^{m+p}(m+p)!}|k|^{2m+2p}
    \end{align*}
    Now we substitute this back into the sum to conclude the proof.
\end{proof}
\begin{lemma}\label{lemma: longitudinal term integral}
    Let $u$ be a divergence free random vector field and let $p \in \N\cup\{0\}$ and $r \in \R$, then
    \[
        \Re\Big(\fint_{S^{d-1}} n_an_b(n\cdot k)^{2p} e^{-irn\cdot k}\;dS(n)\E\widehat{u_a}(k)\widehat{u_b}(k)\Big) = |k|^{2p}\sum_{m=0}^\infty \beta_d(m+p+1)\frac{(-1)^m (r|k|)^{2m}(2m+2p)!}{2^{m+p}(m+p)!(2m)!}\E|\widehat{u}|^2
    \]
\end{lemma}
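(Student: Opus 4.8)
The plan is to follow the template of Lemma \ref{lemma: tangential term integral}: expand the exponential as an entire power series, integrate over $S^{d-1}$ term by term via Lemma \ref{lemma: isotropic tensor identity}, and then use the divergence-free condition to collapse the resulting combinatorial sum over pairings.

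First I would write, exactly as in the proof of Lemma \ref{lemma: tangential term integral} (so that only even powers of $r$ survive after taking the real part, and $(n\cdot k)^{2m+2p}$ is expanded as $2(m+p)$ copies of $k_i n_i$),
\[
    \Re\Big(\fint_{S^{d-1}} n_an_b(n\cdot k)^{2p}e^{-irn\cdot k}\,dS(n)\,\E\widehat{u_a}(k)\widehat{u_b}(k)\Big)
    = \sum_{m=0}^\infty \frac{(-1)^m r^{2m}}{(2m)!}\,\fint_{S^{d-1}} n_a n_b \prod_{j=1}^{2(m+p)} k_{i_j}n_{i_j}\,dS(n)\,\E\widehat{u_a}(k)\widehat{u_b}(k),
\]
the termwise integration and summation being justified because the integrand is uniformly bounded on $S^{d-1}$ and $e^x$ is entire. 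Then I apply Lemma \ref{lemma: isotropic tensor identity} to the integral of $n_a n_b \prod_{j=1}^{2(m+p)} n_{i_j}$, which has $2(m+p+1)$ indices, picking up the prefactor $\beta_d(m+p+1)$ times a sum over all pairings of $\{a,b,i_1,\dots,i_{2(m+p)}\}$.

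The key step is to contract this pairing sum against $\prod_j k_{i_j}$ and $\E\widehat{u_a}(k)\widehat{u_b}(k)$ and observe that every pairing in which $a$ (or $b$) is matched to one of the $i_j$ produces a factor $k_a$ (resp. $k_b$), hence a factor $k_a k_b\,\widehat{u_a}(k)\widehat{u_b}(k) = (k\cdot\widehat{u}(k))^2 = 0$, since $\widehat{\Div u}(k) = i\,k\cdot\widehat{u}(k) = 0$. Thus the only surviving pairings are those in which $a$ is paired with $b$, contributing $\delta_{ab}\E\widehat{u_a}(k)\widehat{u_b}(k) = \E|\widehat{u}(k)|^2$, while the remaining $2(m+p)$ indices $i_j$ are paired among themselves; each such internal pairing contributes $\delta_{i_j i_{j'}}k_{i_j}k_{i_{j'}} = |k|^2$, for a total of $|k|^{2(m+p)}$, and the number of such pairings is $|\mathcal{S}_{m+p}| = (2m+2p)!/(2^{m+p}(m+p)!)$. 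Substituting this back into the series, regrouping $r^{2m}|k|^{2m}$ as $(r|k|)^{2m}$ and factoring out $|k|^{2p}$, gives precisely the claimed identity.

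The main obstacle is the bookkeeping in the step above — correctly identifying that \emph{all} "mixed" pairings are annihilated by incompressibility and that the surviving term is counted by $|\mathcal{S}_{m+p}|$ rather than $|\mathcal{S}_{m+p+1}|$; once this is in place, the remainder is identical to the computation already carried out in Lemma \ref{lemma: tangential term integral}.
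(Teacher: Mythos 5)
Your proposal is correct and follows essentially the same route as the paper's own proof: expand the exponential termwise, apply Lemma \ref{lemma: isotropic tensor identity} with $2(m+p+1)$ indices to get the prefactor $\beta_d(m+p+1)$, use $k_j\widehat{u_j}(k)=0$ to annihilate every pairing that matches $a$ or $b$ with an $i_j$, and count the surviving pairings by $|\mathcal{S}_{m+p}| = (2m+2p)!/(2^{m+p}(m+p)!)$. Nothing further is needed.
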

\begin{proof}
    Similar to the proof of Lemma \ref{lemma: tangential term integral}, we use the power series expansion of $e^x$ and the fact that it is an entire function to consider the integration term-wise:
     \begin{align*}
       \Re\Big(\fint_{S^{d-1}} n_an_b(n\cdot k)^{2p} e^{-irn\cdot k}\;dS(n)\E\widehat{u_a}\widehat{u_b}\Big) &= \Re\Big(\sum_{m=0}^\infty \frac{(-ir)^m}{m!}\fint_{S^{d-1}} n_an_b(n \cdot k)^{m+2p}\;dS(n)\E\widehat{u_a}\widehat{u_b}\Big)\\
        &= \sum_{m=0}^\infty \frac{(-1)^m r^{2m}}{(2m)!}\fint_{S^{d-1}} n_an_b\prod_{j=1}^{2(m+p)}k_{i_j}n_{i_j}\;dS(n)\E\widehat{u_a}\widehat{u_b}
    \end{align*}
    Note that on the Fourier side, the divergence free condition can be written as $k_j\widehat{u_j} = 0$. Now, as there are $2n+2p+2$ $n_j$ when we apply Lemma \ref{lemma: isotropic tensor identity} we get
    \begin{align*}
        \fint_{S^{d-1}} n_an_b\prod_{j=1}^{2(m+p)}k_{i_j}n_{i_j}\;dS(n)\E\widehat{u_a}\widehat{u_b} &= \beta_d(m+p+1)\prod_{j=1}^{2(m+p)}k_{i_j} \Big(\sum_{q=1}^{|\mathcal{S}_{m+p}|} \delta_{ab}\prod_{(a',b') \in \sigma(q)} \delta_{a'b'}\E\widehat{u_a}\widehat{u_b}\\
        &\quad + \sum_{q=1}^{|\mathcal{S}_{m+p}|} \sum_{(a',b') \in \sigma(q)}(\delta_{ab'}\delta_{a'b} + \delta_{a'a}\delta_{b'b})\prod_{\substack{(c',d') \in \sigma(q)\\
        (c',d') \neq (a',b')}}\delta_{c'd'}\E\widehat{u_a}\widehat{u_b}\Big)\\
        &= \beta_d(m+p+1) \sum_{q=1}^{|\mathcal{S}_{m+p}|}  |k|^{2m+2p}\E|\widehat{u}|^2\\
        &\quad + 2\beta_d(m+p+1)\sum_{q=1}^{|\mathcal{S}_{m+p}|} |k|^{2m+2p-2}\E k_a\widehat{u}_ak_b\widehat{u}_b\\
        &= \beta_d(m+p+1)|\mathcal{S}_{m+p}||k|^{2m+2p}\E|\widehat{u}|^2\\
        &= \beta_d(m+p+1)\frac{(2m+2p)!}{2^{m+p}(m+p)!} |k|^{2m+2p}\E|\widehat{u}|^2
    \end{align*}
    Substituting this back into the sum concludes the proof.
\end{proof}

\subsubsection{Filtration Limits}
In this section, we will present the two main lemmas which will provide the characterization of the direct and inverse cascades. First we consider the case when small scale limit i.e. when $\ell \to 0$ and characterize the existence of a cascade as when there is a sequence of wave-numbers for the energy to ``escape out to infinity".  
\begin{lemma}[Small Scale Limit]\label{lemma: small scale wave-number sum limit}
    Let $c(\ell,k)$ be a bounded sequence of real numbers such that $\ds\lim_{\ell|k| \to 0}c(\ell,k) = 0$ and $\ds\lim_{\ell|k| \to \infty} c(\ell,k) = L$ with $L \neq 0$. Let $f^\nu$ is a sequence of $L^2$ random variables such that 
    \[
        \limsup_{\nu \to 0}\E\|f^\nu\|_\lambda^2 < \infty
    \]
    Then there exists $\ell_\nu \to 0$ such that  
    \[
        \lim_{\ell_I \to 0}\liminf_{\nu \to 0}\inf_{\ell \in (\ell_\nu, \ell_I)}\sum_{k}c(\ell,k)\E|\widehat{f^\nu}(k)|^2 = L\delta
    \]
    if and only if there exists $N_\nu \to \infty$ such that 
    \[
        \liminf_{\nu \to 0}\sum_{|k| \geq N_\nu}\E|\widehat{f^\nu}|^2 = \delta
    \]
\end{lemma}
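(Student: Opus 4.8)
The plan is to prove the two implications separately, with the $(\Leftarrow)$ direction being the easier one. Suppose first that there exists $N_\nu\to\infty$ with $\liminf_{\nu\to 0}\sum_{|k|\ge N_\nu}\E|\widehat{f^\nu}|^2=\delta$. I would split the sum $\sum_k c(\ell,k)\E|\widehat{f^\nu}(k)|^2$ into the low modes $|k|<N_\nu$ and the high modes $|k|\ge N_\nu$. For the low-mode piece, since $c$ is bounded and $c(\ell,k)\to 0$ as $\ell|k|\to 0$, uniform smallness should follow on the range $\ell\in(\ell_\nu,\ell_I)$ provided $\ell_I$ is chosen small and, crucially, provided $\ell_\nu N_\nu\to 0$: then $\ell|k|\le \ell_I N_\nu$, and taking $\ell_I$ small forces $\sup_{|k|<N_\nu}|c(\ell,k)|$ small, so that piece is bounded by $o(1)\cdot\E\|f^\nu\|_\lambda^2=o(1)$ using $\limsup_\nu\E\|f^\nu\|_\lambda^2<\infty$. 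For the high-mode piece, since $c(\ell,k)\to L$ as $\ell|k|\to\infty$, on the regime $\ell\ge\ell_\nu$ with $\ell_\nu N_\nu\to\infty$ we get $c(\ell,k)$ uniformly close to $L$ on $|k|\ge N_\nu$, so that piece is $(L+o(1))\sum_{|k|\ge N_\nu}\E|\widehat{f^\nu}|^2$, which limits to $L\delta$. The tension is that we need $\ell_\nu N_\nu\to 0$ for the low modes but $\ell_\nu N_\nu\to\infty$ for the high modes; the resolution is to replace $N_\nu$ by a slightly larger cutoff (e.g. choose $\ell_\nu\to 0$ so slowly that $\ell_\nu N_\nu\to 0$, and insert an intermediate cutoff $\tilde N_\nu$ with $N_\nu\le\tilde N_\nu$, $\ell_\nu\tilde N_\nu\to\infty$, while $\sum_{N_\nu\le|k|<\tilde N_\nu}\E|\widehat{f^\nu}|^2\to 0$), noting that $\liminf\sum_{|k|\ge\tilde N_\nu}=\delta$ still holds. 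This diagonalization over the two scales $\ell_\nu$ and the cutoff is the first technical point to get right.

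For the $(\Rightarrow)$ direction, assume $\lim_{\ell_I\to 0}\liminf_{\nu\to 0}\inf_{\ell\in(\ell_\nu,\ell_I)}\sum_k c(\ell,k)\E|\widehat{f^\nu}(k)|^2=L\delta$ for some $\ell_\nu\to 0$. The idea is to read off the cutoff $N_\nu$ from the structure of $c$: for each scale $\ell$ define $N(\ell)$ so that $c(\ell,k)$ is small for $|k|\le N(\ell)/\ell$-type thresholds — more precisely, fix an increasing function $R(\cdot)$ with $R(m)\to\infty$ and set the cutoff at $|k|\sim R(\ell)/\ell$ where $R(\ell)\to\infty$ slowly as $\ell\to 0$. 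Evaluating the hypothesis along the sequence $\ell=\ell_\nu$, and using that $c(\ell,k)\to 0$ for $\ell|k|$ bounded and $c(\ell,k)\to L$ for $\ell|k|$ large, we extract that the quantity $\sum_{|k|\ge R(\ell_\nu)/\ell_\nu}\E|\widehat{f^\nu}|^2$ must have $\liminf$ equal to $\delta/$ (the limiting value of $c$, namely $1$ after dividing by $L$). Set $N_\nu:=R(\ell_\nu)/\ell_\nu$; since $R(\ell_\nu)\to\infty$ and $\ell_\nu\to 0$ we have $N_\nu\to\infty$, and $\liminf_\nu\sum_{|k|\ge N_\nu}\E|\widehat{f^\nu}|^2=\delta$. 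The care here is to justify, using only the $\limsup$ bound on $\E\|f^\nu\|_\lambda^2$ and the pointwise limits of $c$, that the low-mode contribution to the sum is genuinely negligible and the high-mode coefficient is genuinely $\approx L$, uniformly enough that the $\inf$ over $\ell\in(\ell_\nu,\ell_I)$ and the $\liminf_\nu$ can be interchanged with these approximations.

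The main obstacle I anticipate is handling the intermediate regime $\ell|k|=O(1)$ where $c(\ell,k)$ is neither small nor close to $L$: the mass $\E|\widehat{f^\nu}(k)|^2$ sitting on wave-numbers $|k|\sim 1/\ell$ contributes an uncontrolled amount to the sum with an uncontrolled coefficient. This is exactly why the statement needs the flexibility of choosing $\ell_\nu$ (and hence the cutoff) adaptively, and why the characterization is phrased with the double limit $\lim_{\ell_I\to 0}\liminf_{\nu\to 0}$: one shows that along a well-chosen sequence $\ell_\nu\to 0$ this intermediate band can be made to carry vanishing mass in the limit, or else it gets absorbed. I expect the cleanest route is to first prove a deterministic sequence lemma (given $a_k^\nu\ge 0$ with $\sum_k a_k^\nu$ bounded, relate $\sum_k c(\ell,k)a_k^\nu$ to tail sums $\sum_{|k|\ge N}a_k^\nu$) and then apply it with $a_k^\nu=\E|\widehat{f^\nu}(k)|^2$; the monotone dependence of the tail sum on $N$ is what makes the cutoff $N_\nu$ well-defined, e.g. via $N_\nu=\inf\{N:\sum_{|k|\ge N}\E|\widehat{f^\nu}|^2\le\delta+\epsilon_\nu\}$ for a suitable $\epsilon_\nu\to 0$.
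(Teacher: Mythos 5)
Your $(\Rightarrow)$ direction is structurally the paper's argument: setting $N_\nu=R(\ell_\nu)/\ell_\nu$ with $R\to\infty$ is exactly the paper's choice of a cutoff with $\ell_\nu N_\nu\to\infty$, after which the coefficient on the tail $|k|\ge N_\nu$ is uniformly close to $L$ for every $\ell\ge\ell_\nu$ and a squeeze argument gives $L\liminf_\nu\sum_{|k|\ge N_\nu}\E|\widehat{f^\nu}|^2$. However, you leave the low-mode / intermediate-band contribution as a step ``to justify,'' and that is precisely the content of the lemma; the paper disposes of it (by a short contradiction argument using $c(\ell,k)\to 0$ as $\ell|k|\to 0$ together with the infimum over $\ell$ and the final $\ell_I\to 0$ limit), whereas your write-up never supplies a substitute estimate.

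The genuine gap is in your $(\Leftarrow)$ direction. First, the assertion that ``$\ell|k|\le\ell_I N_\nu$, and taking $\ell_I$ small forces $\sup_{|k|<N_\nu}|c(\ell,k)|$ small'' is incompatible with the order of limits: the $\liminf_{\nu\to0}$ sits inside $\lim_{\ell_I\to0}$, so for each fixed $\ell_I$ one has $\ell_I N_\nu\to\infty$, and the low-mode coefficients are not uniformly small on $\ell\in(\ell_\nu,\ell_I)$. Second, your proposed repair cannot be carried out in general: requiring $\ell_\nu N_\nu\to 0$ and $\ell_\nu\tilde N_\nu\to\infty$ forces $\tilde N_\nu/N_\nu\to\infty$, so if the tail mass concentrates in, say, $N_\nu\le|k|\le 2N_\nu$ (perfectly compatible with $\liminf_\nu\sum_{|k|\ge N_\nu}\E|\widehat{f^\nu}|^2=\delta>0$), the band $[N_\nu,\tilde N_\nu)$ eventually contains all of that mass; then $\sum_{N_\nu\le|k|<\tilde N_\nu}\E|\widehat{f^\nu}|^2\not\to 0$ and $\liminf_\nu\sum_{|k|\ge\tilde N_\nu}\E|\widehat{f^\nu}|^2=0\ne\delta$, so the parenthetical claim that ``$\liminf\sum_{|k|\ge\tilde N_\nu}=\delta$ still holds'' fails exactly where it is needed. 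The paper resolves the tension in the opposite way: it keeps the cutoff at $N_\nu$ and chooses $\ell_\nu$ with $\ell_\nu N_\nu\to\infty$, so that $\ell|k|\ge\ell_\nu N_\nu\to\infty$ on the whole tail for every admissible $\ell$, and then argues that the remaining low-mode sum vanishes in the triple limit; no auxiliary cutoff or vanishing-band-mass hypothesis is invoked. As written, neither direction of your proposal is closed: the forward direction omits the key estimate, and the backward direction rests on a construction that does not exist in general.
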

\begin{proof}
    $(\Rightarrow)$ Suppose there exists $\ell_\nu \to 0$ such that 
    \[
        \lim_{\ell_I \to 0}\liminf_{\nu \to 0}\inf_{\ell \in (\ell_\nu, \ell_I)}\sum_{k}c(\ell,k)\E|\widehat{f^\nu}(k)|^2 = L\delta
    \]
    Pick $N_\nu \to \infty$ such that $\ds\lim_{\nu \to 0}\ell_\nu N_\nu = \infty$ and consider the sums
    \[
        \sum_{k}c(\ell,k)\E|\widehat{f^\nu}(k)|^2  = \sum_{|k| < N_\nu}c(\ell,k)\E|\widehat{f^\nu}(k)|^2 + \sum_{|k| \geq N_\nu}c(\ell,k)\E|\widehat{f^\nu}(k)|^2  =: I_1 + I_2
    \]
    First consider $I_2$ and observe
    \[
        \inf_{|k| \geq N_\nu} c(\ell,k)\sum_{|k| \geq N_\nu}\E|\widehat{f^\nu}(k)|^2 \leq I_2 \leq \sup_{|k|\geq N_\nu}\sum_{|k| \geq N_\nu}c(\ell,k)\E|\widehat{f^\nu}(k)|^2
    \]
    Apply the inf in $\ell$ to each side of the inequality:
    \[
        \inf_{\ell|k| \geq \ell_\nu N_\nu}c(\ell,k)\sum_{|k| \geq N_\nu}\E|\widehat{f^\nu}(k)|^2 \leq \inf_{\ell \in (\ell_\nu, \ell_I)}I_2 \leq \sup_{\ell|k| \geq \ell_\nu N_\nu}c(\ell,k)\sum_{|k| \geq N_\nu}\E|\widehat{f^\nu}(k)|^2 
    \]
    Next we take the limits in $\nu$ and $\ell_I$. Note that by our choice of $N_\nu$ and the fact that $\ds\lim_{\ell|k| \to \infty}c(\ell,k) = L$ it follows that 
    \begin{align*}
        \lim_{\ell_I \to 0}\liminf_{\nu \to 0}\inf_{\ell|k| \geq \ell_\nu N_\nu}c(\ell,k)\sum_{|k| \geq N_\nu}\E|\widehat{f^\nu}(k)|^2 &= L\liminf_{\nu \to 0}\sum_{|k| \geq N_\nu}\E|\widehat{f^\nu}(k)|^2\\
        \lim_{\ell_I \to 0}\liminf_{\nu \to 0}\sup_{\ell|k| \geq \ell_\nu N_\nu}c(\ell,k)\sum_{|k| \geq N_\nu}\E|\widehat{f^\nu}(k)|^2 &= L\liminf_{\nu \to 0}\sum_{|k| \geq N_\nu}\E|\widehat{f^\nu}(k)|^2
    \end{align*}
    Hence by the Squeeze theorem 
    \[
        \lim_{\ell_I \to 0}\liminf_{\nu \to 0}\inf_{\ell|k| \geq \ell_\nu N_\nu}I_2 = L\liminf_{\nu \to 0}\sum_{|k| \geq N_\nu}\E|\widehat{f^\nu}(k)|^2
    \]
    Next we claim that 
    \[
        \lim_{\ell_I \to 0}\liminf_{\nu \to 0}\inf_{\ell \in (\ell_\nu, \ell_I)}|I_1| = 0
    \]
    For the sake of contradiction suppose that
    \[
        \lim_{\ell_I \to 0}\liminf_{\nu \to 0}\inf_{\ell \in (\ell_\nu, \ell_I)}|I_1| > 0
    \]
    Then there exists $|k| < \infty$ and $\varepsilon_0$ such that for all $\nu > 0, \ell_I > 0$ there exists $\nu^* < \nu$ and $\ell_I^* < \ell_I$ 
    \[
        \varepsilon_0 < \inf_{\ell \in (0, \ell_I^*)}|c(\ell,k)|\E|\widehat{f^{\nu^*}}(k)|^2 \leq C\sup_{\ell \in 0, \ell_I)}|c(\ell,k)|
    \]
    However, as $\ds\lim_{\ell|k| \to 0}c(\ell,k) = 0$, we can find $\ell_I$ small enough such that 
    \[
        \sup_{\ell \in( 0, \ell_I)}|c(\ell,k)| < \varepsilon_0/C
    \]
    a contradiction. Thus $|I_1| \to 0$ as $\ell_I \to 0$ and $\nu \to 0$.     
    Putting it all together now we get
    \begin{align*}
        \liminf_{\nu \to 0}\sum_{|k| \geq N_\nu}\E|\widehat{f^\nu}(k)|^2 = \frac{1}{L}\lim_{\ell_I \to 0}\liminf_{\nu \to 0}\inf_{\ell \in (\ell_\nu, \ell_I)}\sum_{k}c(\ell,k)\E|\widehat{f^\nu}(k)|^2 = \delta
    \end{align*}
    ($\Leftarrow$) Now we assume that there exists $N_\nu \to \infty$ such that 
    \[
        \liminf_{\nu \to 0}\sum_{|k| \geq N_\nu}\E|\widehat{f^\nu}|^2 = \delta
    \]
    Pick $\ell_\nu \to 0$ such that $\ds\lim_{\nu \to 0}\ell_\nu N_\nu = \infty$. Repeat the steps above to conclude that 
    \begin{align*}
        \lim_{\ell_I \to 0}\liminf_{\nu \to 0}\inf_{\ell \in (\ell_\nu, \ell_I)} I_1 = 0\\
        \lim_{\ell_I \to 0}\liminf_{\nu \to 0}\inf_{\ell \in (\ell_\nu, \ell_I)}I_2 = L\delta
    \end{align*}
\end{proof}

Notice that the small scale limit filters out everything except what happens to the random variables at infinite wave-numbers. As such we will sometimes refer to Lemma \ref{lemma: small scale wave-number sum limit} as the high-frequency filtration limit. Similarly we can characterize the large scale limit as filtering out everything except what happens at low frequencies. 

\begin{lemma}[Large Scale Limit]\label{lemma: large scale wave-number sum limit}
    Let $c(\ell,k)$ be a sequence of real numbers such that $\ds\lim_{\ell|k| \to 0}c(\ell,k) = 0$ and $\ds\lim_{\ell|k| \to \infty} c(\ell,k) = L$ with $L \neq 0$. Let $f^\nu$ is a sequence of $L^2$ random variables such that 
    \[
        \limsup_{\nu \to 0}\E\|f^\nu\|_\lambda^2 = \Delta < \infty.
    \]
    Then there exists $\tilde{\ell}_\nu \to \infty$ such that  
   \[
        \lim_{\ell_I \to \infty}\limsup_{\nu \to 0}\sup_{\ell \in (\ell_I, \tilde{\ell}_\nu)}\sum_{k}c(\ell,k)\E|\widehat{f^\nu}(k)|^2 = L(\Delta - \delta)
    \]
    if and only if there exists $M_\nu \to 0$ such that 
    \[
        \liminf_{\nu \to 0}\sum_{|k| \leq M_\nu}\E|\widehat{f^\nu}|^2 = \delta
    \]
\end{lemma}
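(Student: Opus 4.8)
The plan is to run the argument of Lemma~\ref{lemma: small scale wave-number sum limit} ``in reverse''. There, a high-pass cutoff $N_\nu\to\infty$ isolated the energy escaping to large wave-numbers, where the weight $c$ is $\approx L$; here a low-pass cutoff $M_\nu\to 0$ will isolate the energy piling up near the origin, and since $c\to L$ as $\ell|k|\to\infty$ it is the \emph{complement} $\sum_{|k|>M_\nu}\E|\widehat{f^\nu}(k)|^2 = \Delta_\nu-\sum_{|k|\le M_\nu}\E|\widehat{f^\nu}(k)|^2$, with $\Delta_\nu:=\sum_k\E|\widehat{f^\nu}(k)|^2$, that carries the factor $L$. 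The pair $(\liminf_\nu,\inf_\ell)$ of the small-scale lemma becomes $(\limsup_\nu,\sup_\ell)$. Since only $\limsup_\nu\Delta_\nu<\infty$ is assumed, I would first pass to a subsequence along which $\Delta_\nu$ converges, to $\Delta$ say, so that the bookkeeping $\limsup_\nu(\Delta_\nu-a_\nu)=\Delta-\liminf_\nu a_\nu$ is exact.

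For the forward implication, fix a sequence $\tilde\ell_\nu\to\infty$ realizing the hypothesis and choose $M_\nu\to 0$ with $\tilde\ell_\nu M_\nu\to 0$ (the rate to be refined, see below). Decompose $\sum_k c(\ell,k)\E|\widehat{f^\nu}(k)|^2 = I_1+I_2$ with $I_1=\sum_{|k|\le M_\nu}$ and $I_2=\sum_{|k|>M_\nu}$. For $I_1$: when $\ell\le\tilde\ell_\nu$ and $|k|\le M_\nu$ one has $\ell|k|\le\tilde\ell_\nu M_\nu\to 0$, so $\sup_{0<\ell|k|\le\tilde\ell_\nu M_\nu}|c(\ell,k)|\to 0$; since $\sum_{|k|\le M_\nu}\E|\widehat{f^\nu}(k)|^2\le\Delta_\nu$ stays bounded, $\sup_{\ell\in(\ell_I,\tilde\ell_\nu)}|I_1|\to 0$ as $\nu\to 0$, uniformly in $\ell_I$. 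For $I_2$: write $c(\ell,k)=L+(c(\ell,k)-L)$, so that $I_2 = L\big(\Delta_\nu-\sum_{|k|\le M_\nu}\E|\widehat{f^\nu}(k)|^2\big) + \sum_{|k|>M_\nu}(c(\ell,k)-L)\E|\widehat{f^\nu}(k)|^2$. On the error sum I would split the frequencies once more at a fixed $R$: the part $|k|>R$ is bounded by $\sup_{\ell|k|\ge\ell_I R}|c(\ell,k)-L|\cdot\Delta_\nu$, which vanishes as $\ell_I\to\infty$ because $c\to L$ as $\ell|k|\to\infty$; the part $M_\nu<|k|\le R$ is bounded by $(\sup|c-L|)\sum_{M_\nu<|k|\le R}\E|\widehat{f^\nu}(k)|^2$. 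Taking $\limsup_{\nu\to 0}\sup_{\ell\in(\ell_I,\tilde\ell_\nu)}$ and then $\ell_I\to\infty$ then gives $L(\Delta-\delta)$ with $\delta:=\liminf_{\nu\to 0}\sum_{|k|\le M_\nu}\E|\widehat{f^\nu}(k)|^2$, and comparing with the hypothesis (using $L\ne 0$) yields the asserted identity. The converse is symmetric: given $M_\nu\to 0$ with $\liminf_\nu\sum_{|k|\le M_\nu}\E|\widehat{f^\nu}(k)|^2=\delta$, pick $\tilde\ell_\nu\to\infty$ with $\tilde\ell_\nu M_\nu\to 0$ and rerun the same decomposition.

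The delicate point — the analogue of controlling the bulk term ``$I_1$'' in Lemma~\ref{lemma: small scale wave-number sum limit} — is the ``window'' $M_\nu<|k|\le R/\ell$ arising in the estimate of $I_2$, on which $c(\ell,k)$ is neither near $0$ nor near $L$: with $\ell\ge\ell_I$ fixed and $M_\nu\to 0$ one has $\ell_I M_\nu\to 0$, so such modes really occur. When the domain $\lambda$ stays bounded the window is harmless, since the nonzero frequencies obey $|k|\ge 2\pi/\lambda$, hence $\ell|k|\ge 2\pi\ell_I/\lambda\to\infty$ and $c\to L$ uniformly on $\{|k|>M_\nu\}$ — in that case $\delta$ is simply $\liminf_\nu\E|\widehat{f^\nu}(0)|^2$. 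When $\lambda=\lambda(\nu)\to\infty$, as in Theorem~\ref{thm: inverse cascade characterization}, the nonzero frequencies accumulate at the origin and the window is genuine; then one must refine the choice of $M_\nu$ (respectively $\tilde\ell_\nu$) by a diagonal argument so that $\sum_{M_\nu<|k|\le c'/\tilde\ell_\nu}\E|\widehat{f^\nu}(k)|^2\to 0$ while still $\liminf_\nu\sum_{|k|\le M_\nu}\E|\widehat{f^\nu}(k)|^2=\delta$, i.e.\ so that $M_\nu$ sits at the edge of the low-frequency mass. Arranging this refinement, together with the subsequence extraction forced by the hypothesis being only a $\limsup$, is where I expect the real work to lie; the boundedness $\limsup_\nu\E\|f^\nu\|_\lambda^2<\infty$ is what keeps every error term summable and makes the squeeze go through.
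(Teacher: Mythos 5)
Your skeleton is the same as the paper's: split the spectral sum at a cutoff $M_\nu$ tied to $\tilde\ell_\nu$, use $c\to 0$ on the low band and $c\to L$ on the high band, squeeze, and do the bookkeeping against $\Delta$. Your two refinements --- passing to a subsequence so that $\E\|f^\nu\|_\lambda^2$ actually converges to $\Delta$, and killing the band $|k|\le M_\nu$ through smallness of the weight (since $\tilde\ell_\nu M_\nu\to 0$ forces $\ell|k|\to 0$ there) rather than through smallness of its mass --- are sound and arguably cleaner than the corresponding steps in the paper, which takes $M_\nu=O(\tilde\ell_\nu^{-1})$ and runs its squeeze only over $|k|\ge 1$, where $\ell|k|\ge\ell_I$. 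The problem is the step you explicitly defer: the window $M_\nu<|k|\lesssim 1/\ell$, which is nonempty precisely in the regime $\lambda(\nu)\to\infty$ where the lemma is used. This is not a routine technicality to be cleaned up later; it is the crux, and the fix you sketch (re-choose $M_\nu$ by a diagonal argument so that the window mass vanishes while $\liminf_\nu\sum_{|k|\le M_\nu}\E|\widehat{f^\nu}(k)|^2$ still equals $\delta$) cannot be carried out in general. Concretely, put all the mass on one mode: let $\E|\widehat{f^\nu}(k)|^2$ equal $1$ at a single admissible frequency $k_\nu$ with $|k_\nu|=a_\nu\to 0$ and $0$ elsewhere, and take $c(\ell,k)=\chi(\ell|k|)$ with $\chi$ continuous, increasing, $\chi(0)=0$, $\chi(\infty)=L>0$. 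Choosing $\tilde\ell_\nu\to\infty$ with $\tilde\ell_\nu a_\nu\to\chi^{-1}(L/2)$ makes the left-hand limit equal $L/2=L(\Delta-\delta)$ with $\Delta=1$, $\delta=1/2$; yet for every $M_\nu\to 0$ the quantity $\sum_{|k|\le M_\nu}\E|\widehat{f^\nu}(k)|^2$ takes only the values $0$ and $1$, so its $\liminf$ can never be $1/2$. Mass sitting exactly at scales of order $\tilde\ell_\nu^{-1}$ therefore defeats any choice of $M_\nu$, and no diagonal refinement restores the forward implication as you have set it up.

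For comparison, the paper's own proof meets the same band only implicitly: after discarding $k=0$ it splits the remainder into $\{0<|k|\le M_\nu\}$ and $\{|k|\ge\max\{M_\nu,1\}\}$, squeezes on the latter (legitimate there, since $\ell|k|\ge\ell_I$), and disposes of the former --- and with it the intermediate band $M_\nu<|k|<1$ --- by asserting its mass is negligible, which is exactly the point your window objection targets. So you have correctly localized where the content of the lemma lies, but your proposal does not close the argument at that point: a complete proof needs either an additional assumption excluding spectral mass at the intermediate scales $M_\nu\lesssim|k|\lesssim 1$ (this is what the paper's assertion amounts to) or a genuinely different mechanism, not an appeal to a diagonal argument.
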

\begin{proof}
    The proof is very similar to the proof of Lemma \ref{lemma: small scale wave-number sum limit}. \newline
    ($\Rightarrow$) Suppose there exists $\tilde{\ell}_\nu$ such that
    \[
        \lim_{\ell_I \to \infty}\limsup_{\nu \to 0}\sup_{\ell \in (\ell_I, \tilde{\ell}_\nu)}\sum_{k}c(\ell,k)\E|\widehat{f^\nu}(k)|^2 = L(\Delta - \delta)
    \]
    Pick $M_\nu \to 0$ such that $M_\nu = O(\tilde{\ell}_\nu^{-1})$. By assumption $c(\ell,0) = 0$ for all $\ell$.
    Hence 
    \begin{align*}
         \sum_{k}c(\ell,k)\E|\widehat{f^\nu}(k)|^2 &= \sum_{|k| \geq 1}c(\ell,k)\E|\widehat{f^\nu}(k)|^2\\
         &= \sum_{1 \leq |k|\leq M_\nu}c(\ell,k)\E|\widehat{f^\nu}(k)|^2 + \sum_{|k| \geq \max\{M_\nu ,1\}}c(\ell,k)\E|\widehat{f^\nu}(k)|^2\\
         &=: I_1 + I_2
    \end{align*}
    As the sequence $c(\ell,k)$ is convergent, it is also bounded uniformly in $\ell$ and $k$. Moreover, from our choice of $M_\nu$ it follows that  
    \[
        \lim_{\ell_I \to \infty}\limsup_{\nu \to 0}\sup_{\ell \in (\ell_I, \tilde{\ell}_\nu)} |I_1| \leq C\limsup_{\nu \to 0} \sum_{1\leq |k| \leq M_\nu}\E|\widehat{f^\nu}|^2 = 0
    \]
    Next consider the term $I_2$. Define $M_\nu^* = \max\{M_\nu, 1\}$ then 
    \[
        \inf_{|k| \geq M_\nu^*}c(\ell,k) \sum_{|k| \geq M_\nu^*} \E|\widehat{f^\nu}|^2 \leq I_2 \leq \sup_{|k| \geq M_\nu^*}c(\ell,k) \sum_{|k| \geq M_\nu^*}\E|\widehat{f^\nu}|^2 
    \]
    Apply the sup in $\ell$ to each side of the inequality:
    \[
        \inf_{|k|\ell \geq \ell_I M_\nu^*}c(\ell,k) \sum_{|k| \geq M_\nu^*} \E|\widehat{f^\nu}|^2 \leq \sup_{\ell \in (\ell_I, \tilde{\ell}_I)}I_2 \leq \sup_{|k|\ell \geq \ell_I M_\nu^*}c(\ell,k)\sum_{|k| \geq M_\nu^*} \E|\widehat{f^\nu}|^2 
    \]
    Finally take the limit in $\nu$ and $\ell_I$ and use the fact that $\ds\lim_{\ell|k| \to \infty}c(\ell,k) = L$:
    \begin{align*}
        \lim_{\ell_I \to \infty}\limsup_{\nu \to 0}\inf_{\ell|k| \geq \ell_IM_\nu^*}c(\ell,k)\sum_{|k| \geq M_\nu^*}\E|\widehat{f^\nu}(k)|^2 &= \lim_{\ell_I \to \infty}\inf_{\ell|k| \geq \ell_I}c(\ell,k)\limsup_{\nu \to 0}\sum_{|k| \geq M_\nu^*}\E|\widehat{f^\nu}(k)|^2\\
        &= L\limsup_{\nu \to 0}\sum_{|k| \geq M_\nu^*}\E|\widehat{f^\nu}(k)|^2\\
        \lim_{\ell_I \to \infty}\limsup_{\nu \to 0}\sup_{\ell|k| \geq \ell_IM_\nu^*}c(\ell,k)\sum_{|k| \geq M_\nu^*}\E|\widehat{f^\nu}(k)|^2 &= \lim_{\ell_I \to \infty}\sup_{\ell|k| \geq \ell_I}c(\ell,k)\limsup_{\nu \to 0}\sum_{|k| \geq M_\nu^*}\E|\widehat{f^\nu}(k)|^2\\
        &= L\limsup_{\nu \to 0}\sum_{|k| \geq M_\nu^*}\E|\widehat{f^\nu}(k)|^2
    \end{align*}
    Hence by the Squeeze theorem 
    \[
        \lim_{\ell_I \to \infty}\limsup_{\nu \to 0}\sup_{\ell \in (\ell_I, \tilde{\ell}_\nu}I_2 = L\limsup_{\nu \to 0}\sum_{|k| \geq M_\nu^*}\E|\widehat{f^\nu}(k)|^2
    \]
    Putting it all together results in 
    \begin{align*}
        \liminf_{\nu \to 0} \sum_{|k| \leq M_\nu}\E|\widehat{f^\nu}(k)|^2 &=  \Delta - \limsup_{\nu \to 0}\sum_{|k| \geq M_\nu^*}\E|\widehat{f^\nu}(k)|^2\\
        &= \Delta - \frac{1}{L}\lim_{\ell_I \to \infty}\limsup_{\nu \to 0}\sup_{\ell \in (\ell_I, \tilde{\ell}_\nu}\sum_{k}c(\ell,k)\E|\widehat{f^\nu}(k)|^2\\
        &= \delta
    \end{align*}
    ($\Leftarrow$) Now assume that there exists $M_\nu \to 0$ such that 
    \[
        \delta = \liminf_{\nu \to 0}\sum_{|k| \leq M_\nu}\E|\widehat{f^\nu}|^2 = \Delta - \limsup_{\nu \to 0}\sum_{|k| \geq \max\{M_\nu, 1\}}\E|\widehat{f^\nu}|^2
    \]
    Simplifying, we have $\ds\limsup_{\nu \to 0}\sum_{|k| \geq \max\{M_\nu, 1\}}\E|\widehat{f^\nu}|^2 = \Delta - \delta$.
    
    Choose $\tilde{\ell}_\nu = O(M_\nu^{-1})$ and repeat the steps above to conclude that 
    \begin{align*}
        \lim_{\ell_I \to 0}\limsup_{\nu \to 0}\sup_{\ell \in (\ell_\nu, \ell_I)} I_1 = 0\\
        \lim_{\ell_I \to 0}\limsup_{\nu \to 0}\sup_{\ell \in (\ell_\nu, \ell_I)} I_2 = L(\Delta - \delta)        
    \end{align*}
\end{proof}

\section{The 3D Direct Cascade}\label{sec: 3d direct cascade}
In this section we establish the characterization of the direct cascade flux laws in 3D.
\begin{theorem}[3D Direct Cascade Characterization]
    Suppose $\{u\}_{\nu > 0}$ is a sequence of statistically stationary solutions.
    There exists $N_\nu \geq 1$ such that $\ds\lim_{\nu \to 0}N_\nu = \infty$ and 
    \[
        \liminf_{\nu \to 0}\Big(\nu \sum_{|k| \geq N_\nu}\E|\widehat{\grad u}|^2 + \E D(u^\nu)\Big) = \varepsilon^*
    \]
    if and only if there exists $\ell_\nu \in (0, 1)$ such that $\ds\lim_{\nu \to 0}\ell_\nu = 0$ and 
    \begin{align}
        \lim_{\ell_I \to 0}\limsup_{\nu \to 0}\sup_{\ell \in [\ell_\nu, \ell_I]}\Big|\frac{S_{vel}(\ell)}{\ell^3} + \frac{4}{3}\varepsilon^*\Big| = 0\label{eq: 3d S0 limit}\\
        \lim_{\ell_I \to 0}\limsup_{\nu \to 0}\sup_{\ell \in [\ell_\nu, \ell_I]}\Big|\frac{S_{vel}^\parallel(\ell)}{\ell} + \frac{4}{5}\varepsilon^*\Big| = 0\label{eq: 3d S long limit}
    \end{align}
\end{theorem}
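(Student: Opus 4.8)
The plan is to push everything onto the single non-negative spectral sequence $\E|\widehat{f^\nu}(k)|^2 := \nu|k|^2\,\E\fint_0^T|\widehat{u^\nu}(k)|^2$, for which $\sum_k\E|\widehat{f^\nu}(k)|^2 = \nu\,\E\fint_0^T\|\grad u^\nu\|_\lambda^2 = \varepsilon - \E D(u^\nu)\le\varepsilon$ by the energy balance \eqref{eq: energy balance}, so that Lemma \ref{lemma: small scale wave-number sum limit} applies to $f^\nu$. The bridge from the structure function $S_{vel}$ to this sequence is the Karman--Howarth--Monin identity \eqref{eq: velocity trans KHM}, which expresses $S_{vel}(\ell)$ through $\nu\Gamma_{vel}'(\ell)$ and the viscosity-independent forcing correlation $a_{vel}$, with $\Gamma_{vel}$ a genuine two-point correlation and hence Fourier-diagonal.

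\emph{Step 1 (Fourier representation).} By Parseval and Lemma \ref{lemma: tangential term integral} with $p=0$ --- which in $d=3$ produces the kernel $\phi_3(s)=\tfrac{\sin s}{s}$ --- one has $\Gamma_{vel}(\ell)=\sum_k\E\fint_0^T|\widehat{u^\nu}(k)|^2\,\phi_3(\ell|k|)$. Differentiating, substituting into \eqref{eq: velocity trans KHM} with $d=3$, using $a_{vel}(0)=\varepsilon$ and the continuity of $a_{vel}$, and using the energy balance once to rewrite the constant produced by the value $\phi_3'(s)/s\to-\tfrac13$ as $s\to0$, one arrives at
\[
    \frac{S_{vel}(\ell)}{\ell}+\frac{4}{3}\varepsilon^* \;=\; \frac{4}{3}\bigl(\varepsilon^*-\E D(u^\nu)\bigr)\;-\;\sum_k c(\ell,k)\,\E|\widehat{f^\nu}(k)|^2\;+\;R(\ell),
\]
where $R(\ell)=\tfrac{4}{\ell^3}\int_0^\ell r^2\bigl(\varepsilon-a_{vel}(r)\bigr)\,dr\to0$ as $\ell\to0$ uniformly in $\nu$, and $c(\ell,k)=\tfrac43+4\,\phi_3'(\ell|k|)/(\ell|k|)$ is a bounded function of $\ell|k|$ alone with $c(\ell,k)\to0$ as $\ell|k|\to0$ and $c(\ell,k)\to\tfrac43$ as $\ell|k|\to\infty$ --- precisely the admissible kernel class of Lemma \ref{lemma: small scale wave-number sum limit}.

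\emph{Step 2 (defect absorption and filtration limit).} For a cutoff $N_\nu\to\infty$ slaved to $\ell_\nu\to0$ by $\ell_\nu N_\nu\to\infty$, one has $\sup\{\,|c(\ell,k)-\tfrac43|:|k|\ge N_\nu,\ \ell\ge\ell_\nu\,\}\to0$, so the high-frequency part of the sum is $\tfrac43\,\nu\sum_{|k|\ge N_\nu}\E\fint_0^T|\widehat{\grad u^\nu}|^2$ up to $o(1)$; together with the $\tfrac43\,\E D(u^\nu)$ term this is $\tfrac43\mathcal E_\nu$, where $\mathcal E_\nu:=\E D(u^\nu)+\nu\sum_{|k|\ge N_\nu}\E\fint_0^T|\widehat{\grad u^\nu}|^2$. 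Hence $\tfrac{S_{vel}(\ell)}{\ell}+\tfrac43\varepsilon^*$ equals $\tfrac43(\varepsilon^*-\mathcal E_\nu)$ plus the low-mode remainder $\sum_{|k|<N_\nu}c(\ell,k)\,\E|\widehat{f^\nu}(k)|^2$ plus quantities vanishing under $\lim_{\ell_I\to0}\limsup_{\nu\to0}\sup_{\ell\in[\ell_\nu,\ell_I]}$. Running this through Lemma \ref{lemma: small scale wave-number sum limit} --- which is exactly the assertion that, with $\ell_\nu N_\nu\to\infty$, the low-mode remainder washes out in this iterated limit while the high-mode part passes to $\liminf$ --- together with its $\limsup$--$\sup$ counterpart (to handle the absolute value in \eqref{eq: 3d S0 limit}), yields the equivalence of \eqref{eq: 3d S0 limit} with the existence of $N_\nu\to\infty$ such that $\liminf_{\nu\to0}\mathcal E_\nu=\varepsilon^*$, i.e.\ the condition in the theorem. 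The longitudinal law \eqref{eq: 3d S long limit} is obtained by the same scheme applied to the second identity \eqref{eq: velocity long KHM}: one Fourier-izes $\Gamma_{vel}^\parallel$ via Lemma \ref{lemma: longitudinal term integral} with $p=0$, substitutes \eqref{eq: velocity trans KHM} into the Ces\`aro-type average $\tfrac{2}{\ell^{d+1}}\int_0^\ell r^d S_{vel}(r)\,dr$, and evaluates the longitudinal forcing constant $a_{vel}^\parallel(0)=\tfrac13\varepsilon$ via Lemma \ref{lemma: isotropic tensor identity}; the arithmetic collapses the defect coefficient to $\tfrac45$ and the resulting kernel lies in the same admissible class, so \eqref{eq: 3d S long limit} is equivalent to the \emph{same} condition $\liminf_{\nu\to0}\mathcal E_\nu=\varepsilon^*$.

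\emph{Main obstacle.} I expect the crux to be the two-limit bookkeeping, and within it the handling of the defect measure $\E D(u^\nu)$, which need not converge along $\nu\to0$: one must simultaneously choose $N_\nu$ so that the high-frequency enstrophy tail \emph{together with} $\E D(u^\nu)$ realizes the prescribed $\liminf$, and slave $\ell_\nu$ to $N_\nu$ via $\ell_\nu N_\nu\to\infty$ so that $c(\ell,k)$ is uniformly near its endpoint values on the surviving ranges. The uniformity of $R(\ell)$ in $\nu$ (resting only on the forcing being fixed and $a_{vel}$ continuous) is what licenses taking the $\ell$- and $\nu$-limits in the stated order, and the two-sidedness of \eqref{eq: 3d S0 limit} forces the use of both the $\liminf$--$\inf$ form of Lemma \ref{lemma: small scale wave-number sum limit} and its $\limsup$--$\sup$ analogue.
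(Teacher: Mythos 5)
Your proposal is correct and follows essentially the same route as the paper: the KHM identity \eqref{eq: velocity trans KHM}, a Fourier-side rewriting of $-\tfrac{4\nu}{\ell}\Gamma_{vel}'(\ell)$ into a kernel $c(\ell,k)$ with the limits $0$ and $\tfrac43$ (your $c=\tfrac43+4\phi_3'(\ell|k|)/(\ell|k|)$ is exactly the paper's coefficient, obtained there by Taylor-expanding $\Gamma_{vel}'$ and using Lemma \ref{lemma: tangential term integral} with $p=1$ rather than differentiating the $\sin s/s$ representation), the energy balance \eqref{eq: energy balance} to produce $\varepsilon$ and $\E D(u^\nu)$, the filtration Lemma \ref{lemma: small scale wave-number sum limit} with $\ell_\nu N_\nu\to\infty$, and for \eqref{eq: 3d S long limit} the same scheme via Lemma \ref{lemma: longitudinal term integral} together with feeding the already-established law \eqref{eq: 3d S0 limit} into the term $\tfrac{2}{\ell^{d+1}}\int_0^\ell r^d S_{vel}(r)\,dr$. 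Your explicit remark that both the $\liminf$--$\inf$ and $\limsup$--$\sup$ forms of the filtration lemma are needed to control the absolute value is a fair (and slightly more careful) reading of what the paper's Step 3 does implicitly.
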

\subsection{Proof of \eqref{eq: 3d S0 limit}}
First, recall the KHM equation for $S_{vel}$ in three dimensions:
\[
    \frac{S_{vel}}{\ell} = -\frac{4\nu}{\ell}\Gamma_{vel}'(\ell) - \frac{4}{\ell^3}\int_0^\ell r^2a_{vel}(r)\;dr
\]
\textbf{Step 1:} Beginning with the forcing term, we show that 
    \[
         \frac{-4}{\ell^3}\int_0^\ell r^2a_{vel}(r)\;dr = -\frac{4}{3}\varepsilon + o_{\ell \to 0}(1)
    \]
    Note that 
    \[
        a_{vel}(0) = \frac{1}{2}\sum_{j}\E\fint_{\T_\lambda^3} |f_j(x)|^2\;dx = \varepsilon.
    \]
    Furthermore, as the $f_j$ are smooth, we Taylor expand around $r = 0$ using the Peano formulation of the error (as we do not need a rigorous analysis of the error in this case).
    \begin{align*}
        \frac{-4}{\ell^3}\int_0^\ell r^2a_{vel}(r)\;dr &= \frac{-2}{\ell^3}\int_0^\ell r^2\E\fint_{S^2}\sum_{j}\fint_{\T_\lambda^3} f_j\cdot T_{rn}f_j\;dxdS(n)dr\\
        &= \frac{-2}{\ell^3}\int_0^\ell r^2\E\fint_{S^2}\sum_{j}\fint_{\T_\lambda^3} |f_j|^2\;dxdS(n) + r^2h_0(r)dr\\
        &= \frac{-2}{3}\sum_{j}\fint_{\T_\lambda^3} \E|f_j|^2\;dx + \frac{-2}{\ell^3}\int_0^\ell r^2h_0(r)\;dr\\
        &= \frac{-4}{3}\varepsilon + \frac{-2}{\ell^3}\int_0^\ell r^2h_0(r)\;dr
    \end{align*}
    where $h_0(r)$ is a function such that $\ds\lim_{r \to 0}h_0(r) = 0$. Hence the error disappears as $\ell \to 0$ or in other words $\frac{-2}{\ell^3}\int_0^\ell r^2h_0(r)\;dr = o_{\ell \to 0}(1)$.
    
\textbf{Step 2:} Next we show that 
    \[
        \frac{-4\nu}{\ell}\Gamma_{vel}'(\ell) = \frac{4}{3}\varepsilon - \frac{4}{3}\E D(u^\nu) - \sum_k c(k, \ell)\E\fint_0^T|\widehat{\grad u^\nu}|^2
    \]
    where $\ds\lim_{\ell|k| \to 0} c(\ell,k) = 0$ and $\ds\lim_{\ell|k| \to \infty} c(\ell,k) = \frac{4}{3}$. 
    
    From Proposition \ref{prop: KHM relatons and regularity} it is known that $\Gamma_{vel} \in C^2$, so if we Taylor expand $\Gamma_{vel}'$ about 0 and use the Lagrange formulation of the error to get:
    \[
        \frac{-4\nu}{\ell}\Gamma_{vel}'(\ell) = \frac{-4\nu}{\ell}\Gamma_{vel}'(0) + \frac{-4\nu}{\ell}\int_0^\ell \Gamma_{vel}''(r)\;dr 
    \]
    By integration by parts 
    \begin{align*}
         \Gamma_{vel}'(0) &= -\E\fint_{S^2}\fint_{\T_\lambda^2} n_iu_m^\nu\partial_iu_m^\nu\;dxdS(n) = 0
    \end{align*}
    Next, consider the error term, which we will note is real valued. We use Plancheral's theorem to rewrite the $L^2$ inner product in $x$ into a Fourier series. Then we apply Fubini's theorem and Lemma \ref{lemma: tangential term integral} when $p = 1$ to get
    \begin{align*}
        \frac{-4\nu}{\ell}\int_0^\ell \Gamma_{vel}''(r)\;dr &= \frac{4\nu}{\ell}\int_0^\ell \E\fint_{S^2}\fint_{\T_\lambda^3}n_in_j\partial_iu_m^\nu\partial_jT_{rn}u_m^\nu\;dxdS(n)dr\\
        &= \frac{-4\nu}{\ell} \sum_k\int_0^\ell \fint_{S^2} (n \cdot k)^2e^{-irn\cdot k}\E|\widehat{u^\nu}(k)|^2\;dS(n)dr\\
        &= \frac{-4\nu}{\ell}\sum_k\int_0^\ell |k|^2\sum_{m=0}^\infty \frac{(-1)^m(r|k|)^{2m}}{(2m)!(2m+3)}\;dr\E|\widehat{u^\nu}(k)|^2\\
        &= 4\nu\sum_k\sum_{m=0}^\infty \frac{(-1)^m(\ell|k|)^{2m}}{(2m+1)!(2m+3)}\E\fint_0^T|\widehat{\grad u^\nu}(k)|^2\\
        &= \frac{4\nu}{3}\sum_k \E\fint_0^T|\widehat{\grad u^\nu}|^2 - 4\nu\sum_k\sum_{m=1}^\infty \frac{(-1)^{m+1}(\ell|k|)^{2m}}{(2m+1)!(2m+3)}\E\fint_0^T|\widehat{\grad u^\nu}(k)|^2\\
        &= \frac{4}{3}\varepsilon - \frac{4}{3}\E D(u^\nu) - \sum_{k}c(\ell,k)\E\fint_0^T|\widehat{\grad u^\nu}|^2
    \end{align*}
    Notice that when applying Lemma \ref{lemma: tangential term integral} we substituted in for the value of $\beta_3(m+1)$ and simplified the coefficient in the power series in $m$. Also the ultimate equality comes from the energy balance \eqref{eq: energy balance} to write the first term as $\frac{4}{3}\varepsilon.$
    
    From the definition of $c(\ell,k)$ it is readily seen that $\ds\lim_{\ell|k| \to 0}c(\ell,k) = 0$.
    \begin{align*}
        \Big|\frac{4}{3} - c(\ell,k)\Big| &= \Big|4\sum_{m=0}^\infty \frac{(-1)^m(\ell|k|)^{2m}}{(2m+1)!(2m+3)}\Big|\\
        &= \Big|\frac{4}{(\ell|k|)^3}\sum_{m=0}^\infty \frac{(-1)^m(\ell|k|)^{2m+3}}{(2m+1)!(2m+3)}\Big|\\
        &= \Big|\frac{4}{(\ell|k|)^3}\sum_{m=0}^\infty \int_0^{\ell|k|} \frac{(-1)^m t^{2m+2}}{(2m+1)!}\Big|\\
        &= \Big|\frac{4}{(\ell|k|)^3} \int_0^{\ell|k|} t\sin t\;dt\Big|\\
        &\leq \frac{4}{(\ell|k|)^3}\int_0^{\ell|k|} t\;dt\\
        &= \frac{2}{\ell|k|}
    \end{align*}
    so $\ds\lim_{\ell|k| \to \infty}c(\ell,k) = \frac{4}{3}$.
        
    \textbf{Step 3:} It follows from Lemma \ref{lemma: small scale wave-number sum limit} that there exists $\ell_\nu \to 0$ such that 
    \begin{align*}
        \lim_{\ell_I \to 0}\limsup_{\nu \to 0}\sup_{\ell \in [\ell_\nu, \ell_I]} \frac{-4\nu}{\ell}\Gamma'(\ell) &= \frac{4}{3}\varepsilon - \frac{4}{3}\liminf_{\nu \to 0}\E D(u^\nu) - \lim_{\ell_I \to 0}\liminf_{\nu \to 0}\inf_{\ell \in [\ell_\nu, \ell_I]}\nu\sum_{k}c(\ell,k)\E|\widehat{\grad u}|^2\\
        &= \frac{4\varepsilon}{3} - \frac{4\varepsilon^*}{3}
    \end{align*}
    if and only if there exists $N_\nu \to \infty$ such that 
    \[
         \liminf_{\nu \to 0}\E D(u^\nu) + \liminf_{\nu \to 0}\nu\sum_{|k| \geq N_\nu}\E|\widehat{\grad u}|^2 = \varepsilon^*
    \]
    Finally, combine Steps 1-3 with the KHM equation to derive \eqref{eq: 3d S0 limit}.

\subsection{Proof of \eqref{eq: 3d S long limit}}
    Recall the KHM equation for $S_{vel}^\parallel$ on $\T_\lambda^3:$ 
    \[
        \frac{S_{vel}^\parallel(\ell)}{\ell} = -\frac{4\nu}{\ell}(\Gamma_{vel}^\parallel)'(\ell) + \frac{2}{\ell^5}\int_0^\ell r^3S_{vel}(r)\;dr - \frac{4}{\ell^5}\int_0^\ell r^4a_{vel}^\parallel(r)\;dr
    \]
    \textbf{Step 1:} Beginning with the forcing term, we show that 
    \[
         \frac{-4}{\ell^5}\int_0^\ell r^4a_{vel}^\parallel(r)\;dr = -\frac{-4}{15}\varepsilon + o_{\ell \to 0}(1)
    \]
    Note that 
    \[
        a_{vel}^\parallel(0) = \frac{1}{3}\sum_{j}\E\fint_{\T_\lambda^3} |f_j(x)|^2\;dx = \frac{\varepsilon}{3}.
    \]
    Furthermore, as the $f_j$ are smooth, we Taylor expand around $r = 0$ using the Peano formulation of the error (as we do not need a rigorous analysis of the error in this case).
    \begin{align*}
        \frac{-4}{\ell^5}\int_0^\ell r^4a_{vel}^\parallel(r)\;dr &= \frac{-4}{\ell^5}\int_0^\ell r^4a_{vel}^\parallel(0) + r^4h_0(r)\;dr\\
        &= \frac{-4}{15}\varepsilon - \frac{4}{\ell^5}\int_0^\ell r^4h_0(r)\;dr 
    \end{align*}
    where $h_0(r)$ is a function such that $\ds\lim_{r \to 0}h_0(r) = 0$. Hence by L'hopital's Theorem, the error disappears as $\ell \to 0$ uniformly with respect to $\nu$ or in other words $\frac{-4}{\ell^5}\int_0^\ell r^2h_0(r)\;dr = o_{\ell \to 0}(1)$.
    
    \textbf{Step 2:} Next we show that 
    \[
        \frac{-4\nu}{\ell}(\Gamma_{vel}^\parallel)'(\ell) = \frac{4}{15}\varepsilon - \sum_{k}c(\ell,k)\E|\widehat{\grad u}(k)|^2
    \]
    where the coefficients satisfy $\ds\lim_{\ell|k| \to 0}c(\ell,k) = 0$ and $\ds\lim_{\ell|k| \to \infty}c(\ell,k) = \frac{4}{15}$

    From Proposition \ref{prop: KHM relatons and regularity}, we know $\Gamma_{vel}^\parallel \in C^2$. We Taylor expand this about $r = 0$ and use the Lagrange formulation of the error to get:
    \[
        \frac{-4\nu}{\ell}(\Gamma_{vel}^\parallel)'(\ell) =  \frac{-4\nu}{\ell}(\Gamma_{vel}^\parallel)'(0) - \frac{4\nu}{\ell}\int_0^\ell (\Gamma_{vel}^\parallel)''(r)\;dr
    \]
    By integration by parts
    \begin{align*}
        \frac{-4\nu}{\ell}\Gamma_\parallel'(0) &= \frac{4\nu}{\ell}\E\fint_{S^2}\fint_{\T_\lambda^2}n_in_jn_k\partial_ku_i^\nu u_j^\nu\;dxdS(n) = 0
    \end{align*}
    Next, consider the error term, which we will note is real valued. We use Plancheral's theorem to rewrite the $L^2$ inner product in $x$ into a Fourier series. Then we apply Fubini's theorem and Lemma \ref{lemma: longitudinal term integral} when $p = 1$ to get 
    \begin{align*}
        \frac{-4\nu}{\ell}\int_0^\ell \Gamma_\parallel''(r)\;dr &= \frac{4\nu}{\ell}\int_0^\ell \E\fint_{S^2}\fint_{\T_\lambda^3}n_in_jn_pn_q\partial_iu_p^\nu\partial_jT_{rn}u_q^\nu\;dxdS(n)dr\\
        &= \frac{-4\nu}{\ell} \sum_k\int_0^\ell \fint_{S^2} n_pn_q(n \cdot k)^2e^{-irn\cdot k}\E\widehat{u_p^\nu}(k)\widehat{u_q^\nu}(k)\;dS(n)dr\\
        &= \frac{4\nu}{\ell} \sum_k \int_0^\ell \sum_{m=0}^\infty \frac{(-1)^m (r|k|)^{2m}}{(2m)!(2m+3)(2m+5)}\;dr\E\fint_0^T|\widehat{\grad u^\nu}|^2\\
        &= 4\nu \sum_k \sum_{m=0}^\infty \frac{(-1)^m (\ell|k|)^{2m}}{(2m+1)!(2m+3)(2m+5)}\E\fint_0^T|\widehat{\grad u^\nu}|^2\\
        &= \frac{4\nu}{15}\sum_{k}\E\fint_0^T|\widehat{\grad u^\nu}|^2 - 4\nu \sum_k \sum_{m=1}^\infty \frac{(-1)^{m+1} (\ell|k|)^{2m}}{(2m+1)!(2m+3)(2m+5)}\E\fint_0^T|\widehat{\grad u^\nu}|^2\\
        &= \frac{4\varepsilon}{15} - \frac{4}{15}\E D(u^\nu) - \sum_{k}c(\ell,k)\E\fint_0^T|\widehat{\grad u^\nu}|^2
    \end{align*}
    Note that when we applied Lemma \ref{lemma: longitudinal term integral}, we substituted in the definition of $\beta_3(m+2)$ and then simplified the coefficient. Then in the final equality we have applied the energy balance \eqref{eq: energy balance}. 
    From the definition of $c(\ell,k)$ it is readily seen that $\ds\lim_{\ell|k| \to 0}c(\ell,k) = 0$. Also
    \begin{align*}
        \Big|\frac{4}{15} - c(\ell,k)\Big| &= \Big|4\sum_{m=0}^\infty \frac{(-1)^m (\ell|k|)^{2m}}{(2m+1)!(2m+3)(2m+5)}\Big|\\       
        &=\Big|\frac{4}{(\ell|k|)^5}\sum_{m=0}^\infty \frac{(-1)^m(\ell|k|)^{2m+5}}{(2m+1)!(2m+3)(2m+5)}\Big|\\
        &= \Big|\frac{4}{(\ell|k|)^5}\int_0^{\ell|k|}\sum_{m=0}^\infty \frac{(-1)^m}{(2m+1)!(2m+3)} t^{2m+4}\;dt\Big|\\
        &= \Big|\frac{4}{(\ell|k|)^5}\int_0^{\ell|k|}t\int_0^t\sum_{m=0}^\infty \frac{(-1)^m}{(2m+1)!} s^{2m+2}\;dsdt\Big|\\
        &= \Big|\frac{4}{(\ell|k|)^5}\int_0^{\ell|k|}t\int_0^t s\sin(s)\;dsdt\Big|\\
        &\leq \frac{C}{(\ell|k|)^5}\int_0^{\ell|k|}t\int_0^ts\;dsdt\\
        &\leq \frac{C}{\ell|k|}
    \end{align*}
    so $\ds\lim_{\ell|k| \to \infty}c(\ell,k) = \frac{4}{15}$.

    \textbf{Step 3:} Apply Lemma \ref{lemma: small scale wave-number sum limit} to conclude that there exists $\ell_\nu \to 0$ such that 
    \begin{align*}
         \lim_{\ell_I \to 0}\limsup_{\nu \to 0}\sup_{\ell \in (\ell_\nu, \ell_I)}\frac{-4\nu}{\ell}\Gamma_{\parallel}'(\ell) &= \frac{4\varepsilon}{15} - \lim_{\ell_I \to 0}\liminf_{\nu \to 0}\inf_{\ell \in (\ell_\nu, \ell_I)}\sum_kc(\ell,k)\E|\widehat{\grad u}(k)|^2\\
         &= \frac{4}{15}\varepsilon - \frac{4}{15}\varepsilon^*
    \end{align*}
    if and only if there exists $N_\nu \to \infty$ such that 
    \[
        \liminf_{\nu \to 0} \E D(u^\nu) + \liminf_{\nu \to 0}\nu\sum_{|k| \geq N_\nu}\E|\widehat{\grad u}|^2 = \varepsilon^*.
    \]
    \textbf{Step 4:} Next we use Equation \eqref{eq: 3d S0 limit}, to conclude that the if and only if condition in step 3, also implies
    \[
        \frac{2}{\ell^5}\int_0^\ell r^3S_{vel}(r)\;dr = \frac{2}{\ell^5}\int_0^\ell r^4\frac{S_{vel}(r)}{r}\;dr = \frac{-8}{15}\varepsilon^* + o_{\ell \to 0}(1)  
    \]
    Finally combine Steps 1-4 with the KHM equation to derive \eqref{eq: 3d S long limit}.

\section{The 2D Direct Cascade}\label{sec: 2d direct cascade}
In this section, we characterize the existence of a direct cascade in 2D. Note that the proof is almost identical to the 3D case, but as some of the details are not trivial we fully show the derivation in 2D.
\begin{theorem}[2D Direct Cascade Characterization]
    Suppose $\{(u^\nu, \omega^\nu)\}_{\nu > 0}$ is a sequence of statistically stationary solutions satisfying both the Navier-Stokes equations \eqref{eq: incompressible NSE} and the vorticity equation \eqref{eq: vorticity eq}. Then there exists $N_\nu \geq 1$ such that $\ds\lim_{\nu \to 0}N_\nu = \infty$ and
    \[
       \liminf_{\nu \to 0}\nu\sum_{|k| \geq N_\nu}\E|\widehat{\grad \omega^\nu}|^2 = \eta^* 
    \]
    if and only if there exists $\ell_\nu \in (0,1)$ such that $\ds \lim_{\nu \to 0}\ell_\nu = 0$ and
    \begin{align}
        \lim_{\ell_I \to 0}\limsup_{\nu \to 0}\sup_{\ell \in [\ell_\nu, \ell_I]}\Big|\frac{S_{vor}(\ell)}{\ell} + 2\eta^*\Big| = 0\label{eq: vorticity limit}\\
        \lim_{\ell_I \to 0}\limsup_{\nu \to 0}\sup_{\ell \in [\ell_\nu, \ell_I]}\Big|\frac{S_{vel}(\ell)}{\ell^3} - \frac{1}{4}\eta^*\Big| = 0\label{eq: 2d S0 limit}\\
        \lim_{\ell_I \to 0}\limsup_{\nu \to 0}\sup_{\ell \in [\ell_\nu, \ell_I]}\Big|\frac{S_{vel}^\parallel(\ell)}{\ell^3} - \frac{1}{8}\eta^*\Big| = 0\label{eq: 2d S long limit}
    \end{align}
\end{theorem}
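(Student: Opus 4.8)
The plan is to rerun the three-step scheme used for Theorem~\ref{thm: 3d Direct cascade characterization}, now on $\T_\lambda^2$ and anchored to the \emph{enstrophy} balance \eqref{eq: enstrophy balance} in place of the energy balance. For each of \eqref{eq: vorticity limit}, \eqref{eq: 2d S0 limit}, \eqref{eq: 2d S long limit} I would begin from the $d=2$ Karman--Howarth--Monin identity of Proposition~\ref{prop: KHM relatons and regularity}, divide by the advertised power of $\ell$ ($\ell$ for $S_{vor}$, $\ell^3$ for $S_{vel}$ and $S_{vel}^\parallel$), and treat the forcing integral and the viscous term $-4\nu\Gamma'(\ell)/\ell^{(\cdot)}$ separately. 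The forcing integrals $\ell^{-\alpha}\int_0^\ell r^\beta a(r)\,dr$ are dealt with by Taylor expanding the smooth correlations $a_{vor},a_{vel},a_{vel}^\parallel$ about $r=0$, using $a_{vor}(0)=\eta$, $a_{vel}(0)=\varepsilon$, $a_{vel}^\parallel(0)=\varepsilon/2$ and identifying the quadratic Taylor coefficients with $\sum_j\|\grad f_j\|_\lambda^2=2\eta$ through the two-dimensional isotropic averages of Lemma~\ref{lemma: isotropic tensor identity}; the remainder is $o_{\ell\to0}(1)$ by l'H\^opital, uniformly in $\nu$. For the viscous term I would Taylor expand $\Gamma'(\ell)$ to first order (legitimate by the $C^3$ and $C^4$ regularity asserted in Proposition~\ref{prop: KHM relatons and regularity}), note that $\Gamma'(0)=0$ by an integration-by-parts/odd-symmetry argument, and then express $\Gamma''$ exactly on the Fourier side via Plancherel together with Lemmas~\ref{lemma: tangential term integral} and~\ref{lemma: longitudinal term integral} evaluated with $\beta_2$. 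This rewrites $-4\nu\Gamma'(\ell)/\ell^{(\cdot)}$ as a convergent power series in $\ell|k|$ weighted by $\nu\E\fint_0^T|\widehat{u^\nu}(k)|^2$ (respectively by $\nu\E\fint_0^T|\widehat{\omega^\nu}(k)|^2$).

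The genuinely new feature compared with the 3D proof is that $S_{vel}/\ell^3$ and $S_{vel}^\parallel/\ell^3$ involve a \emph{double} cancellation. The leading $O(\ell^{-2})$ contributions of the viscous and forcing pieces are each singular as $\ell\to0$ but cancel exactly ($\pm 2\varepsilon\ell^{-2}$ for $S_{vel}$, $\pm\tfrac12\varepsilon\ell^{-2}$ for $S_{vel}^\parallel$, using the $2$D energy balance $\nu\E\fint_0^T\|\grad u^\nu\|_\lambda^2=\varepsilon$, valid since $D(u^\nu)\equiv 0$ in 2D), and then the next, $O(1)$, contributions --- each proportional to $\eta$ by the enstrophy balance $\nu\E\fint_0^T\|\grad\omega^\nu\|_\lambda^2=\eta$ --- also cancel. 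What survives is precisely $-\sum_k c(\ell,k)\,\nu\E\fint_0^T|\widehat{\grad\omega^\nu}(k)|^2+o_{\ell\to0}(1)$, with $c(\ell,k)\to0$ as $\ell|k|\to0$ and $c(\ell,k)\to L$ as $\ell|k|\to\infty$; the value $L$ is extracted by rewriting the series tail as an iterated integral of $t\sin t$, giving the bound $|L-c(\ell,k)|\lesssim(\ell|k|)^{-1}$ exactly as in the 3D proof, with $L=-\tfrac14$ for \eqref{eq: 2d S0 limit} and $L=-\tfrac1{24}$ for \eqref{eq: 2d S long limit}. For $S_{vor}/\ell$ there is only a single cancellation: the $2\eta$ from the $m=0$ term of the viscous series cancels the $-2\eta$ from the forcing, leaving the same shape with $L=2$.

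With these reductions in hand I would apply the Small Scale Limit Lemma~\ref{lemma: small scale wave-number sum limit} to $f^\nu:=\sqrt\nu\,\grad\omega^\nu$ (time averaged), whose second moment $\limsup_{\nu\to0}\E\fint_0^T\|\sqrt\nu\,\grad\omega^\nu\|_\lambda^2\,dt=\eta$ is finite by \eqref{eq: enstrophy balance}. The lemma produces $\ell_\nu\to0$ along which $\sum_k c(\ell,k)\,\nu\E\fint_0^T|\widehat{\grad\omega^\nu}(k)|^2$ has limit $L\eta^*$ if and only if there exists $N_\nu\to\infty$ with $\liminf_{\nu\to0}\nu\sum_{|k|\geq N_\nu}\E|\widehat{\grad\omega^\nu}|^2=\eta^*$; the passage between $\limsup_\nu\sup_\ell$ and $\liminf_\nu\inf_\ell$ is handled as in Step~3 of the 3D argument. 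Since after the cancellations $S_{vor}(\ell)/\ell=-\sum_k c(\ell,k)\,\nu\E\fint_0^T|\widehat{\grad\omega^\nu}(k)|^2+o_{\ell\to0}(1)$ with $L=2$, this yields $S_{vor}(\ell)/\ell\to-2\eta^*$, i.e.\ \eqref{eq: vorticity limit}, and likewise $S_{vel}(\ell)/\ell^3\to-L\eta^*=\tfrac14\eta^*$, i.e.\ \eqref{eq: 2d S0 limit}. For \eqref{eq: 2d S long limit} there is in addition the cross-term $\tfrac{2}{\ell^6}\int_0^\ell r^2S_{vel}(r)\,dr=\tfrac{2}{\ell^6}\int_0^\ell r^5\,\tfrac{S_{vel}(r)}{r^3}\,dr$; once \eqref{eq: 2d S0 limit} is known, substituting $S_{vel}(r)/r^3\to\tfrac14\eta^*$ evaluates this to $\tfrac1{12}\eta^*+o_{\ell\to0}(1)$ exactly as in Step~4 of the 3D proof of \eqref{eq: 3d S long limit}, and adding $-L^\parallel\eta^*=\tfrac1{24}\eta^*$ gives $\tfrac18\eta^*$. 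The main obstacle is purely computational: organizing the two orders of cancellation in the velocity cases --- in particular verifying that the $\eta$-proportional constants coming from the $\beta_2$-weighted Fourier series match those coming from the quadratic Taylor coefficients of $a_{vel}$ and $a_{vel}^\parallel$ --- and pinning down the limiting coefficients so that the advertised numbers $2$, $\tfrac14$, $\tfrac18$ emerge; once that is done the filtration and balance steps are verbatim copies of the 3D case.
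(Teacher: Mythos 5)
Your proposal is correct and follows essentially the same route as the paper: KHM identities plus Taylor/Fourier decomposition of the forcing and viscous terms into a cancelling singular $\varepsilon/\ell^2$ part, cancelling $O(1)$ parts proportional to $\eta$ via the enstrophy balance, and a surviving $c(\ell,k)$-weighted sum handled by the small-scale filtration lemma, with the cross term in \eqref{eq: 2d S long limit} evaluated from \eqref{eq: 2d S0 limit} exactly as in the paper's Step 4. The only (cosmetic) differences are that you stop the Taylor expansion of $\Gamma'$ at first order and use the exact Fourier series of $\Gamma''$ where the paper expands to the full $C^3/C^4$ order with a $(\ell-r)^2$-weighted remainder, and that in 2D the tail estimates for $c(\ell,k)$ come from Bessel-function bounds ($J_1,J_2,J_3$) rather than the $t\sin t$ integrals of the 3D case.
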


\subsection{Proof of \eqref{eq: vorticity limit}}\label{sec: 2d direct vorticity limit} 
    First, recall the KHM equation for $S_{vor}:$
    \[
        \frac{S_{vor}(\ell)}{\ell} = -\frac{4\nu}{\ell}\Gamma_{vor}'(\ell) - \frac{4}{\ell^2}\int_0^\ell ra_{vor}(r)\;dr
    \]
    \textbf{Step 1:} We begin with the forcing term and show that 
    \[
         \frac{-4}{\ell^2}\int_0^\ell ra_{vor}(r)\;dr = -2\eta + o_{\ell \to 0}(1)
    \]
    Note that 
    \[
        a_{vor}(0) = \frac{1}{2}\sum_{j}\E\fint_{\T_\lambda^2} |\curl f_j|^2\;dx = \eta.
    \]
    Furthermore, as the $\curl f^k$ are smooth we Taylor expand them around $r = 0$ and use the Peano formulation of the error:
    \begin{align*}
        \frac{-4}{\ell^2}\int_0^\ell ra_{vor}(r)\;dr &=  \frac{-4}{\ell^2}\int_0^\ell ra_{vor}(0)\;dr + \frac{-4}{\ell^2}\int_0^\ell r^2h_1(r)\;dr\\
        &= -2\varepsilon + \frac{-4}{\ell^2}\int_0^\ell r^2h_1(r)\;dr
    \end{align*}
    where $\lim_{r \to 0} h_1(r) = 0$. Note, this implies that $\frac{1}{\ell^2}\int_0^\ell r^2h_1(r)\;dr = o_{\ell \to 0}(1)$.
    \textbf{Step 2:} Next we show that 
    \[
        \frac{-4\nu}{\ell}\Gamma_{vor}'(\ell) = 2\eta - \nu\sum_{k}c(\ell,k)\E|\widehat{\grad \omega^\nu}|^2
    \]
    where $\ds\lim_{\ell|k| \to 0}c(\ell, k) = 0$ and $\ds\lim_{\ell|k| \to \infty}c(\ell,k) = 2$.. 

    From Proposition \ref{prop: KHM relatons and regularity} we know that $\Gamma_{vor} \in C^3$, we Taylor expand $\Gamma'$ about 0 using the Lagrange formulation of the error:
    \[
        \frac{-4\nu}{\ell}\Gamma_{vor}'(\ell) = \frac{-4\nu}{\ell}\Gamma_{vor}'(0) + \frac{-4\nu}{\ell}\int_0^\ell \Gamma_{vor}''(r)\;dr
    \]
    Due to the fact that $\fint_{S^1} n\;dS(n) = 0$ it follows from Fubini that 
    \[
        \Gamma_{vor}'(0) = \E\fint_0^T\fint_{S^1}\fint_{\T_\lambda^2} n_i\partial_i\omega^\nu \omega^\nu\;dxdS(n)\;dt = 0
    \]
    Consider the error term, which we will note is real valued. We apply Fourier transformation to the integral in $x$, Fubini, along with Lemma \ref{lemma: tangential term integral} when $p = 1$: 
    \begin{align*}
        \frac{-4\nu}{\ell}\int_0^\ell \Gamma_{vor}''(r)\;dr &= \frac{4\nu}{\ell}\int_0^\ell \E\fint_0^T\fint_{S^1}\fint_{\T_\lambda^2} n_in_j\partial_i\omega^\nu T_{rn}\partial_j\omega^\nu\;dxdS(n)dtdr\\
        &= \frac{-4\nu}{\ell}\sum_k\int_0^\ell \E\fint_0^T\fint_{S^1} (n\cdot k)^2e^{-irn\cdot k}\E|\widehat{\omega^\nu}|^2\;dS(n)dtdr\\
        &= -\frac{4\nu}{\ell}\sum_{k} \int_0^\ell |k|^{2}\sum_{m=0}^\infty \frac{(-1)^m (r|k|)^{2m}(2m+2)!}{4^{m+1} (2m)!(m+1)!(m+1)!}\;dr\fint_0^T\E|\widehat{\omega^\nu}|^2dt\\
        &= 2\nu\sum_{k}\sum_{m=0}^\infty \frac{(-1)^m (\ell|k|)^{2m}}{4^m(m+1)!m!}\fint_0^T\E|\widehat{\grad\omega^\nu}|^2dt\\
        &= 2\nu\sum_k \E\fint_0^T|\widehat{\grad \omega^\nu}|^2\;dt - 2\nu\sum_{k}\sum_{m=1}^\infty \frac{(-1)^{m+1} (\ell|k|)^{2m}}{4^m(m+1)!m!}\E\fint_0^T|\widehat{\grad\omega^\nu}|^2dt\\
        &= 2\eta - \nu\sum_{k}c(\ell,k)\E\fint_0^T|\widehat{\grad\omega^\nu}|^2dt
    \end{align*}
    Note that the application of Lemma \ref{lemma: tangential term integral} we use the definition of $\beta_2(m+1)$ and simplified the coefficient. Then in the ultimate equality we have used the enstrophy balance \eqref{eq: enstrophy balance} to get the $2\eta$ for the first component. It is clear from the definition of $c(\ell,k)$ that $\ds\lim_{\ell|k| \to 0} c(\ell,k) = 0$. Moreover, using the power series representation of $J_1(x)$\cite{abramowitz1988handbook} (note $J_1$ is the first order Bessel function of the first kind) one can write
    \[
        |2 - c(\ell,k)| = \Big|2\sum_{m=0}^\infty \frac{(-1)^m (\ell|k|)^{2m}}{4^m(m+1)!m!}\Big| = \Big|4\frac{J_1(\ell|k|)}{\ell|k|}\Big|.
    \]
    Since $J_1(x)/x \to 0$ as $x \to \infty$, this implies that $\ds\lim_{\ell|k| \to \infty} c(\ell, k) = 2$.
    
    \textbf{Step 3:} Next we apply Lemma \ref{lemma: small scale wave-number sum limit} to conclude that there exists $\ell_\nu \to 0$ such that 
    \begin{align*}
        \lim_{\ell_I \to 0}\limsup_{\nu \to 0}\sup_{\ell \in [\ell_\nu, \ell_I]} \frac{-4\nu}{\ell}\Gamma_{vor}'(\ell) &= 2\eta - \lim_{\ell_I \to 0}\liminf_{\nu \to 0}\inf_{\ell \in [\ell_\nu, \ell_I]}\nu\sum_{k}c(\ell,k)\E\fint_0^T|\widehat{\grad \omega^\nu}|^2\;dt\\
        &= 2\eta - 2\eta^*
    \end{align*}
    if and only if there exists $N_\nu \to \infty$ such that 
    \[
        \liminf_{\nu \to 0}\nu\sum_{|k| \geq N_\nu}\E\fint_0^T|\widehat{\grad \omega^\nu}|^2\;dt = \eta^*
    \]
    Finally, combine Steps 1-3 with the KHM equation to derive \eqref{eq: vorticity limit}.
    
\subsection{Proof of \eqref{eq: 2d S0 limit}}\label{sec: 2d direct S0}
    First, recall the KHM equation for $S_{vel}:$
    \[
        \frac{S_{vel}}{\ell^3} = -\frac{4\nu}{\ell^3}\Gamma_{vel}'(\ell) - \frac{4}{\ell^4}\int_0^\ell ra_{vel}(r)\;dr
    \]
    \textbf{Step 1:} Once again, we begin with the forcing term and show that 
    \[
        \frac{-4}{\ell^4}\int_0^\ell ra_{vel}(r)\;dr = \frac{-2\varepsilon}{\ell^2} + \frac{\eta}{4} + o_{\ell \to 0}(1)
    \]
    Recall that $a_{vel}$ is defined by
    \[
        a_{vel}(r) = \frac{1}{2}\sum_j\fint_S\fint_{\T_\lambda^2}f_j \cdot T_{rn}f_j\;dxdS(n)
    \]
    As the $f_j$ are smooth we Taylor expand them around $r=0$ and use the Peano formulation of the error (this is because we don't require an in-depth analysis of the error term for the forcing term).
    Moreover, as $\fint_S n_i\;dS(n) = 0$ it follows from Fubini that 
    \[
        \frac{1}{2}\sum_j\fint_S\fint_{\T_\lambda^2} f_j \cdot n_i\partial_if_j\;dxdS(n) = 0.
    \]
    Then by integration by parts we obtain
    \begin{align*}
        \fint_{\T_\lambda^2}f_j\cdot T_{rn}f_j\;dx &= \fint_{\T_\lambda^2}|f_j|^2 + \frac{r^2}{2}n_in_mf_j\cdot \partial_i\partial_m f_j + r^2 h_2(r)\;dx\\
        &= \fint_{\T_\lambda^2}|f_j|^2 - \frac{r^2}{2}n_in_m \partial_if_j\cdot \partial_m f_j\;dx + r^2 h_2(r)
    \end{align*}
    where $\ds\lim_{r \to 0}h_2(r) = 0$.
    Since $\fint_S n_in_m = \frac{1}{2}\delta_{im}$ we use the definition of $\eta$ and $\varepsilon$ to conclude that 
    \begin{align*}
        \frac{-4}{\ell^4}\int_0^\ell ra_{vel}(r)\;dr &= \frac{-2}{\ell^4}\int_0^\ell r\sum_j\fint_{\T_\lambda^2}|f_j|^2\;dxdr + \frac{1}{2\ell^4}\int_0^\ell r^3\sum_j\fint_{\T_\lambda^2}|\grad f_j|^2\;dxdr + \frac{-4}{\ell^4}\int_0^\ell r^3 h_2(r)\;dx\\
        &= \frac{-4}{\ell^4}\int_0^\ell \varepsilon r\;dr + \frac{1}{\ell^4}\int_0^\ell r^3\eta\;dr + \frac{-4}{\ell^4}\int_0^\ell r^3 h_2(r)\;dr\\
        &= \frac{-2\varepsilon}{\ell^2} + \frac{\eta}{4} + \frac{-4}{\ell^4}\int_0^\ell r^3 h_2(r)\;dr
    \end{align*}
    Finally, note that since $\ds\lim_{r \to 0}h_2(r) = 0$ the term $\frac{1}{\ell^4}\int_0^\ell r^3h_2(r)\;dr = o_{\ell \to 0}(1)$.
    
    \textbf{Step 2:} Next we show that 
    \[
        \frac{-4\nu}{\ell^3}\Gamma_{vel}'(\ell) = \frac{2\varepsilon}{\ell^2} - \frac{\eta}{4} - \nu\sum_k c(\ell, k)\E|\widehat{\grad \omega^\nu}|^2
    \]
    where $\ds\lim_{\ell|k| \to 0}c(\ell,k) = 0$ and $\ds\lim_{\ell|k| \to \infty}c(\ell, k) = \frac{-1}{4}$ as $\ell|k| \to \infty$.
    
    From Proposition \ref{prop: KHM relatons and regularity} we know that $\Gamma_{vel} \in C^4$, thus we Taylor expand about 0 while using the Lagrange formulation of the error:
    \[
        \frac{-4\nu}{\ell^3}\Gamma_{vel}'(\ell) = \frac{-4\nu}{\ell^3}\Gamma_{vel}'(0) + \frac{-4\nu}{\ell^2}\Gamma_{vel}''(0) + \frac{-2\nu}{\ell}\Gamma_{vel}'''(0) +  \frac{-2\nu}{\ell^3}\int_0^\ell (\ell - s)^2\Gamma_{vel}''''(s)\;ds
    \]
    From Lemma \ref{lemma: isotropic tensor identity} we have $\fint_{S^1} n\;dS(n) = 0$, $\fint_{S^1} n_in_j\;dS(n) = \frac{1}{2}\delta_{ij}$, and $\fint_{S^1} n_in_jn_k\;dS(n) = 0$, hence by Fubini 
    \begin{align*}
         \Gamma_{vel}'(0) &= -\E\fint_S\fint_{\T_\lambda^2} n_i\partial_iu_m^\nu u_m^\nu\;dxdS(n) = 0\\
         \Gamma_{vel}''(0) &= -\E\fint_S\fint_{\T_\lambda^2} n_in_j \partial_iu_m^\nu \partial_ju_m^\nu\;dxdS(n) = -\frac{1}{2}\E\|\grad u^\nu\|_\lambda^2\\
         \Gamma_{vel}'''(0) &= \E\fint_S\fint_{\T_\lambda^2} n_in_jn_k \partial_i\partial_ju_m^\nu \partial_ku_m^\nu\;dxdS(n) = 0
    \end{align*}
    
    Considering the error term, which we note is real valued. We use Fourier analysis to convert the integral in $x$ to a Fourier series in $k$ (wave-number), and apply Fubini along with Lemma \ref{lemma: tangential term integral} when $p = 2$:   
    \begin{align*}
        \int_0^\ell \frac{-2\nu(\ell -r)^2}{\ell^3}\Gamma_{vel}''''(r)\;dr &= \int_0^\ell \frac{-2\nu(\ell -r)^2}{\ell^3}\E\fint_S\fint_{\T_\lambda^2} n_in_jn_mn_p\partial_m\partial_iu_q^\nu\partial_p\partial_jT_{rn}u_q^\nu\;dxdS(n)dr\\
        &= \sum_k\int_0^\ell \frac{-2\nu(\ell -r)^2}{\ell^3} \fint_S (n \cdot k)^4e^{-irn\cdot k}\E|\widehat{u^\nu}|^2\;dS(n)dr\\
        &= \sum_k \int_0^\ell \frac{-2\nu(\ell -r)^2}{\ell^3}\frac{2|k|^{4}}{4^2}\sum_{m=0}^\infty \frac{(-1)^m (2m+3)!(r|k|)^{2m}}{4^m (2m)!(m+2)!(m+1)!}\E|\widehat{u^\nu}(k)|^2\\  
        &= \frac{-\nu}{4}\sum_k\sum_{m=0}^\infty \frac{(-1)^m (2m+3)!|k|^{2m}}{4^m (2m)!(m+2)!(m+1)!}\int_0^\ell \frac{(\ell - r)^2r^{2m}}{\ell^3}\;dr\E|\widehat{\grad \omega^\nu}(k)|^2\\
        &= \frac{-\nu}{2}\sum_k\sum_{m=0}^\infty \frac{(-1)^m (\ell|k|)^{2m}}{4^m(m+2)!(m+1)!}\E|\widehat{\grad \omega^\nu}(k)|^2\\
        &= \frac{-\nu}{4}\sum_k \E|\widehat{\grad \omega^\nu}(k)|^2 - \frac{\nu}{2}\sum_k\sum_{m=1}^\infty \frac{(-1)^m (\ell|k|)^{2m}}{4^m(m+2)!(m+1)!}\E|\widehat{\grad \omega^\nu}(k)|^2\\
        &=: \frac{-\eta}{4} - \nu\sum_k c(\ell,k)\E|\widehat{\grad \omega^\nu}(k)|^2
    \end{align*}
    Note that when applying Lemma \ref{lemma: tangential term integral} we use the definition of $\beta_2(m+2)$ and simplified the coefficient. Furthermore, we can relate $\widehat{u^\nu}$ and $\widehat{\omega^\nu}$ via the Biot-Savart law:
    \[
        |k|^4|\widehat{u^\nu}|^2 = |k|^4\frac{-|k|^2}{|k|^4}|\widehat{\omega^\nu}|^2 = -|k|^2|\widehat{\omega^\nu}|^2 = |\widehat{\grad \omega^\nu}|^2
    \]
    Then in the ultimate equality we have used the enstrophy balance \eqref{eq: enstrophy balance} to get the $\eta/4$ for the first component. It is clear from the definition of $c(\ell,k)$ that $\ds\lim_{\ell|k| \to 0} c(\ell,k) = 0$. 
    Moreover,
    \begin{align*}
        |\frac{-1}{4} - c(\ell,k)| &= \Big|\frac{-1}{2}\sum_{m=0}^\infty \frac{(-1)^m (\ell|k|)^{2m}}{4^m(m+2)!(m+1)!}\Big| = \Big|\frac{1}{(\ell|k|)^2}\sum_{m=0}\frac{(-1)^m(\ell|k|)^{2m + 2}}{4^m(m+2)!m!(2m+2)}\Big|\\
        &= \Big|\frac{1}{(\ell|k|)^2}\int_0^{\ell|k|}\sum_{m=0}\frac{(-1)^m t^{2m + 3}}{4^m(m+2)!m!}\;dt\Big| = \Big|\frac{4}{(\ell|k|)^2}\int_0^{\ell|k|}tJ_2(t)\;dt\Big|\\
        &\leq \frac{4}{(\ell|k|)^2}\int_0^{\ell|k|}t|J_2(t)|\;dt \leq \frac{C}{(\ell|k|)^2}\int_0^{\ell|k|}t^{1/2}\;dt\\
        &= \frac{C}{(\ell|k|)^{-1/2}}
    \end{align*}
    where we have used the power series representation of the Bessel function $J_2(x)$ and that there exists $C> 0$ such that $|J_2(x)|\leq C|x|^{-1/2}$ for all $x$ \cite{abramowitz1988handbook}. Hence $\ds\lim_{\ell|k| \to \infty} c(\ell,k) = \frac{-1}{4}$.
        
    \textbf{Step 3:} Next we apply Lemma \ref{lemma: small scale wave-number sum limit} to conclude that there exists $\ell_\nu \to 0$ such that 
    \begin{align*}
        \lim_{\ell_I \to 0}\limsup_{\nu \to 0}\sup_{\ell \in [\ell_\nu, \ell_I]} \int_0^\ell \frac{-2\nu(\ell -s)^2}{\ell^3}\Gamma''''(s)\;ds &= \frac{-\eta}{4} - \lim_{\ell_I \to 0}\liminf_{\nu \to 0}\inf_{\ell \in [\ell_\nu, \ell_I]}\nu\sum_{k}c(\ell,k)\E|\widehat{\grad \omega}|^2\\
        &= \frac{-\eta}{4} + \frac{\eta^*}{4}
    \end{align*}
    if and only if there exists $N_\nu \to \infty$ such that 
    \[
        \liminf_{\nu \to 0}\nu\sum_{|k| \geq N_\nu}\E|\widehat{\grad \omega}|^2 = \eta^*
    \]
    Finally, combine Steps 1-3 with the KHM equation to derive \eqref{eq: 2d S0 limit}.
    
\subsection{Proof of \eqref{eq: 2d S long limit}}\label{sec: 2d direct S long}
 Recall the KHM equation for $S_{vel}^\parallel:$
    \[
        \frac{S_{vel}^\parallel(\ell)}{\ell^3} = -\frac{4\nu}{\ell^3}(\Gamma_{vel}^\parallel)'(\ell) + \frac{2}{\ell^6}\int_0^\ell r^2S_{vel}(r)\;dr - \frac{4}{\ell^6}\int_0^\ell r^3a_{vel}^\parallel(r)\;dr
    \]
    \textbf{Step 1:} Once again, we begin with the energy injection term and show that 
    \[
        \frac{-4}{\ell^6}\int_0^\ell r^3a_{vel}^\parallel(r)\;dr = \frac{-\varepsilon}{2\ell^2} + \frac{\eta}{24} + o_{\ell \to 0}(1)
    \]
    This follows exactly the same as in \cite{bedrossian2019sufficient} but we will include it for the sake of completeness.
    Recall that 
    \[
        a_{vel}^\parallel(r) = \frac{1}{2}\sum_j\fint_{S^1}\fint_{\T_\lambda^2}(n\cdot f_j)(n \cdot T_{rn}f_j)\;dxdS(n) 
    \]
    As the $f_j$ are smooth, we Taylor expand about $r=0$ and use the Peano formulation of the error. After integration by parts we have
    \begin{align*}
        \fint_{\T_\lambda^2} (n\cdot f_j)(n \cdot T_{rn}f_j)\;dx &= \fint_{\T_\lambda^2}(n\cdot f_j)^2 - \frac{r^2}{2}n_in_kn_pn_q\partial_k(g_j)_{i}\partial_p(g_j)_{q}\;dx + r^3h_2(r)
    \end{align*}
    where $\ds\lim_{r \to 0}h_2(r) = 0$. Note that this implies $\frac{1}{\ell^6}\int_0^\ell r^6h_2(r)\;dr = o_{\ell \to 0}(1)$.
    From Lemma \ref{lemma: isotropic tensor identity} we have the identities $\fint_S n_in_j\;dS(n) = \frac{1}{2}\delta_{ij}$ and 
    \[
        \fint_S n_in_kn_pn_q\;dS(n) = \frac{1}{8}(\delta_{ik}\delta_{pq} + \delta_{ip}\delta_{kq} + \delta_{iq}\delta_{pk})
    \]
    Since the vector fields $f_j$ are divergence free it follows that
    \begin{align*}
        \sum_j \frac{2}{\ell^6}\int_0^\ell r^3\fint_S\fint_{\T_\lambda^2}(n\cdot f_j)^2\;dxdS(n)dr &= \frac{1}{\ell^6}\int_0^\ell r^3 \varepsilon = \frac{\varepsilon}{2\ell^2}\\
        -\sum_j \frac{2}{\ell^6}\int_0^\ell r^5\fint_S\fint_{\T_\lambda^2}n_in_kn_pn_q\partial_k(f_j)_{i}\partial_p(f_j)_{q}\;dxdS(n)dr &= \frac{-1}{4\ell^6}\sum_j\int_0^\ell r^5\fint_{\T_\lambda^2}|\grad g_j|^2\;dxdr = \frac{-\eta}{24}
    \end{align*}
    Then putting everything together we get
    \[
        \frac{-4}{\ell^6}\int_0^\ell r^3 a_{vel}^\parallel(r)\;dr = \frac{-2}{\ell^6}\int_0^\ell r^3 \sum_j \fint_S\fint_{\T_\lambda^2} (n\cdot f_j)(n \cdot T_{rn}f_j)\;dxdS(n)dr = -\frac{\varepsilon}{2\ell^2} + \frac{\eta}{24} + o_{\ell \to 0}(1)
    \]
    \textbf{Step 2:} Next we show that 
    \[
        \frac{-4\nu}{\ell^3}(\Gamma_{vel}^\parallel)'(\ell) = \frac{\varepsilon}{2\ell^2} - \frac{\eta}{24} + \sum_{k}c(\ell,k)\E|\widehat{\grad \omega}(k)|^2
    \]
    where the coefficients satisfy $\ds\lim_{\ell|k| \to 0}c(\ell,k) = 0$ and $\ds\lim_{\ell|k| \to \infty}c(\ell,k) = \frac{1}{24}$

    From Proposition \ref{prop: KHM relatons and regularity}, $\Gamma_\parallel \in C^4$ so we Taylor expand it about $r=0$ with the Lagrange formulation of the error to get
    \[
        \frac{-4\nu}{\ell^3}(\Gamma_{vel}^\parallel)'(\ell) =  \frac{-4\nu}{\ell^3}(\Gamma_{vel}^\parallel)'(0) - \frac{4\nu}{\ell^2}(\Gamma_{vel}^\parallel)''(0) - \frac{2\nu}{\ell}(\Gamma_{vel}^\parallel)'''(0) - \frac{2\nu}{\ell^3}\int_0^\ell (\ell - r)^2 (\Gamma_{vel}^\parallel)''''(r)\;dr
    \]
    We know from Lemma \ref{lemma: isotropic tensor identity} that $\fint_{S^1} n_in_jn_k\;dS(n) = \fint_{S^1} n_in_jn_kn_pn_q \;dS(n) = 0$ and $\fint_{S^1} n_in_jn_kn_m\;dS(n) = \frac{1}{8}(\delta_{ij}\delta_{km} + \delta_{ik}\delta_{jm} + \delta_{im}\delta_{jk})$. Now we apply Fubini's theorem and the energy balance \eqref{eq: energy balance} to see 
    \begin{align*}
        \frac{-4\nu}{\ell^3}(\Gamma_{vel}^\parallel)'(0) &= \frac{4\nu}{\ell^3}\E\fint_{S^1}\fint_{\T_\lambda^2}n_in_jn_k\partial_ku_i^\nu u_j^\nu\;dxdS(n) = 0\\
        \frac{-4\nu}{\ell^2}(\Gamma_{vel}^\parallel)''(0) &= \frac{4\nu}{\ell^2}\E\fint_{S^1}\fint_{\T_\lambda^2}n_in_jn_kn_p\partial_ku_i^\nu\partial_pu_j^\nu\;dxdS(n) = \frac{\nu}{2\ell^2}\E\|\grad u^\nu\|_{\lambda}^2 = \frac{\varepsilon}{2\ell^2}\\
        \frac{-2\nu}{\ell}(\Gamma_{vel}^\parallel)'''(0) &= \frac{2\nu}{\ell}\E\fint_{S^1}\fint_{\T_\lambda^2}n_in_jn_kn_pn_q\partial_k\partial_pu_i^\nu\partial_qu_j^\nu\;dxdS(n) = 0
    \end{align*}
    Next, we consider the error term, which we will note is real valued. Using Fourier Analysis, Fubini's theorem, and Lemma \ref{lemma: longitudinal term integral} when $p = 2$ it follows that
    \begin{align*}
        \frac{2\nu}{\ell^3}\int_0^\ell (\ell - r)^2(\Gamma_{vel}^\parallel)''''(r)\;dr &= \sum_k\frac{2\nu}{\ell^3}\int_0^\ell (\ell - r)^2\fint_S n_in_j(n \cdot k)^4e^{-ir(n\cdot k)}\E\widehat{u_i}(k)\widehat{u_j}(k)\;dS(n)dr\\
        &= \sum_k\frac{2\nu}{\ell^3}\int_0^\ell (\ell - r)^2\frac{2|k|^2}{4^2}\sum_{m=0}^\infty \frac{(-1)^m(2m+3)!(r|k|)^{2m}}{4^m(m+1)!(m+3)!(2m)!}\E|\widehat{\omega}(k)|^2\\
        &= \frac{\nu}{4}\sum_k\sum_{m=0}^\infty \frac{(-1)^m(\ell|k|)^{2m}}{4^m(m+1)!(m+3)!}\E|\widehat{\grad\omega}(k)|^2\\
        &= \frac{\nu}{24}\E|\widehat{\grad \omega}(k)|^2 - \frac{1}{4}\nu\sum_k\sum_{m=1}^\infty \frac{(-1)^{m+1}(\ell|k|)^{2m}}{4^m(m+1)!(m+3)!}\E|\widehat{\grad\omega}(k)|^2\\
         &=: \frac{\eta}{24} - \sum_kc(\ell,k)\E|\widehat{\grad \omega}(k)|^2
    \end{align*}
    Note that we applied the definition of $\beta_2(m+3)$ and simplified the coefficient when applying Lemma \ref{lemma: longitudinal term integral}. Also by construction $\ds\lim_{\ell|k| \to 0}c(\ell,k) = 0$. Moreover, 
    \begin{align*}
        \Big|\frac{1}{24} - c(\ell,k)\Big| &= \Big|\frac{1}{4}\sum_{m=0}^\infty \frac{(-1)^m(\ell|k|)^{2m}}{4^m(m+1)!(m+3)!}\Big| = \Big|\frac{1}{2(\ell|k|)^2}\sum_{m=0}\frac{(-1)^m(\ell|k|)^{2m + 2}}{4^mm!(m+3)!(2m+2)}\Big|\\
        &= \Big|\frac{1}{2(\ell|k|)^2}\int_0^{\ell|k|}\sum_{m=0}\frac{(-1)^m t^{2m + 3}}{4^mm!(m+3)!}\;dt\Big| = \Big|\frac{4}{(\ell|k|)^2}\int_0^{\ell|k|}J_3(t)\;dt\Big|\\
        &\leq \frac{4}{(\ell|k|)^2}\int_0^{\ell|k|}|J_3(t)|\;dt \leq \frac{C}{\ell|k|}
    \end{align*}
    where we have used the power series representation of the Bessel function $J_3(x)$ and the fact that there exists $C > 0$ such that $|J_3(x)| \leq C$ uniformly  for all $x$\cite{abramowitz1988handbook}. Hence $\ds\lim_{\ell|k| \to \infty} c(\ell,k) = \frac{1}{24}$.

    \textbf{Step 3:} Apply Lemma \ref{lemma: small scale wave-number sum limit} to conclude that there exists $\ell_\nu \to 0$ such that 
    \begin{align*}
         \frac{-4\nu}{\ell^3}(\Gamma_{vel}^\parallel)'(\ell) = \frac{\varepsilon}{2\ell^2} - \frac{\eta}{24} - \frac{\eta^*}{24} + o_{\ell \to 0}(1)
    \end{align*}
    if and only if there exists $N_\nu \to \infty$ such that 
    \[
        \liminf_{\nu \to 0}\nu\sum_{|k| \geq N_\nu}\E|\widehat{\grad \omega^\nu}|^2 = \eta^*.
    \]
    \textbf{Step 4:} Next we use Equation \eqref{eq: 2d S0 limit} which was proved in Section \ref{sec: 2d direct S0}, to conclude that the if and only if established in step 3, also equivalently shows that
    \[
        \frac{2}{\ell^6}\int_0^\ell r^2S_{vel}(r)\;dr = \frac{2}{\ell^6}\int_0^\ell r^5\frac{S_{vel}(r)}{r^3}\;dr = \frac{\eta^*}{12} + o_{\ell \to 0}(1)  
    \]

    Finally combine Steps 1-4 with the KHM equation to derive \eqref{eq: 2d S long limit}.

\section{The Inverse Cascade}\label{sec: 2d inverse cascade}
In this section, we characterize the existence of an inverse cascade in both 2D and 3D.
\begin{theorem}[Characterization of the Inverse Cascade]
    Suppose that $\lambda = \lambda(\nu) < \infty$ is a continuous monotone increasing function such that $\ds\lim_{\nu \to 0}\lambda = \infty$. Let $\{u\}_{\nu > 0}$ be a sequence of statistically stationary solutions to the Navier-Stokes equations \eqref{eq: incompressible NSE} on $\T_\lambda^d$ with mean-zero, divergent free forcing correlations $f_j$. Then there exists a decreasing sequence $M_\nu$ such that $\ds\lim_{\nu \to 0}M_\nu = 0$ and 
    \[
       \liminf_{\nu \to 0}\nu\sum_{|k| \leq M_\nu}\E|\widehat{\grad u^\nu}(k)|^2 = \varepsilon^*
    \]
    if and only if there exists $\tilde{\ell}_\nu \in (1, \lambda)$ such that $\ds \lim_{\nu \to 0}\tilde{\ell}_\nu = \infty$ and
    \begin{align}
        \lim_{\ell_I \to \infty}\limsup_{\nu \to 0}\sup_{\ell \in [\ell_I, \tilde{\ell}_\nu]}\Big|\frac{S_{vel}(\ell)}{\ell} - 4\beta_d(1)\varepsilon^*\Big| = 0\label{eq: S0 limit inverse}\\
        \lim_{\ell_I \to \infty}\limsup_{\nu \to 0}\sup_{\ell \in [\ell_I, \tilde{\ell}_\nu]}\Big|\frac{S_{vel}^\parallel(\ell)}{\ell} - \big(4\beta_d(2) + \frac{8\beta_d(1)}{d+2}\big)\varepsilon^*\Big| = 0\label{eq: S long limit inverse}
    \end{align}
\end{theorem}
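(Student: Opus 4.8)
The plan is to repeat the arguments of Sections~\ref{sec: 3d direct cascade}--\ref{sec: 2d direct cascade} at \emph{large} scales $\ell\to\infty$, feeding the resulting frequency sums into the Large Scale Limit Lemma (Lemma~\ref{lemma: large scale wave-number sum limit}) rather than its small-scale counterpart: an inverse cascade is about energy that persists near $k=0$, so one wants a Fourier multiplier that vanishes on $\ell|k|\to0$ and stabilizes on $\ell|k|\to\infty$, applied to $f^\nu:=\sqrt{\nu}\,\grad u^\nu$. For this choice $\widehat{f^\nu}(0)=0$ automatically, $\E|\widehat{f^\nu}(k)|^2=\nu\E\fint_0^T|\widehat{\grad u^\nu}(k)|^2$, and $\Delta:=\limsup_{\nu\to0}\E\|f^\nu\|_\lambda^2=\limsup_{\nu\to0}\big(\varepsilon-\E D(u^\nu)\big)\le\varepsilon$ by the energy balance \eqref{eq: energy balance} (with $\E D(u^\nu)\equiv0$ in $2$D). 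Dividing the identities \eqref{eq: velocity trans KHM}--\eqref{eq: velocity long KHM} by $\ell$ reduces matters to the large-$\ell$ behaviour of $-\tfrac{4\nu}{\ell}\Gamma_{vel}'(\ell)$, $-\tfrac{4\nu}{\ell}(\Gamma_{vel}^\parallel)'(\ell)$, $\tfrac{2}{\ell^{d+2}}\int_0^\ell r^dS_{vel}(r)\,dr$, and the forcing correlation terms; since Lemma~\ref{lemma: large scale wave-number sum limit} is an equivalence, both implications of the theorem will drop out simultaneously.

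The one genuinely new step is showing the forcing terms vanish as $\ell\to\infty$. Writing $a_{vel}(r)=\tfrac12\sum_j\sum_{k\ne0}|\widehat{f_j}(k)|^2\phi_d(r|k|)$ with $\phi_d(s)=\fint_{S^{d-1}}e^{is\,n\cdot e_1}\,dS(n)$ a Bessel-type profile obeying $|\phi_d(s)|+|\phi_d'(s)|\lesssim(1+s)^{-(d-1)/2}$, the rescaling $s=r|k|$ gives $\big|\tfrac{4}{\ell^d}\int_0^\ell r^{d-1}a_{vel}(r)\,dr\big|\lesssim\ell^{-(d+1)/2}+\ell^{-d}$, uniformly in $\nu$, hence $o_{\ell\to\infty}(1)$; the mean-zero hypothesis is exactly what removes the non-decaying $k=0$ mode (otherwise this term would tend to $\tfrac{2}{d}\sum_j|\widehat{f_j}(0)|^2\ne0$), and one uses that the forcing injects energy at a fixed range of scales, so its spectrum stays in a shell $\{0<c_0\le|k|\le c_1\}$ and the weights remain bounded as $\lambda=\lambda(\nu)\to\infty$. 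The identical estimate with $a_{vel}^\parallel$ in place of $a_{vel}$, plus a Fubini reordering as in Step~1 of the longitudinal computation of Section~\ref{sec: 3d direct cascade}, kills the remaining forcing contributions, including the one hidden inside $\tfrac{2}{\ell^{d+2}}\int_0^\ell r^dS_{vel}(r)\,dr$.

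For the correlation terms one cannot Taylor-expand at $0$; instead, using $\Gamma_{vel}'(0)=0$ together with Plancherel and Lemma~\ref{lemma: tangential term integral} at $p=0$ (the power series of $\phi_d$), the same computation as in the direct-cascade proofs gives $-\tfrac{4\nu}{\ell}\Gamma_{vel}'(\ell)=\sum_k\big(4\beta_d(1)-c(\ell,k)\big)\nu\E\fint_0^T|\widehat{\grad u^\nu}(k)|^2$, where $c(\ell,k)$ is the \emph{same} coefficient constructed there: $c\to0$ as $\ell|k|\to0$ and, via the bound $|4\beta_d(1)-c(\ell,k)|\lesssim(\ell|k|)^{-1}$ already recorded, $c\to4\beta_d(1)$ as $\ell|k|\to\infty$. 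Likewise Lemma~\ref{lemma: longitudinal term integral} at $p=0$ yields $-\tfrac{4\nu}{\ell}(\Gamma_{vel}^\parallel)'(\ell)=\sum_k\big(4\beta_d(2)-c_\parallel(\ell,k)\big)\nu\E\fint_0^T|\widehat{\grad u^\nu}(k)|^2$ with $c_\parallel$ of the same type, while the term $\tfrac{2}{\ell^{d+2}}\int_0^\ell r^dS_{vel}(r)\,dr$ contributes, in the limit, an additional $\tfrac{8\beta_d(1)}{d+2}\varepsilon^*$ — obtained either by a direct Fourier expansion after inserting $S_{vel}(r)=-4\nu\Gamma_{vel}'(r)-\tfrac{4}{r^{d-1}}\int_0^rs^{d-1}a_{vel}(s)\,ds$ and integrating in $r$ first (producing $\sum_k\big(\tfrac{8\beta_d(1)}{d+2}-c_\star(\ell,k)\big)\nu\E\fint_0^T|\widehat{\grad u^\nu}(k)|^2+o_{\ell\to\infty}(1)$ with $c_\star$ of the same type), or by substituting the law \eqref{eq: S0 limit inverse} once established, on a window $[\sqrt{\ell_I},\tilde\ell_\nu]$, and controlling $\int_0^{\sqrt{\ell_I}}$ with the a priori bound $|S_{vel}(r)|\lesssim r$ supplied by the KHM identity.

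Finally apply Lemma~\ref{lemma: large scale wave-number sum limit}. Combining the above, $\tfrac{S_{vel}(\ell)}{\ell}=4\beta_d(1)\,\nu\E\fint_0^T\|\grad u^\nu\|_\lambda^2-\sum_kc(\ell,k)\nu\E\fint_0^T|\widehat{\grad u^\nu}(k)|^2+o_{\ell\to\infty}(1)$; the lemma applied to $c(\ell,k)$ (limits $0$ and $L=4\beta_d(1)$) and $f^\nu$ produces $\tilde\ell_\nu\to\infty$, which since $\lambda\to\infty$ may be taken in $(1,\lambda)$, such that $\lim_{\ell_I\to\infty}\limsup_{\nu\to0}\sup_{\ell\in[\ell_I,\tilde\ell_\nu]}\sum_kc(\ell,k)\nu\E\fint_0^T|\widehat{\grad u^\nu}(k)|^2=4\beta_d(1)(\Delta-\varepsilon^*)$ exactly when $\liminf_{\nu\to0}\nu\sum_{|k|\le M_\nu}\E\fint_0^T|\widehat{\grad u^\nu}(k)|^2=\varepsilon^*$ for some $M_\nu\to0$; subtracting, the total dissipation $\nu\E\fint_0^T\|\grad u^\nu\|_\lambda^2$ cancels $\Delta$ and leaves $4\beta_d(1)\varepsilon^*$, which is \eqref{eq: S0 limit inverse} in both directions. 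For \eqref{eq: S long limit inverse} the multiplier of $\nu\E\fint_0^T|\widehat{\grad u^\nu}(k)|^2$ becomes $c_\parallel(\ell,k)+c_\star(\ell,k)$, with limits $0$ (at $\ell|k|\to0$) and $4\beta_d(2)+\tfrac{8\beta_d(1)}{d+2}$ (at $\ell|k|\to\infty$), so the same lemma with the same $M_\nu,\tilde\ell_\nu$ gives $\tfrac{S_{vel}^\parallel(\ell)}{\ell}\to\big(4\beta_d(2)+\tfrac{8\beta_d(1)}{d+2}\big)\varepsilon^*$; the stated numerical constants reduce to $\beta_2(1)=\tfrac12,\ \beta_2(2)=\tfrac18,\ \beta_3(1)=\tfrac13,\ \beta_3(2)=\tfrac1{15}$ (Lemma~\ref{lemma: isotropic tensor identity}). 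I expect the real difficulty to be the uniform-in-$\nu$ smallness of the forcing terms at large $\ell$, where the diverging torus size $\lambda(\nu)$ (admitting Fourier modes arbitrarily near $k=0$) and the mean-zero/low-frequency structure of the forcing must be played against each other; the possible non-convergence of $\E D(u^\nu)$ in $3$D is only a bookkeeping nuisance, absorbed by the cancellation of $\Delta$ and the two-sided estimates internal to Lemma~\ref{lemma: large scale wave-number sum limit} (or, if one prefers, by passing to a subsequence along which $\E D(u^\nu)$ converges, which is innocuous since the characterization is phrased with $\liminf$).
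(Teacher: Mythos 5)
Your proposal reproduces the paper's architecture almost exactly: divide the KHM identities \eqref{eq: velocity trans KHM}--\eqref{eq: velocity long KHM} by $\ell$, expand the dissipative correlations in Fourier so that they become multipliers $c(\ell,k)$ acting on $\nu\E\fint_0^T|\widehat{\grad u^\nu}(k)|^2$ with limit $0$ as $\ell|k|\to 0$ and a nonzero constant as $\ell|k|\to\infty$, feed this into Lemma \ref{lemma: large scale wave-number sum limit}, use the energy balance \eqref{eq: energy balance} to cancel the total dissipation against $\Delta$, and recycle \eqref{eq: S0 limit inverse} to evaluate $\frac{2}{\ell^{d+2}}\int_0^\ell r^d S_{vel}(r)\,dr$ in the proof of \eqref{eq: S long limit inverse}; your constants $\beta_2(1)=\tfrac12$, $\beta_2(2)=\tfrac18$, $\beta_3(1)=\tfrac13$, $\beta_3(2)=\tfrac1{15}$ agree with the paper. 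Two points of divergence are worth noting. First, the forcing terms: the paper expands $a_{vel}$ and $a_{vel}^\parallel$ in Fourier as well and applies the \emph{same} large-scale lemma to the sums $\sum_j\sum_k c(\ell,k)|\widehat{f_j}(k)|^2$, so that only the $\widehat{f_j}(0)$ contribution survives and the mean-zero hypothesis kills it; you instead use a stationary-phase/Bessel decay bound for the spherical average, which obliges you to add a spectral-shell assumption ($0<c_0\le|k|\le c_1$ on $\supp\widehat{f_j}$) that is not in the theorem statement. This buys you a quantitative, uniform-in-$\nu$ rate, but strictly narrows the hypotheses; note however that the paper's own application of the lemma to the forcing implicitly assumes the low-frequency forcing content converges to the zero mode alone as $\lambda(\nu)\to\infty$, so your extra hypothesis is closer to making an implicit assumption explicit than to a genuinely different mechanism. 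Second, a technical slip: you claim the correlation terms are handled by Lemma \ref{lemma: tangential term integral} and Lemma \ref{lemma: longitudinal term integral} at $p=0$ and assert one ``cannot Taylor-expand at $0$,'' whereas the paper's computation (which you then invoke as ``the same computation as in the direct-cascade proofs'') uses the exact identity $\Gamma'(\ell)=\Gamma'(0)+\int_0^\ell\Gamma''(r)\,dr$ — valid for \emph{all} $\ell$, not just small $\ell$ — and applies the lemmas at $p=1$, since $\Gamma''$ carries $(n\cdot k)^2$; a direct expansion of $\Gamma'(\ell)$ itself would produce odd powers of $n\cdot k$ not covered by those lemmas. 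Since the multiplier you write down, $4\beta_d(1)-c(\ell,k)$ (resp. $4\beta_d(2)-c_\parallel(\ell,k)$) with the stated limits, is the correct one, this is a mislabeling rather than a gap. Your Step-4 treatment of $\int_0^\ell r^dS_{vel}(r)\,dr$ (splitting at $\sqrt{\ell_I}$ and using the a priori bound $|S_{vel}(r)|\lesssim r$ from the KHM identity), and your handling of the possible non-convergence of $\E D(u^\nu)$ by subsequences or cancellation, are in fact more careful than the paper's terse statements of the corresponding steps.
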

Explicitly the coefficients can be computed as 
\[
    4\beta_d(1) = \begin{cases}
        2 & d = 2\\
        4/3 & d = 3
    \end{cases}
    \quad 4\beta_d(2) + \frac{8\beta_d(1)}{d+2} = \begin{cases}
        3/2 & d = 2\\
        4/5 & d = 3
    \end{cases}
\]
The proof of Theorem \ref{thm: inverse cascade characterization} is split into two different subsections, one for each of the flux laws. While the details are slightly different for each one, similar to the direct cascade case the mechanics are similar for both flux laws.  
\subsection{Proof of \eqref{eq: S0 limit inverse}}\label{sec: proof of S0 inverse}
    First, recall the KHM equation for $S_{vel}$:
    \[
         \frac{S_{vel}(\ell)}{\ell} = -\frac{4\nu}{\ell}\Gamma_{vel}'(\ell) - \frac{4}{\ell^d}\int_0^\ell r^{d-1}a_{vel}(r)\;dr
    \]
    \textbf{Step 1:} Beginning with the source term $a_{vel}(r)$, we apply Fourier analysis, Fubini and Lemma \ref{lemma: tangential term integral} with $p = 0$ to derive:
    \begin{align*}
        \frac{4}{\ell^d}\int_0^\ell r^{d-1}a_{vel}(r)\;dr &= \frac{2}{\ell^d}\sum_j\sum_k\int_0^\ell \fint_{\S^{d-1}}r^{d-1}e^{-irn\cdot k}\;dS(n)dr|\widehat{f_j}|^2\\
        &= \frac{2}{\ell^d}\sum_j\sum_k\int_0^\ell r^{d-1}\sum_{m=0}^\infty \beta_d(m)\frac{(-1)^m(r|k|)^{2m}}{2^mm!}\;dr|\widehat{f_j}|^2\\
        &= 2\sum_j\sum_k \sum_{m=0}^\infty \beta_d(m)\frac{(-1)^m(\ell|k|)^{2m}}{2^m(2m+d)m!}|\widehat{f_j}|^2\\
        &:= \frac{2\beta_d(0)}{d}\sum_j\|f_j\|_{L^2}^2 + \sum_j\sum_k c(\ell,k)|\widehat{f_j}|^2
    \end{align*}
    Recall that when $d=2$, $\beta_2(m) = \ds\frac{1}{2^mm!}$ so
    \[
        \big|\frac{2\beta_2(0)}{2} + c(\ell,k)\big| = \Big|\sum_{m=0}^\infty \frac{(-1)^m(\ell|k|)^{2m}}{4^m(m+1)!m!}\Big| = \Big|\frac{J_1(\ell|k|)}{\ell|k|}\Big| \to 0
    \]
    as $\ell|k| \to \infty$. Here we have used the power series representation of the Bessel function of the first kind and that $|J_1(x)|<1$ uniformly for all $x\in \R$\cite{abramowitz1988handbook}. Alternatively when $d=3$, recall that $\beta_3(m) = \frac{2^mm!}{(2m+1)!}$ so
    \begin{align*}
        \big|\frac{2\beta_d(0)}{d} + c(\ell,k)\big| &= \Big|\sum_{m=0}^\infty \frac{(-1)^m(\ell|k|)^{2m}}{(2m+3)(2m+1)!}\Big|\\
        &= \Big|\frac{2}{(\ell|k|)^3}\int_0^{\ell|k|} \sum_{m=0}^\infty \frac{(-1)^mr^{2m+2}}{(2m+1)!}\;dr\Big|\\
        &= \Big|\frac{2}{(\ell|k|)^3}\int_0^{\ell|k|} r\sin(r)\;dr\Big|\\
        &\leq \frac{1}{\ell|k|} \to 0
    \end{align*}
    as $\ell|k| \to \infty$.
    Hence in both cases $c(\ell,k) \to -\frac{2\beta_d(0)}{d}$. 
    By Lemma \ref{lemma: large scale wave-number sum limit} it follows that 
    \begin{align*}
        \lim_{\ell_I \to \infty}\limsup_{\nu \to 0}\sup_{\ell \in (\ell_I, \tilde{\ell_\nu})} \frac{4}{\ell^d}\int_0^\ell r^{d-1}a_{vel}(r)\;dr &=
        \frac{2\beta_d(0)}{d}\sum_j\|f_j\|_{L^2}^2 -\frac{2\beta_d(0)}{d}\sum_j(\|f_j\|_{L^2}^2 - |\widehat{f_j}(0)|^2)\\
        &= \frac{2\beta_d(0)}{d}\sum_j|{f_j}(0)|^2 = 0
    \end{align*}
    as the $f_j$ are mean zero.
    
    \textbf{Step 2:} Next, we use Fourier analysis to translate the energy correlation term $\Gamma_{vel}$ as
    \[
        \frac{-4\nu}{\ell}\Gamma_{vel}'(\ell) = 4\beta_d(1)\varepsilon + \nu\sum_{k}c(\ell,k)\E\fint_0^T|\widehat{\grad u^\nu}(k)|^2
    \]
    where $\ds\lim_{\ell|k| \to 0}c(\ell,k) = 0$ and $\ds\lim_{\ell|k| \to \infty}c(\ell,k) = -4\beta_d(1)$. 
    
    By Proposition \ref{prop: KHM relatons and regularity} we know $\Gamma_{vel} \in C^4$. Thus we Taylor expand $\Gamma_{vel}'$ about 0 and use the Lagrange formulation of the error:
    \[
        \frac{-4\nu}{\ell}\Gamma_{vel}'(\ell) = \frac{-4\nu}{\ell}\Gamma_{vel}'(0) + \frac{-4\nu}{\ell}\int_0^\ell \Gamma_{vel}''(r)\;dr
    \]
    From the proof of Lemma \ref{lemma: isotropic tensor identity} we know that $\fint_{S^{d-1}} n\;dS(n) = 0$, so after applying Fubini's theorem we have
    \[
        \Gamma_{vel}'(0) = -\E\fint_{S^{d-1}}\fint_0^T\fint_{\T_\lambda^d} n_i\partial_iu_m^\nu u_m^\nu\;dxdtdS(n) = 0
    \]
    Next, we consider the error term, which is real valued. We use Plancheral's theorem to rewrite the integral in $x$ in terms of the Fourier series and apply both Fubini and Lemma \ref{lemma: tangential term integral} when $p = 1$ to get:
    \begin{align*}
        \frac{-4\nu}{\ell}\int_0^\ell \Gamma_{vel}''(r)\;dr &= \frac{-4\nu}{\ell}\sum_k\int_0^\ell \E\fint_{S^{d-1}} (n\cdot k)^2e^{-irn\cdot k}\E\fint_0^T|\widehat{u^\nu}|^2\;dtdS(n)dr\\
        &= -\frac{4\nu}{\ell}\sum_{k} \int_0^\ell |k|^{2}\sum_{m=0}^\infty \beta_d(m+1) \frac{(-1)^m (r|k|)^{2m}(2m+2)!}{2^{m+1} (2m)!(m+1)!}\;dr\E\fint_0^T|\widehat{u^\nu}|^2\\
        &= 4\nu\sum_{k} \sum_{m=0}^\infty \beta_d(m+1)\frac{(-1)^m (\ell|k|)^{2m}}{2^{m}m!}\E\fint_0^T|\widehat{\grad u^\nu}|^2\\
        &= 4\beta_d(1)\nu\sum_{k}\E\fint_0^T|\widehat{\grad u^\nu}|^2 + 4\nu\sum_{k} \sum_{m=1}^\infty \beta_d(m+1)\frac{(-1)^m (\ell|k|)^{2m}}{2^{m}m!}\E\fint_0^T|\widehat{\grad u^\nu}|^2\\
        &=: 4\beta_d(1)\varepsilon - 4\beta_d(1)\E D(u^\nu) + \sum_k c(\ell,k)\E|\widehat{\grad u^\nu}|^2
    \end{align*}
    Note that the final equality follows from the energy balance \ref{eq: energy balance}.
    
    It is clear from the definition of $c(\ell,k)$ that $\ds\lim_{\ell|k| \to 0}c(\ell,k) = 0$. Moreover, using the same argument as in step 1 we can show
    \[
        |4\beta_d(1) + c(\ell,k)| = \Big|4\sum_{m=0}^\infty \beta_d(m+1)\frac{(-1)^m (\ell|k|)^{2m}}{2^mm!}\Big| \leq \frac{C}{\ell|k|}
    \]
    Hence 
    \[
        \lim_{\ell|k| \to \infty}c(\ell,k) = -4\beta_d(1)
    \]
    
    \textbf{Step 3:} Apply Lemma \ref{lemma: large scale wave-number sum limit} to conclude that there exists $\tilde{\ell}_\nu \to \infty$ such that 
    \begin{align*}
        \lim_{\ell_I \to \infty}\limsup_{\nu \to 0}\sup_{\ell \in [\ell_I, \tilde{\ell}_\nu]} \frac{-4\nu}{\ell}\Gamma'(\ell) &= 2\varepsilon + \lim_{\ell_I \to \infty}\limsup_{\nu \to 0}\sup_{\ell \in [\ell_I, \tilde{\ell}_\nu]}\nu\sum_{k}c(\ell,k)\E|\widehat{\grad u}|^2\\
        &= 4\beta_d(1)(\varepsilon - \E D(u^\nu)) - 4\beta_d(1)\Big(\varepsilon - \E D(u^\nu) - \varepsilon^*\Big)\\
        &= 4\beta_d(1)\varepsilon^*
    \end{align*}
    if and only if there exists $M_\nu \to 0$ such that
    \[
        \liminf_{\nu \to 0}\nu\sum_{|k| \leq M_\nu}\E|\widehat{\grad u^\nu}|^2 = \varepsilon^*
    \]
    
    Finally, combine Steps 1-3 with the KHM equation to derive \eqref{eq: S0 limit inverse}.

\subsection{Proof of \eqref{eq: S long limit inverse}}
 Recall the KHM equation for $S_{vel}^\parallel:$
    \[
        \frac{S_{vel}^\parallel(\ell)}{\ell} = -\frac{4\nu}{\ell}(\Gamma_{vel}^\parallel)'(\ell) + \frac{2}{\ell^{d+2}}\int_0^\ell r^{d}S_{vel}(r)\;dr - \frac{4}{\ell^{d+2}}\int_0^\ell r^{d+1}a_{vel}^\parallel(r)\;dr
    \]
    \textbf{Step 1:} 
    Beginning with the source term $a_{vel}^\parallel(r)$, we apply Fourier analysis, Fubini and Lemma \ref{lemma: longitudinal term integral} with $p = 0$ to derive:
    \begin{align*}
        \frac{4}{\ell^{d+2}}\int_0^\ell r^{d+1}a_{vel}^\parallel(r)\;dr &= \frac{2}{\ell^{d+2}}\sum_j\sum_k\int_0^\ell \fint_{\S^{d-1}}r^{d+1}n_an_be^{-irn\cdot k}\;dS(n)dr\widehat{f_j}_a\widehat{f_j}_b\\
        &= \frac{2}{\ell^{d+2}}\sum_j\sum_k\sum_{m=0}^\infty \int_0^\ell \beta_d(m+1)r^{d+1}\frac{(-1)^m(r|k|)^{2m}}{2^mm!}\;dr|\widehat{f_j}|^2\\
        &= 2\sum_j\sum_k\sum_{m=0}^\infty \beta_d(m+1)\frac{(-1)^m(\ell|k|)^{2m}}{2^mm!(2m+d+2)}|\widehat{f_j}|^2\\
        &= \frac{2\beta_d(1)}{d+2}\sum_j\|f_j\|_{L^2}^2 + 2\sum_j\sum_k\sum_{m=0}^\infty \beta_d(m+1)\frac{(-1)^m(\ell|k|)^{2m}}{2^mm!(2m+d+2)}|\widehat{f_j}|^2\\
        &:= \frac{2\beta_d(1)}{d+2}\sum_j\|f_j\|_{L^2}^2 + \sum_j\sum_k c(\ell,k)|\widehat{f_j}|^2
    \end{align*}
    Recall that when $d=2$, $\beta_2(m) = \ds\frac{1}{2^mm!}$ so
    \[
        \big|\frac{2\beta_2(1)}{4} + c(\ell,k)\big| = \Big|\frac{1}{2}\sum_{m=0}^\infty \frac{(-1)^m(\ell|k|)^{2m}}{4^mm!(m+2)!}\Big| = \Big|\frac{2J_2(\ell|k|)}{(\ell|k|)^2}\Big| \to 0 \quad \text{ as } \ell|k| \to \infty
    \]
    Here we have used the power series representation of the Bessel function of the first kind and that $|J_2(x)|<1$ uniformly for all $x\in \R$\cite{abramowitz1988handbook}. Alternatively when $d=3$, recall that $\beta_3(m) = \frac{2^mm!}{(2m+1)!}$ so
    \begin{align*}
        \big|\frac{2\beta_d(1)}{5} + c(\ell,k)\big| &= \Big|2\sum_{m=0}^\infty \frac{(-1)^m(\ell|k|)^{2m}}{(2m+5)(2m+3)(2m+1)!}\Big|\\
        &= \Big|\frac{2}{(\ell|k|)^5}\int_0^{\ell|k|} \sum_{m=0}^\infty \frac{(-1)^mr^{2m+4}}{(2m+3)(2m+1)!}\;dr\Big|\\
        &= \Big|\frac{2}{(\ell|k|)^5}\int_0^{\ell|k|} r\int_0^r \sum_{m=0}^\infty \frac{(-1)^mq^{2m+2}}{(2m+1)!}\;dqdr\Big|\\
        &= \Big|\frac{2}{(\ell|k|)^5}\int_0^{\ell|k|} r\int_0^r q\sin(q)\;dqdr\Big|\\
        &\leq \frac{C}{(\ell|k|)} \to 0 \quad \text{ as } \ell|k| \to \infty
    \end{align*}
    Hence in both cases $\lim_{\ell|k| \to \infty}c(\ell,k) = -\frac{2\beta_d(1)}{d+2}$. Moreover, by definition the coefficients $c(\ell,k) \to 0$ as $\ell|k| \to 0$.  
    By Lemma \ref{lemma: large scale wave-number sum limit} it follows that 
    \begin{align*}
        \lim_{\ell_I \to \infty}\limsup_{\nu \to 0}\sup_{\ell \in (\ell_I, \tilde{\ell_\nu})} \frac{4}{\ell^d}\int_0^\ell r^{d-1}a_{vel}(r)\;dr &=
        \frac{2\beta_d(1)}{d+2}\sum_j\|f_j\|_{L^2}^2 -\frac{2\beta_d(1)}{d+2}\sum_j(\|f_j\|_{L^2}^2 - |\widehat{f_j}(0)|^2)\\
        &= \frac{2\beta_d(1)}{d+2}\sum_j|{f_j}(0)|^2 = 0
    \end{align*}
    as the $f_j$ are all mean zero.
    
    \textbf{Step 2:} Next we show that 
    \[
        \frac{-4\nu}{\ell}(\Gamma_{vel}^\parallel)'(\ell) = 4\beta_d(2)\varepsilon - \sum_{k}c(\ell,k)\E|\widehat{\omega}(k)|^2
    \]
    where the coefficients satisfy $\ds\lim_{\ell|k| \to 0}c(\ell,k) = 0$ and $\ds\lim_{\ell|k| \to \infty}c(\ell,k) = -4\beta_d(2)$ (Note these coefficients $c(\ell,k)$ may be different from the ones found in Step 1). 

    From Proposition \ref{prop: KHM relatons and regularity}, $\Gamma_{vel}^\parallel \in C^2$. Thus we Taylor expand $(\Gamma_{vel}^\parallel)'$ about $r = 0$ and use the Lagrange formulation of the error to get
    \[
        \frac{-4\nu}{\ell}(\Gamma_{vel}^\parallel)'(\ell) =  \frac{-4\nu}{\ell}(\Gamma_{vel}^\parallel)'(0) - \frac{4\nu}{\ell}\int_0^\ell (\Gamma_{vel}^\parallel)''(r)\;dr
    \]
    As $\fint_{S^{d-1}} n_in_jn_k\;dS(n) = 0$ it follows from Fubini's theorem that 
    \begin{align*}
        \frac{-4\nu}{\ell}(\Gamma_{vel}^\parallel)'(0) &= \frac{4\nu}{\ell}\E\fint_{S^{d-1}}\fint_{\T_\lambda^d}n_in_jn_k\partial_ku_iu_j\;dxdS(n) = 0
    \end{align*}
    Next, consider the error term, which we will note is real valued. Using Fourier Analysis, Fubini's Theorem, and Lemma \ref{lemma: longitudinal term integral} when $p = 1$ it follows that 
    \begin{align*}
        \frac{-4\nu}{\ell}\int_0^\ell (\Gamma_{vel}^\parallel)''(r)\;dr &= \sum_k\frac{4\nu}{\ell}\int_0^\ell \fint_{S^{d-1}} n_in_j(n \cdot k)^2e^{-ir(n\cdot k)}\E\fint_0^T\widehat{u_i}(k)\widehat{u_j}(k)\;dtdS(n)dr\\
        &= \sum_k\frac{4\nu}{\ell}\int_0^\ell\sum_{m=0}^\infty \beta_d(m+2)\frac{(-1)^m(2m+2)!(r|k|)^{2m}}{2^{m+1}(m+1)!(2m)!}\;dr\E\fint_0^T|\widehat{\grad u^\nu}(k)|^2\\
        &= 4\nu\sum_k\sum_{m=0}^\infty \beta_d(m+2)\frac{(-1)^m(\ell|k|)^{2m}}{2^{m}m!}\E\fint_0^T|\widehat{\grad u^\nu}(k)|^2\\
        &= 4\nu\beta_d(2)\E\fint_0^T\|\grad u^\nu\|^2 + 4\nu\sum_k\sum_{m=1}^\infty \beta_d(m+2)\frac{(-1)^{m}(\ell|k|)^{2m}}{2^mm!}\E\fint_0^T|\widehat{\grad u^\nu}(k)|^2\\
         &=: 4\beta_d(2)\varepsilon - 4\beta_d(2)\E D(u^\nu) + \sum_kc(\ell,k)\E\fint_0^T|\widehat{\grad u^\nu}|^2
    \end{align*}
    Note that by construction $\ds\lim_{\ell|k| \to 0}c(\ell,k) = 0$. Moreover, by a similar argument to that done in step 1, one can show that $\ds\lim_{\ell|k| \to \infty}c(\ell,k) = -4\beta_d(2)$

    \textbf{Step 3:} Apply Lemma \ref{lemma: large scale wave-number sum limit} to the result from step 2 to conclude that there exists $\tilde{\ell}_\nu \to \infty$ such that 
    \begin{align*}
         \lim_{\ell_I \to \infty}\limsup_{\nu \to 0}\sup_{\ell \in (\ell_I, \tilde{\ell}_\nu)}\frac{-4\nu}{\ell}(\Gamma_{vel}^\parallel)'(\ell) &= 4\beta_d(2)\varepsilon + \lim_{\ell_I \to \infty}\limsup_{\nu \to 0}\sup_{\ell \in (\ell_I, \tilde{\ell}_\nu)}\sum_kc(\ell,k)\E|\widehat{\grad u^\nu}|^2\\
         &= 4\beta_d(2)\varepsilon - 4\beta_d(2)\E D(u^\nu) - 4\beta_d(2)\big(\varepsilon - \E D(u^\nu) - \varepsilon^* \big)\\
         &= 4\beta_d(2)\varepsilon^*
    \end{align*}
    if and only if there exists $M_\nu \to 0$ such that 
    \[
        \liminf_{\nu \to 0}\nu\sum_{|k| \leq M_\nu}\E|\widehat{\grad u^\nu}|^2 = \varepsilon^*
    \]
    \textbf{Step 4:} Next we use Equation \eqref{eq: S0 limit inverse}, which was proved in Section \ref{sec: proof of S0 inverse}, to conclude that the if and only if established in step 3, also equivalently shows that
    \[
        \frac{2}{\ell^{d+2}}\int_0^\ell r^dS_{vel}(r)\;dr = \frac{2}{\ell^{d+2}}\int_0^\ell r^{d+1}\frac{S_{vel}(r)}{r}\;dr = \frac{8\beta_d(1)}{d+2}\varepsilon^* + o_{\ell \to 0}(1)
    \]

    Finally combine Steps 1-4 with the KHM equation to derive \eqref{eq: S long limit inverse}.

\section{Appendix}\label{sec: appendix energy balance}
In this section we provide a proof of Theorem \ref{thm: energy balance}. This is a simple application of the ideas from both \cite{flandoli1995martingale} and \cite{duchon2000inertial}, however the author could find no explicit mention to this fact so it is included here for reference. \newline

First, the existence of statistically stationary solutions to \eqref{eq: incompressible NSE} in the sense of Definition \ref{def: stationary martingale sol} follows from adapting the argument in \cite{flandoli1995martingale} from a domain with Dirichlet boundary conditions to the torus. The details are omitted as the adjustments are trivial. \newline 

First, we rewrite the identity \eqref{eq: weak energy identity} in differential form:
for all $\psi \in C_c^\infty$ and $t \geq 0$
\[
    \int_{\T^d} \psi \cdot du^\nu\;dx + \int_{\T^d} \psi \cdot (u^\nu \cdot \grad) u^\nu\;dxdt = \nu\int_{\T^d}\grad u^\nu :\grad \psi\;dxdt + \sum_{j}\int_{\T^d}f_j\cdot \psi\;dxdW_t^j \quad\quad \P-a.s. 
\]
Let $\phi \in C_c^\infty$ such that $\phi \geq 0$ and $\int \phi \;dx = 1$. For $\gamma \ll 1$ we define $\phi_\gamma(x) = \gamma^{-d}\phi(x/\gamma)$ and mollify
\[
    u_\gamma^\nu = \phi_\gamma *u^\nu, \quad (u_i^\nu u_j^\nu)_\gamma = \phi_\gamma *(u_i^\nu u_j^\nu), \quad p_\gamma^\nu = \phi_\gamma^\nu *p, \quad (f_j)_\gamma = \phi_\gamma *f_j.
\]
Each of these mollified variables now resides in $C_c^\infty$, and as $u$ solves the Navier-Stokes Equations \eqref{eq: incompressible NSE} it follows that:
\begin{align*}
    du_\gamma^\nu + (\grad \cdot (u^\nu\otimes u^\nu)_\gamma + \grad p_\gamma^\nu)dt &= \nu\Delta u_\gamma^\nu\;dt + \sum_{j} (f_j)_\gamma \;dW_t^j\\
    \grad \cdot u_\gamma^\nu &= 0
\end{align*}

Fix $t$, we apply Ito's lemma, integration by parts, and the independence of the Brownian motions to get
\begin{align*}
    d\int_{\T^d}(u^\nu \cdot u_\gamma^\nu)\;dx &= \int_{\T^d}u_\gamma^\nu \cdot du^\nu\;dx + \int_{\T^d}u^\nu \cdot du_\gamma^\nu\;dx + \int_{\T^d}d[u^\nu, u_\gamma^\nu]\;dx\\
    &= -2\nu\int_{\T^d}\grad u^\nu :\grad u_\gamma^\nu\;dxdt +\int_{\T^d}\Big( (u^\nu \otimes u^\nu): \grad u_\gamma^\nu - u^\nu\cdot \Div (u^\nu \otimes u^\nu)_\gamma\Big)\;dxdt\\
    &\quad + \sum_j\sum_k \int_{\T^d}f_j(f_k)_\gamma d[W_t^j, W_t^k]dx + \sum_{j}\int_{\T^d}\Big(f_j\cdot u_\gamma^\nu + (f_j)_\gamma\cdot u^\nu\Big)\;dxdW_t^j\\ 
    &= -2\nu\int_{\T^d} \grad u^\nu :\grad u_\gamma^\nu \;dxdt + \sum_j \int_{\T^d} f_j(f_j)_\gamma \;dxdt - A_\gamma \;dt\\
    &\quad + \sum_j\int_{\T^d} (f_j\cdot u_\gamma^\nu + (f_j)_\gamma u^\nu)\;dxdW_t
\end{align*}
where 
\[
    A_\gamma(s) = \int_{\T^d} u^\nu(s)\cdot \Div(u^\nu(s) \otimes u^\nu(s))_\gamma\;dx - \int_{\T^d} (u^\nu(s) \otimes u^\nu(s)): \grad u_\gamma^\nu(s)\;dx
\]
Or in integral form:
\begin{align*}
    \int_{\T^d}u^\nu(T) \cdot u_\gamma^\nu(T)\;dx &= \int_{\T^d}u^\nu(0) \cdot u_\gamma^\nu(0)\;dx -2\nu\int_0^T\int_{\T^d}\grad u^\nu(t) :\grad u_\gamma^\nu(t) \;dxdt\\
    &\quad + \sum_j \int_0^T\int_{\T^d}f_j(f_j)_\gamma \;dxds - \int_0^T A_\gamma(t) \;dt\\
    &\quad + \sum_j\int_0^T\int_{\T^d} \big(f_j\cdot u_\gamma^\nu(t) + (f_j)_\gamma\cdot u^\nu(t))\;dxdW_t
\end{align*}
As the $f_j$ are smooth, the forcing terms are Lipschitz. Thus when we take the expected value of both sides we get
\begin{align*}
    \E\int_{\T^d}u^\nu(T) \cdot u_\gamma^\nu(T)\;dx &= \E\int_{\T^d}u^\nu(0) \cdot u_\gamma^\nu(0)\;dx -2\nu\E\int_0^T\int_{\T^d}\grad u^\nu(s) :\grad u_\gamma^\nu(t) \;dxdt\\
    &\quad - \E\int_0^TA_\gamma(t) \; dt + \sum_j\int_0^T\int_{\T^d}f_j(f_j)_\gamma \;dxdt
\end{align*}
As $u^\nu \in L_t^2H_x^1$, it follows from the continuity of translations in $L^2$ that for any fixed time $t \geq 0$
\begin{align*}
    \Big|\E\int_{\T^d}u^\nu(t)\cdot u_\gamma^\nu(t)\;dx - \E\int_{\T^d}u^\nu(t)\cdot u^\nu(t)\;dx\Big| &= \Big|\E\int_{\T^d}\phi_\gamma(y) \int_{\T^d}u^\nu(t,x+y)\cdot u^\nu(t,x) - u^\nu(t,x)\cdot u^\nu(t,x)\;dxdy\Big|\\
    &\leq C\E\int_{\supp \;\phi_\gamma}\|u^\nu(t)\|_{L^2}\Big(\int_{\T^d}|u^\nu(t,x+y) - u^\nu(t,x)|^2\;dx\Big)^{1/2}\;dy\\
    &\to 0
\end{align*}
as $\gamma \to 0$. Similar analysis shows that $\int_0^T\int_{\T^d}f_j(f_j)_\gamma \;dxdt \to \int_0^T\|f_j\|_{L^2}^2\;dt$ and $\E\int_0^T\int_{\T^d}\grad u^\nu :\grad u_\gamma^\nu \;dxdt \to \E\int_0^T\|\grad u^\nu(t)\|_{L^2}^2\;dt$ as $\gamma \to 0$. 
Hence by the stationarity of the law of $u^\nu$:
\begin{equation}\label{eq: most of energy balance}
    \lim_{\gamma \to 0}\E\fint_0^T A_\gamma\;dt = -2\nu\E\fint_0^T\|\grad u^\nu\|_{L^2}^2 + \sum_j \|f_j\|_{L^2}^2
\end{equation}

Next, inspired by \cite{duchon2000inertial} we consider the functional: $D_\gamma(u^\nu) := \frac{1}{4}\fint_0^T\int_{\T^d}\int_{\T^d} \grad \phi_\gamma(y) \cdot \delta_yu^\nu|\delta_y u^\nu|^2\;dydxdt$. Let $T_yg(x) = g(x+y)$. Applying the incompressibility condition, Fubini, and integration by parts we get:
\begin{align*}
    \int_{\T^d}\int_{\T^d} \grad \phi_\gamma(y) \cdot \delta_yu^\nu|\delta_y u^\nu|^2\;dydx &= \int\int \partial_{y_j}\phi_\gamma(y)\Big(T_yu_j^\nu T_yu_k^\nu T_yu_k^\nu - 2T_yu_j^\nu T_yu_k^\nu u_k^\nu\\
    &\quad + T_yu_j^\nu u_k^\nu u_k^\nu - u_j^\nu T_yu_k^\nu T_yu_k^\nu + 2u_j^\nu u_k^\nu T_yu_k^\nu - u_j^\nu u_k^\nu u_k^\nu\Big)\;dydx\\
    &= \int\partial_{y_j}\phi_\gamma(y)\Big(\int u_j^\nu u_k^\nu u_k^\nu\;dx - 2\int T_yu_j^\nu T_yu_k^\nu u_k^\nu\;dx + \int T_yu_j^\nu u_k^\nu u_k^\nu\;dx\\
    &\quad - \int T_{-y}u_j^\nu u_k^\nu u_k^\nu\;dx + 2\int u_j^\nu u_k^\nu T_yu_k^\nu\;dx - \int u_j^\nu u_k^\nu u_k^\nu\;dx\Big)\;dy\\
    &= -2\int\partial_{y_j}\phi_\gamma(y)\Big(\int T_yu_j^\nu T_yu_k^\nu u_k^\nu\;dx - \int u_j^\nu u_k^\nu T_yu_k^\nu\;dx\Big)\;dy\\
    &= -2\int u_j^\nu(z)u_k^\nu(z)\partial_{z_j}\int \phi_\gamma(z-x)u_k^\nu(x)\;dxdz\\
    &\quad + 2\int u_k^\nu(z)\partial_{z_j}\int \phi_\gamma(z-x)u_j^\nu(x)u_k^\nu(x)\;dxdz\\
    &= 2\int u_k^\nu\partial_{j}(u_j^\nu u_k^\nu)_\gamma\;dz - 2\int u_j^\nu u_k^\nu \partial_{j} (u_k^\nu)_\gamma \;dz\\
    &= 2A_\gamma
\end{align*}
Define 
\[
    D(u^\nu) := \frac{1}{|\T_\lambda^d|}\lim_{\gamma \to 0}D_\gamma(u^\nu) = \frac{1}{2|\T_\lambda^d|}\lim_{\gamma \to 0}\fint_0^T A_\gamma\;dt
\]
Then after combining $D(u^\nu)$ with Equation \eqref{eq: most of energy balance} the desired energy balance appears:
\[
    \nu\E\fint_0^T\|\grad u^\nu\|_\lambda^2 + \E D(u^\nu) = \frac{1}{2}\sum_j\|f_j\|_\lambda^2
\]
\section{Acknowledgements}
The author would like to thank Dr. Jacob Bedrossian, for suggesting the problem addressed in this paper and providing guidance, as well as Dr. Konstantina Trivisa for her support during the drafting of this article. 

\bibliographystyle{phys}
\bibliography{Ref.bib}
\end{document}